\newif\iffinal
\newenvironment{enumeratei}{\begin{enumerate}[\upshape (i)]}{\end{enumerate}}
\newcounter{mycounter}
\numberwithin{equation}{section}
\numberwithin{figure}{section}
\numberwithin{table}{section}
\DeclareMathAlphabet{\mathpzc}{OT1}{pzc}{m}{it}
\newtheorem{theorem}{\bf Theorem}[section]
\newtheorem{corollary}[theorem]{\bf Corollary}
\newtheorem{lemma}[theorem]{\bf Lemma}
\newtheorem{condition}{\bf Condition}[section]
\newtheorem{proposition}[theorem]{\bf Proposition}
\theoremstyle{remark}
\newtheorem{example}{\bf Example}[section]
\newtheorem{remark}{\bf Remark}[section]
\newcounter{constant}
\numberwithin{equation}{section}
\numberwithin{theorem}{section}
\newcommand{\Erdos}{Erd\H{o}s-R\'enyi }
\newcommand{\one} {{\boldsymbol{1}}}
\newcommand{\half}{{\frac{1}{2}}}
\newcommand{\lan}{\langle}
\newcommand{\ran}{\rangle}
\newcommand{\Rd}{{\Rmb^d}}
\newcommand{\intRd}{\int_\Rd}
\newcommand{\Cmb}{{\mathbb{C}}}
\newcommand{\Dmb}{{\mathbb{D}}}
\newcommand{\Nmb}{{\mathbb{N}}}
\newcommand{\Pmb}{{\mathbb{P}}}
\newcommand{\Qmb}{{\mathbb{Q}}}
\newcommand{\Rmb}{{\mathbb{R}}}
\newcommand{\Smb}{{\mathbb{S}}}
\newcommand{\Bmc}{{\mathcal{B}}}
\newcommand{\Cmc}{{\mathcal{C}}}
\newcommand{\Fmc}{{\mathcal{F}}}
\newcommand{\Gmc}{{\mathcal{G}}}
\newcommand{\Lmc}{{\mathcal{L}}}
\newcommand{\Mmc}{{\mathcal{M}}}
\newcommand{\Pmc}{{\mathcal{P}}}
\newcommand{\Rmc}{{\mathcal{R}}}
\newcommand{\Smc}{{\mathcal{S}}}
\newcommand{\Tmc}{{\mathcal{T}}}
\newcommand{\Umc}{{\mathcal{U}}}
\newcommand{\Ebf}{{\mathbf{E}}}
\newcommand{\Ebd}{{\boldsymbol{E}}}
\newcommand{\Kbd}{{\boldsymbol{K}}}
\newcommand{\Nbd}{{\boldsymbol{N}}}
\newcommand{\Nalpha}{{\boldsymbol{N_\alpha}}}
\newcommand{\Ngamma}{{\boldsymbol{N_\gamma}}}
\newcommand{\pbd}{{\boldsymbol{p}}}
\newcommand{\Pbd}{{\boldsymbol{P}}}
\newcommand{\phibd}{{\boldsymbol{\phi}}}
\newcommand{\psibd}{{\boldsymbol{\psi}}}
\newcommand{\bbar}{{\bar{b}}}
\newcommand{\mubar}{{\bar{\mu}}}
\newcommand{\Nbar}{{\bar{N}}}
\newcommand{\omegabar}{{\bar{\omega}}}
\newcommand{\pbar}{{\bar{p}}}
\newcommand{\sigmabar}{{\bar{\sigma}}}
\newcommand{\Tmchat}{{\hat{\Tmc}}}
\newcommand{\etatil}{{\tilde{\eta}}}
\newcommand{\Fmctil}{{\tilde{\Fmc}}}
\newcommand{\Jtil}{{\tilde{J}}}
\newcommand{\mutil}{{\tilde{\mu}}}
\newcommand{\Tmctil}{{\tilde{\Tmc}}}
\newcommand{\EbfP}{{\Ebf_{\Pmb^N}}}
\newcommand{\EbfPV}{{\Ebf_{\Pmb^N,V}}}
\newcommand{\ialpha}{{i_\alpha}}
\newcommand{\igamma}{{i_\gamma}}
\newcommand{\ZiNt}{{Z_t^{i,N}}}
\newcommand{\ZjNt}{{Z_t^{j,N}}}
\newcommand{\ZiNs}{{Z_s^{i,N}}}
\newcommand{\ZjNs}{{Z_s^{j,N}}}
\newcommand{\set}[1]{\left\{#1\right\}}
\newcommand{\erdos}{Erd\H{o}s-R\'enyi }
\begin{document}

\title[WIPS on random graphs]{Weakly interacting particle systems on inhomogeneous random graphs}

\date{}
\subjclass[2010]{Primary: 60C05, 05C80, 60F05, 60K35, 60H30, 60J70. }
\keywords{inhomogeneous random graphs, dynamical random graphs, weakly interacting diffusions, propagation of chaos, central limit theorems, multi-type populations, interacting particle systems. }

\author[Bhamidi]{Shankar Bhamidi$^1$}
\address{$^1$Department of Statistics and Operations Research, 304 Hanes Hall, University of North Carolina, Chapel Hill, NC 27599}
\author[Budhiraja]{Amarjit Budhiraja$^1$}
\author[Wu]{Ruoyu Wu$^2$}
\address{$^2$Division of Applied Mathematics, 182 George Street, Brown University, Providence, RI 02912}
\email{bhamidi@email.unc.edu,  budhiraj@email.unc.edu, ruoyu\_wu@brown.edu}
\maketitle
\begin{abstract}
	We consider weakly interacting diffusions on time varying random graphs. The system consists of a large number of nodes in which the state of each node is governed by
	a diffusion process that is influenced by the neighboring nodes. The collection of neighbors of a given node changes dynamically over time and is determined through a time evolving random graph process.
	A law of large numbers and a propagation of chaos result is established for a multi-type population setting where at each instant the interaction between nodes is given by an inhomogeneous random graph
	which may change over time. This result covers the setting in which the edge probabilities between any two nodes is allowed to decay to $0$ as the size of the system grows. A central limit theorem is established for the single-type population case under stronger conditions on the edge probability function.
\end{abstract}


\section{Introduction} \label{sec:intro}

In this work we study some asymptotic results for large particle systems given as weakly interacting diffusion processes on time varying inhomogeneous random graphs. 
The model is described in terms of two types of stochastic dynamical systems, one that describes the evolution of 
the  graph that governs the interaction between nodes of the system ({\em dynamics of the network}) and the other that describes the evolution of the states of all the nodes in the system ({\em dynamics on the network}). 
We consider a setting where the interaction between the nodes is {\em weak} in that the `strength' of the interaction between a node and its neighbor is inversely proportional to the total number of neighbors of that node. 
Such stochastic systems arise in many different areas, such as social networks (e.g.\ in the study of gossip algorithms \cite{bertsekas1989parallel,shah2009gossip}), biological systems (e.g.\ swarming and flocking models, see \cite{bertozzi1} and references therein), neurosciences (e.g.\ in modeling of networks of spiking neurons, see \cite{BaladronFaugeras2012,Bossy2015clarification} and references therein), and mathematical finance (e.g.\ in modeling correlations between default probabilities of multiple firms \cite{CvitanicMaZhang2012law}). 

The case where the interaction graph is complete, i.e.\ each node interacts with every other node, is classical and dates back to works of Boltzmann, Vlasov, McKean and others (see \cite{Sznitman1991,Kolokoltsov2010} and references therein). Original motivation for the study of such systems came from Statistical Physics but, as noted above, in recent years similar models have arisen in many different application areas, ranging from mathematical finance and chemical and biological systems to communication networks and social sciences (see \cite{BudhirajaDupuisFischerRamanan2015limits} for an extensive list of references).
The asymptotic picture in the setting of a complete graph is well resolved and many different results have been established, including laws of large numbers (LLN), propagation of chaos (POC) properties, and central limit theorems (CLT), see e.g.\ \cite{McKean1966class,McKean1967propagation,BraunHepp1977vlasov,Dawson1983critical,Tanaka1984limit,Oelschlager1984martingale,Sznitman1984,Sznitman1991,GrahamMeleard1997,ShigaTanaka1985,Meleard1998}.
 A number of variations have also been studied.
For example, in~\cite{KurtzXiong1999,KurtzXiong2004,BudhirajaSaha2014,BudhirajaWu2015} a setting where a common noise process  influences the dynamics of every particle is considered and limit theorems of the above form are established.
For a setting with $K$-different subpopulations within each of which particle evolution is exchangeable, LLN and POC have been studied in~\cite{BaladronFaugeras2012}, and a corresponding CLT has been established in~\cite{BudhirajaWu2015}.
Mean field results for heterogeneous populations have also been  studied in ~\cite{Collet2014macroscopic,ChongKluppelberg2015partial}. All of these  papers consider  complete interaction graphs where every vertex influences every other vertex (albeit in a `weak' fashion).

\subsection{Informal overview of our contributions and proof techniques}
\label{sec:inf-over}

The goal of the current work is to develop an analogous limit theory when the interaction graph is not complete and is possibly time varying. 
The two main results of this work are Theorem \ref{thm:NpN_rate}
and Theorem \ref{thm:CLT}.  Theorem \ref{thm:NpN_rate} and its corollaries (Corollary \ref{cor:poc} and Corollary \ref{cor:LLN}) give a law of large numbers result and a propagation of chaos property whereas Theorem
\ref{thm:CLT} proves a central limit theorem.

 In the area of interacting particle systems there is a large amount of work for many different settings with dynamics on random graphs, for example, the voter model or the contact process \cite{aldous2013interacting,liggett2012interacting,durrett1995ten,durrett-book}. There is much less work for the class of models considered in this paper. The closest in spirit to the work here is \cite{bertozzi1,bertozzi2} where the authors consider an \erdos random graph $\Gmc_N(p) = \set{\xi_{ij}: 1\le i< j \le N }$, where $\xi_{ij}=1$ if there is an edge between vertices $i$ and $j$, and $0$ otherwise.  Using the interaction structure generated by this graph these papers consider a  family of coupled ordinary differential equations (ODE),
\[\frac{d z_i}{dt} = \frac{1}{N} \sum_{j=1}^N \xi_{ij} F(|z_i - z_j|) \frac{z_i - z_j}{|z_i - z_j|}, \qquad i=1,\ldots, N. \]
Here $F(\cdot)$ is a suitable repulsive-attractive force that captures the phenomenon that particles are attracted to each other unless they get too close, in which case  they are repelled.
The aim of these works is  to understand the consensus behavior of the associated coupled system of ODE as $N\to\infty$. The model considered in the current work is different in several aspects: (1) The random graph is allowed to change over time in quite a general fashion; (2) the population can be multi-type, namely the associated random graph can be `inhomogeneous'; (3) Edge probabilities can change with system size and are even allowed to decay to zero (for the LLN result) as system size increases; (4) the dynamics of the node states can have external noise and be described through stochastic differential equations (SDE) rather than ODE. 
Another recent work that is close to ours is \cite{Delattre2016} where interacting diffusions on static graphs are studied and quenched results on propagation of chaos are established.

For the law of large numbers result, we consider a random graph model that is a time evolving version of the inhomogeneous random graph models studied by Bollobas, Janson and Riordan in \cite{bollobas-riordan-janson}.
Roughly speaking, each node can be of $K$ possible types and at any time instant edges between two nodes form independently with probabilities depending only on the types of the two nodes.
These probabilities may change over time and are allowed to decay with $N$ (see Condition \ref{cond:cond1}). The evolution of the node states is described through a collection of weakly interacting stochastic
differential equations(SDE) such that the interaction of a particular particle with its neighbors is given through the coefficients in the SDE for that particle that depend on the states of all its neighbors through functions that depend only on the node-type (see equation \eqref{eq:ZiNt}
for a precise description of the evolution). The proof of the law of large numbers relies on certain coupling arguments along with exchangeability properties of nodes of each type, and concentration inequalities for various functionals of the random graph.

Our second main result proves a central limit theorem for scaled functionals of the empirical measures of the particle states. For simplicity, here we consider the single type setting, namely $K=1$. 
The proof of CLT relies on a change of measure technique using Girsanov's theorem, which goes back to \cite{Sznitman1984,ShigaTanaka1985}.
This technique reduces the problem to a setting with i.i.d.\ particles and edges, while the price to pay is that one must carefully analyze the asymptotic behavior of the Radon-Nikodym derivative $J^N(T)$, which is given in terms of quantities (see $J^{N,1}(T)$ and $J^{N,2}(T)$ in \eqref{eq:JN1} and \eqref{eq:JN2} for the martingale and the quadratic variation part, respectively) involving all particles and edges.
For the asymptotics of the quadratic variation part $J^{N,2}(T)$, one can modify the system to reduce the analysis to the setting of a complete graph by carefully studying the effect of the time evolving random interaction graph (Lemma \ref{lem:Ttil}) and bounding the corresponding error (Lemma \ref{lem:tiljn2}), and then apply classical limit theorems for symmetric statistics \cite{Dynkin1983}.
For the martingale part $J^{N,1}(T)$, however, one cannot easily modify the system to replace the random graph  by a complete graph while keeping the  particle interaction to be  i.i.d.
Due to the interaction being governed by a time evolving random graph, neither can one  apply classical techniques from \cite{Sznitman1984,ShigaTanaka1985}, since the entire collection of particles and edges cannot be viewed as a collection of i.i.d.\ particle-edge combinations. 
The asymptotic analysis of $J^{N,1}(T)$ presents one of the main challenges in the proof.
For this, we first show in Lemma \ref{lem:JN1_JN1_tilde} that $J^{N,1}(T)$ is asymptotically close to $\Jtil^{N,1}(T)$ defined in \eqref{eq:JN1til}, which is written in a form similar to an incomplete U-statistic and that has `lesser dependence' on the random interaction graph. The significance of this result is that incomplete U-statistics, first introduced by Blom \cite{Blom1976some}, have been shown to be asymptotically normal by  Janson \cite{Janson1984}, under suitable conditions. We extend Janson's limit result to the setting of stochastic processes where the incompleteness shows up in the integrands of certain stochastic integrals  and combine it with classical limit theorems for symmetric statistics, to obtain  in Lemma \ref{lem:key}, a key characterization of the asymptotic distribution of $\Jtil^{N,1}(T)$ as the sum of a normal random variable and a multiple Wiener integral (see Section \ref{sec:asymp_symmetric_statistics} for precise definitions) with certain independence properties.
These properties allow us to obtain joint asymptotic behavior of $J^{N,1}(T)$ and $J^{N,2}(T)$, which completes the analysis of the Radon-Nikodym derivative.

The proof of the central limit theorem requires a stronger condition on the edge probabilities (see Condition \ref{cond:cond2}) than that needed for the law of large numbers. In particular, here we are unable to treat the case where the edge probabilities decay to $0$ with $N$.
One interesting aspect of the proof is the role of the non-degeneracy assumption on edge probabilities in Condition \ref{cond:cond2}. For simplicity, consider the setting where the random graph is static, given at time instant $0$ according to an \Erdos random graph with edge probabilities $p_N$ and suppose that $p_N \to p>0$ as $N \to \infty$. Then, as seen in Theorem \ref{thm:CLT}, the variance of the limiting Gaussian random field does not depend on $p$. One may conjecture that because of this fact one may be able to relax the condition $p>0$ and allow $p_N$ to converge to $0$ at an appropriate rate. However, as noted earlier, a crucial ingredient in our proof is the study of the asymptotic behavior of the Radon-Nikodym derivative $J^N(T)$. The limit of this random variable is described (see Proposition \ref{prop:key_joint_cvg})
in terms of a Normal random variable $Z$ (along with a multiple Wiener integral of order $2$) with mean $b(p)$ and variance $\sigma^2(p)$ where both $b(p)$ and $\sigma^2(p)$ approach $\infty$ as $p \to 0$.
The arguments of Section \ref{sec:complete_pf_CLT} show that the distribution of the limiting Gaussian random field depend on the quantity $\Ebf e^Z = e^{b(p)+ \sigma^2(p)/2}$ and one finds that although both
$b(p)$ and $\sigma^2(p)$ diverge, the quantity $b(p)+ \sigma^2(p)/2 =0$ for each $p>0$. This is the key observation in the proof and the reason the limit random field does not depend on $p$ . However, when $p_N \to 0$, a similar analysis of the asymptotics of the Radon-Nikodym derivative $J^N(T)$ cannot be carried out and even the tightness of this term is unclear. Proving a suitable fluctuations result in this regime (i.e.\ when $p_N \to 0$ at a suitable rate) is an interesting and challenging open problem.


\subsection{Organization}
\label{sec:org}

The paper is organized as follows. We conclude this Section by outlining the notation used in the rest of the paper. 
Our model of weakly interacting multi-type diffusions on random graphs is introduced in Section \ref{sec:mod}. 
In Section \ref{sec:laws-large-results} two basic conditions (Conditions \ref{cond:coefficients} and \ref{cond:cond1}) on coefficients in the model and sparsity of the interaction graph are stated, under which a law of large numbers and propagation of chaos property are established in Theorem \ref{thm:NpN_rate} and its corollaries.
Next in Section \ref{sec:fluctuations} we present a central limit theorem (Theorem \ref{thm:CLT}) in the single-type setting  under a stronger condition (Condition \ref{cond:cond2}) on sparsity of the interaction graph.
The rest of this paper gives proofs of Theorems \ref{thm:NpN_rate} and \ref{thm:CLT}.
We start in Section \ref{sec:pre_results} with some preliminary concentration results on the degree distribution.
Theorem \ref{thm:NpN_rate} is proved in Section \ref{sec:pf_NpN_rate}.
In Section \ref{sec:pf_CLT} we give the proof of Theorem \ref{thm:CLT}.
The proofs of several technical Lemmas and auxiliary results are given in the Appendix.

\subsection{Notation}
\label{sec:not}
The following notation will be used in the sequel. 
For a Polish space $(\Smb, d(\cdot,\cdot))$, denote the corresponding Borel $\sigma$-field by $\Bmc(\Smb)$.
For a signed measure $\mu$ on $\Smb$ and $\mu$-integrable function $f \colon \Smb \to \Rmb$, let $\langle f,\mu \rangle \doteq \int f \, d\mu$.
Denote by $\Pmc(\Smb)$ (resp.\ $\Mmc(\Smb)$) the space of probability measures (resp.\ sub-probability measures) on $\Smb$, equipped with the topology of weak convergence.
A convenient metric for this topology is the bounded-Lipschitz metric $d_{BL}$, defined as
\begin{equation*}
d_{BL}(\nu_1,\nu_2) \doteq \sup_{\|f\|_{BL} \le 1} | \langle f, \nu_1 - \nu_2 \rangle |, \quad \nu_1, \nu_2 \in \Mmc(\Smb),
\end{equation*}
where $\|\cdot\|_{BL}$ is the bounded Lipschitz norm, i.e.\ for $f \colon \Smb \to \Rmb$,
\begin{equation*}
	\|f\|_{BL} \doteq \max \{\|f\|_\infty, \|f\|_L\}, \quad \|f\|_\infty \doteq \sup_{x \in \Smb} |f(x)|, \quad \|f\|_L \doteq \sup_{x \ne y} \frac{|f(x)-f(y)|}{d(x,y)}.
\end{equation*}
Denote by $\Cmb_b(\Smb)$ the space of real bounded and continuous functions.
For a measure $\nu$ on $\Smb$ and a Hilbert space $H$, let $L^2(\Smb,\nu,H)$ denote the space of measurable functions $f \colon \Smb \to H$ such that $\int_\Smb \| f(x) \|^2_H \, \nu(dx) < \infty$, where $\| \cdot \|_H$ is the norm on $H$.
When $H = \Rmb$, we write $L^2(\Smb,\nu)$ or simply $L^2(\nu)$ if  $\Smb$ is clear from context.

Fix $T < \infty$. 
All stochastic processes will be considered over the time horizon $[0,T]$. 
We will use the notations $\{X_t\}$ and $\{X(t)\}$ interchangeably for stochastic processes.
For a Polish space $\Smb$, denote by $\Cmb([0,T]:\Smb)$ (resp.\ $\Dmb([0,T]:\Smb)$) the space of continuous functions (resp.\ right continuous functions with left limits) from $[0,T]$ to $\Smb$, endowed with the uniform topology (resp.\ Skorokhod topology). 
For $d \in \Nmb$, let $\Cmc_d \doteq \Cmb([0,T]:\Rmb^d)$ and $\|f\|_{*,t} \doteq \sup_{0 \le s \le t} \|f(s)\|$ for $f \in \Cmc_d$, $t \in [0,T]$.
We say a collection $\{ X^m \}$ of $\Smb$-valued random variables is tight if the distributions of $X^m$ are tight in $\Pmc(\Smb)$.
We use the symbol `$\Rightarrow$' to denote convergence in distribution.
The distribution of an $\Smb$-valued random variable $X$ will be denoted as $\Lmc(X)$. 
Expected value under a probability distribution $\Pmb$ will be denoted as $\Ebf_{\Pmb}$ but when clear from the context,
$\Pmb$ will be suppressed from the notation.


We will usually denote by $\kappa, \kappa_1, \kappa_2, \dotsc$, the constants that appear in various estimates within a proof. 
The value of these constants may change from one proof to another.

\section{Model}
\label{sec:mod}

We now describe the precise model that will be studied in this work. Law of large numbers will be established in the general $K$-type setting described below whereas  for the central limit theorem in Section \ref{sec:fluctuations} we will consider for simplicity the case $K=1$.
 
\subsection{Random graph}
Our random graph model is a (possibly time evolving) version in the class of inhomogeneous random graph models studied by Bollobas, Janson and Riordan in \cite{bollobas-riordan-janson}. 
We start with $N$ vertices represented by the vertex set $\Nbd \doteq \set{1,2,\ldots, N}$. 
Assume that each vertex can be one of $K$ possible types (sometimes called populations) labelled according to $\Kbd \doteq \set{1,2,\ldots, K}$. We will require, for every $\alpha \in  \Kbd$, the number of type $\alpha$ vertices, denoted by $N_{\alpha}$, to approach infinity and the ratio $N_{\alpha}/N$ to converge to  a  positive value as $N\to \infty$.
It will be notationally convenient to assume that $N_{\alpha}$ are nondecreasing in $N$ so that by reindexing if needed the type $\alpha$ vertices in the $N$-th model can be described through a membership map $\pbd: \Nmb \to \Kbd$ such that
type $\alpha$-vertices in the $N$-th model are given as $\Nalpha \doteq \{i \in \Nbd: \pbd(i)=\alpha\}$.
In this work we take the membership map to be fixed over time. It may be viewed as a given deterministic function or determined through a sample realization (which is then fixed throughout) of $N$ i.i.d.\ 
$\Kbd$-valued random variables
$\{p(i)\}_{i=1}^N$ distributed according to some probability distribution ${\boldsymbol\pi} = (\pi(\alpha), \alpha \in \Kbd)$ on $\Kbd$.  The evolution of the random graph is given in terms of a stochastic process
$\set{\xi_{ij}^N(t): t\in [0,T], 1\leq i\leq j\leq N}$ given on some filtered probability space $(\Omega,\Fmc,\Pbd,\{\Fmc_t\})$. We assume that 
%
%
%
%
%
%
$\xi_{ii}^N(t) \equiv 1$ for all $t\in [0,T]$ while $\{\xi_{ij}^N(t) \equiv \xi_{ji}^N(t): 1 \le i < j \le N\}$ are mutually independent $\{\Fmc_t\}$-adapted RCLL(right continuous with left limits) processes such that
\begin{equation*}
	\Pbd(\xi_{ij}^N(t)=1) = 1 - \Pbd(\xi_{ij}^N(t)=0) = p_{\alpha\gamma,N}(t), \quad \alpha, \gamma \in \Kbd, i \in \Nalpha, j \in \Ngamma, i \ne j,
\end{equation*}
where $p_{\alpha\gamma,N}$ is a $[0,1]$-valued continuous function on $[0,T]$ for every $\alpha, \gamma \in \Kbd$, $N \in \mathbb{N}$.
Note that $p_{\alpha\gamma,N}=p_{\gamma\alpha,N}$ for all $\alpha,\gamma \in \Kbd$.
Given such a stochastic process, for any $t\in [0,T]$  the graph $\Gmc_N(t)$ is formed via the following procedure:  for any unordered pair $i\neq j \in \Nbd$ if $\xi_{ij}^N(t) =1$, this implies at that given time instant $t$, there exists an edge between vertex $i$ and $j$; if $\xi_{ij}^N(t) =0$ then no edge exists at time $t$. The self-loop edges $\xi_{ii}^N(\cdot) \equiv 1$ are used just as a simplification in the interacting particle system process described in the next subsection. 
Ignoring these self-loop edges, the random graph $\Gmc_N(t)$ belongs to the class of inhomogeneous random graph models analyzed in \cite{bollobas-riordan-janson} wherein edges are formed randomly between vertices and connection probabilities depend only on the `type' of the vertex. For later use, let $N_{i,\gamma}(t) \doteq \sum_{j \in \Ngamma} \xi_{ij}^N(t)$ for $i \in \Nbd$, $\gamma \in \Kbd$. 
Thus $N_{i,\gamma}(t)$ represents the number of $\gamma$-th type neighbors of vertex $i$ at time $t$. 
Note that $N_{i,\alpha}(t) \ge 1$ for all $t \in [0,T]$ and $i \in \Nalpha$. 
Define,
\begin{equation}
	\label{eqn:def-pbar}
	\pbar_N\doteq \inf_{t \in [0,T]} \min_{\alpha,\gamma \in \Kbd} p_{\alpha\gamma,N}(t).
\end{equation}
The following are two natural families for the edge process. 

\begin{example}[{\bf Static networks}] \label{eg:eg1}
	Let $\{\xi_{ij}^N(t)\}$ be unchanging over time, namely $\xi_{ij}^N(t) \equiv \xi_{ij}^N(0)$.
	In this case 
	\begin{equation*}
		\pbar_N = \min_{\alpha,\gamma \in \Kbd} p_{\alpha\gamma,N}(0), \quad N_{i,\gamma}(t) \equiv N_{i,\gamma}(0)
	\end{equation*} 
	for all $t \in [0,T]$, $i,j \in \Nbd$ and $\gamma \in \Kbd$. 
\end{example}

\begin{example}[{\bf Markovian  edge formation}] \label{eg:eg2}
For each $N\geq 1$ let $\{\xi^N_{ij}(0)\}_{1\le i < j \le N}$ be mutually independent $\{0,1\}$-valued random variables
such that $P( \xi_{ij}^N(0) = 1 ) = p_{\alpha\gamma,N}(0)$ for $i\neq j\in \Nbd$ with $i\in \Nalpha$ and $j\in \Ngamma$
and $\alpha, \gamma \in \Kbd$. Fix positive $\{\lambda_{\alpha\gamma,N} = \lambda_{\gamma\alpha,N}\}_{\alpha,\gamma \in \Kbd}$ and $\{\mu_{\alpha\gamma,N} = \mu_{\gamma\alpha,N}\}_{\alpha,\gamma \in \Kbd}$.	
For any two vertices $i\neq j\in \Nbd$ with $i\in \Nalpha$ and $j\in \Ngamma$, let 
$\set{\xi_{ij}^N(t): t\geq 0}$ be a $\{0,1\}$-valued Markov process with rate matrix,
	\begin{equation*}
		\Gamma_{\alpha\gamma,N} \doteq 
		\begin{bmatrix}
			-\lambda_{\alpha\gamma,N}	& \lambda_{\alpha\gamma,N} \\
			\mu_{\alpha\gamma,N}		& -\mu_{\alpha\gamma,N}
		\end{bmatrix}.
	\end{equation*}
	We assume that the evolutions of Markov chains for different edges are independent. In this setting
	\begin{equation*}
		p_{\alpha\gamma,N}(t) = p_{\alpha\gamma,N}(0) e^{-(\lambda_{\alpha\gamma,N}+\mu_{\alpha\gamma,N})t} + \frac{\lambda_{\alpha\gamma,N}}{\lambda_{\alpha\gamma,N} + \mu_{\alpha\gamma,N}} \left( 1-e^{-(\lambda_{\alpha\gamma,N}+\mu_{\alpha\gamma,N})t} \right)
	\end{equation*}
	and hence 
	\begin{equation*}
		\pushQED{\qed} \pbar_N = \min_{\alpha,\gamma \in \Kbd} \min \{ p_{\alpha\gamma,N}(0), p_{\alpha\gamma,N}(T) \} \ge \min_{\alpha,\gamma \in \Kbd} \min \Big\{ p_{\alpha\gamma,N}(0), \frac{\lambda_{\alpha\gamma,N}}{\lambda_{\alpha\gamma,N} + \mu_{\alpha\gamma,N}} \Big\}. 
	\end{equation*}
\end{example}
One can also allow non-Markovian edge formation processes where the holding times have general probability distributions that
satisfy appropriate conditions.

\subsection{Interacting particle system}
The main object of interest in this paper is a collection of $\Rd$-valued diffusion processes $\{Z^{1,N},\dotsc,Z^{N,N}\}$, representing trajectories of $N$  particles of $K$ types and that interact through the evolving graphical structure represented by $\set{\Gmc_N(t):t\geq 0}$. 
The dynamics is given in terms of a collection of stochastic differential equations (SDE) driven by mutually independent Brownian motions with each particle's initial condition governed independently by a probability law that depends only on its type.
The interaction between particles occurs through the coefficients of the SDE in that for the $i$-th particle $Z^{i,N}$, with $\pbd(i)=\alpha$, the coefficients depend on not only the $i$-th particle's current state, but also $K$ empirical measures of its neighbors corresponding to $K$ types. 
A precise formulation is as follows. Recall the filtered probability space $(\Omega,\Fmc,\Pbd,\{\Fmc_t\})$ on which the edge processes $\set{\xi_{ij}^N: i\leq j \in \Nbd}$ are given. 
We suppose that on this space we are also given an infinite collection of standard $d$-dimensional $\{\Fmc_t\}$-Brownian motions $\{W^i : i \in \Nbd\}$ and $\Fmc_0$-measurable $\Rd$-valued random variables $\{X^i_0 : i \in \Nbd \}$, with $\Lmc(X^i_0) = \mu^\alpha_0$ for $i \in \Nalpha, \alpha \in \Kbd$, such that  $\{W^i, X^i_0, \xi_{ij}^N: i\leq j \in \Nbd\}$ are mutually independent.

Recall that $N_{i,\gamma}(t) \doteq \sum_{j \in \Nbd_\gamma} \xi_{ij}^N(t)$ denotes the number of $\gamma$-th type particles that interact with $i$-th particle at time instant $t$. For $\alpha \in \Kbd$ and $i \in \Nalpha$ consider the collection of stochastic differential equations given by,
\begin{align}
	\ZiNt & = X^i_0 + \sum_{\gamma=1}^K \int_0^t b_{\alpha\gamma}(\ZiNs,\mu^{i,\gamma,N}_s) \, ds + \sum_{\gamma=1}^K \int_0^t \sigma_{\alpha\gamma}(\ZiNs,\mu^{i,\gamma,N}_s) \, dW^i_s, \label{eq:ZiNt} \\
	\mu^{i,\gamma,N}_t & = \frac{1}{N_{i,\gamma}(t)} \sum_{j \in \Ngamma} \xi_{ij}^N(t) \delta_{\ZjNt} \one_{\{N_{i,\gamma}(t) > 0\}},  
\end{align}
where for $\alpha,\gamma \in \Kbd$, $x \in \Rmb^d$ and $\theta \in \Mmc(\Rd)$, 
\begin{equation*}
	b_{\alpha\gamma}(x,\theta) \doteq \intRd \bbar_{\alpha\gamma}(x,y) \, \theta(dy), \quad \sigma_{\alpha\gamma}(x,\theta) \doteq \intRd \sigmabar_{\alpha\gamma}(x,y) \, \theta(dy)
\end{equation*} 
for suitable functions $\bbar_{\alpha\gamma} \colon \Rd \times \Rd \to \Rd$ and $\sigmabar_{\alpha\gamma} \colon \Rd \times \Rd \to \Rmb^{d \times d}$.
Under Condition \ref{cond:coefficients} one can easily establish  existence and uniqueness of pathwise solutions to the above system of stochastic differential equations  \cite{Sznitman1991}.

We can now summarize the main contributions of this work.
\begin{enumeratei}
	\item In Theorem \ref{thm:NpN_rate} and its corollaries we show that with suitable assumptions on coefficients (Condition \ref{cond:coefficients}) and a sparsity condition on the interaction graph that is formulated in terms of the decay rate of $\pbar_N$ (Condition \ref{cond:cond1}), a law of large numbers and propagation of chaos result hold.
\item In Section \ref{sec:fluctuations} we study the fluctuations of $\{Z^{i,N}\}$ from its law of large numbers limit by establishing a central limit theorem.
For simplicity, we study the single-type setting, i.e.\ $K=1$. 
Specifically, let
\begin{equation*}
	\eta^N(\phi) \doteq \frac{1}{\sqrt{N}} \sum_{i=1}^N \phi(Z^{i,N}), \quad \phi \in L^2_c(\Cmc_d,\mu),
\end{equation*}
where $L^2_c(\Cmc_d,\mu)$ is a family of functions on the path space that are suitably centered and have appropriate integrability properties (see Section \ref{sec:CLT} for definitions).
We show in Theorem \ref{thm:CLT} that under Condition \ref{cond:coefficients} and a stronger assumption on edge probability $p_N$ (Condition \ref{cond:cond2}) the family $\{\eta^N(\phi) : \phi \in L^2_c(\Cmc_d,\mu) \}$ converges weakly to a mean $0$ Gaussian field $\{\eta(\phi) : \phi \in L^2_c(\Cmc_d,\mu) \}$ in the sense of convergence of finite dimensional distributions.
\end{enumeratei}

\section{Laws of large numbers}
\label{sec:laws-large-results}

We now describe our main results. This section deals with the law of large numbers while the next section concerns central limit theorems. 
Recall the collection of  SDE $\{Z_t^{i, N}: i\in \Nbd\}$ describing the evolution of $N$ interacting particles, defined in Section \ref{sec:mod} via \eqref{eq:ZiNt}. Along with this system, we will also consider a related infinite system of equations for $\Rd$-valued nonlinear diffusions $X^i$, $i \in \Nmb$, given on $(\Omega,\Fmc,\Pbd,\{\Fmc_t\})$.
Let $\Nmb_\alpha \doteq \{i \in \Nmb: \pbd(i) = \alpha \}$.
For $\alpha \in \Kbd$ and $i \in \Nmb_\alpha$,
\begin{equation} \label{eq:Xit}
	X^i_t = X^i_0 + \sum_{\gamma=1}^K \int_0^t b_{\alpha\gamma}(X^i_s,\mu^\gamma_s) \, ds + \sum_{\gamma=1}^K \int_0^t \sigma_{\alpha\gamma}(X^i_s,\mu^\gamma_s) \, dW^i_s, \quad \mu^\gamma_t = \Lmc(X^j_t), \quad i \in \Nmb_\alpha, j \in \Nmb_\gamma.
\end{equation}
The existence and uniqueness of pathwise solutions of \eqref{eq:ZiNt} and \eqref{eq:Xit} can be shown under the following conditions on the coefficients (cf.\ \cite{Sznitman1991}).

\begin{condition} \label{cond:coefficients}
	There exists some $L \in (0,\infty)$ such that for all $\alpha,\gamma \in \Kbd$, $\|\bbar_{\alpha\gamma}\|_{BL} \le L$ and $\|\sigmabar_{\alpha\gamma}\|_{BL} \le L$.
\end{condition}

We start with the following moment estimate. 
The proof is given in Section~\ref{sec:pf_NpN_rate}. For later use define $\Nbar \doteq \min_{\alpha \in \Kbd} N_\alpha$.

\begin{theorem} \label{thm:NpN_rate}
	Suppose Condition~\ref{cond:coefficients} holds.
	Then
	\begin{equation} \label{eq:NpN rate}
		\sup_{N \ge 1} \max_{i \in \Nbd} \sqrt{\Nbar \pbar_N} \Ebd \|Z^{i,N} - X^i\|_{*,T}^2 < \infty.
	\end{equation}
\end{theorem}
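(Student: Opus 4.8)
The estimate \eqref{eq:NpN rate} compares the finite particle system \eqref{eq:ZiNt} with the nonlinear system \eqref{eq:Xit} driven by the same Brownian motions and initial data. The natural approach is a Gronwall argument on $e_i(t) \doteq \Ebd \|Z^{i,N}-X^i\|_{*,t}^2$, so the plan is to subtract \eqref{eq:Xit} from \eqref{eq:ZiNt}, apply the $\Cmc_d$-norm, and use the Burkholder--Davis--Gundy inequality on the martingale part together with Cauchy--Schwarz on the drift part. For each type $\gamma$, the difference of the coefficients splits as
\begin{equation*}
	b_{\alpha\gamma}(\ZiNs,\mu^{i,\gamma,N}_s) - b_{\alpha\gamma}(X^i_s,\mu^\gamma_s)
	= \big[ b_{\alpha\gamma}(\ZiNs,\mu^{i,\gamma,N}_s) - b_{\alpha\gamma}(X^i_s,\mu^{i,\gamma,N}_s) \big] + \big[ b_{\alpha\gamma}(X^i_s,\mu^{i,\gamma,N}_s) - b_{\alpha\gamma}(X^i_s,\mu^\gamma_s) \big],
\end{equation*}
and similarly for $\sigma_{\alpha\gamma}$. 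Condition~\ref{cond:coefficients} controls the first bracket by $L\|\ZiNs - X^i_s\| \le L\|Z^{i,N}-X^i\|_{*,s}$. The second bracket is bounded, using $\|\bbar_{\alpha\gamma}\|_{BL}\le L$, by $L\, d_{BL}(\mu^{i,\gamma,N}_s,\mu^\gamma_s)$, which is the heart of the matter.

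The key step is therefore to bound $\Ebd\, d_{BL}(\mu^{i,\gamma,N}_s,\mu^\gamma_s)^2$. I would introduce the intermediate empirical measure built from the nonlinear limit particles restricted to the neighborhood of $i$, namely $\nu^{i,\gamma,N}_s \doteq \frac{1}{N_{i,\gamma}(s)}\sum_{j\in\Ngamma}\xi^N_{ij}(s)\,\delta_{X^j_s}\one_{\{N_{i,\gamma}(s)>0\}}$, and split $d_{BL}(\mu^{i,\gamma,N}_s,\mu^\gamma_s) \le d_{BL}(\mu^{i,\gamma,N}_s,\nu^{i,\gamma,N}_s) + d_{BL}(\nu^{i,\gamma,N}_s,\mu^\gamma_s)$. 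The first term is at most $\frac{1}{N_{i,\gamma}(s)}\sum_{j}\xi^N_{ij}(s)\|Z^{j,N}-X^j\|_{*,s}$, which after taking expectations and using exchangeability of particles of each type feeds back $\max_j e_j(s)$ into the Gronwall loop. For the second term, $\nu^{i,\gamma,N}_s$ is (conditionally on the graph) an average of $N_{i,\gamma}(s)$ i.i.d.\ samples from $\mu^\gamma_s$, so on the event $\{N_{i,\gamma}(s) = m\}$ one has $\Ebd\, d_{BL}(\nu^{i,\gamma,N}_s,\mu^\gamma_s)^2 \lesssim 1/m$ by a standard empirical-measure fluctuation bound; the remaining task is to show $\Ebd[1/N_{i,\gamma}(s)\,;\,N_{i,\gamma}(s)>0] \lesssim 1/(N_\gamma p_{\alpha\gamma,N}(s)) \le 1/(\Nbar\,\pbar_N)$ together with $\Pbd(N_{i,\gamma}(s)=0)$ being negligible. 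This is precisely where the concentration results on the degree distribution from Section~\ref{sec:pre_results} are invoked: they give that $N_{i,\gamma}(s)$ is, with overwhelming probability, comparable to its mean $\Theta(N_\gamma p_{\alpha\gamma,N}(s))$, so the harmonic-mean-type quantity $\Ebd[1/N_{i,\gamma}(s)]$ is $O(1/(\Nbar\pbar_N))$.

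Assembling these pieces, for $t\in[0,T]$ one obtains
\begin{equation*}
	\max_{i\in\Nbd} e_i(t) \le \kappa_1 \int_0^t \max_{i\in\Nbd} e_i(s)\, ds + \frac{\kappa_2}{\Nbar\,\pbar_N},
\end{equation*}
where $\kappa_1,\kappa_2$ depend only on $L$, $T$, $K$ and $d$ (and the initial conditions agree, so there is no term at $t=0$). Gronwall's inequality then yields $\max_{i\in\Nbd} e_i(T) \le \kappa_2 e^{\kappa_1 T}/(\Nbar\,\pbar_N)$, which is exactly \eqref{eq:NpN rate}. One preliminary point to dispatch is an a priori second-moment bound $\sup_N\max_i \Ebd\|Z^{i,N}\|_{*,T}^2 <\infty$ and the analogue for $X^i$, needed to justify the BDG/Cauchy--Schwarz steps and to control the contribution of the rare event $\{N_{i,\gamma}(s)=0\}$; these follow from Condition~\ref{cond:coefficients} and the linear growth it implies, by a routine Gronwall argument applied to each system separately.

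\textbf{Main obstacle.} The delicate point is the control of $\Ebd[1/N_{i,\gamma}(s)\,\one_{\{N_{i,\gamma}(s)>0\}}]$ uniformly in $N$, $i$, $s$, and its matching with the rate $\sqrt{\Nbar\pbar_N}$: since $\pbar_N$ may decay to zero, one needs quantitative lower-tail bounds on the binomial-type random variable $N_{i,\gamma}(s)$ that remain effective when the mean degree $N_\gamma p_{\alpha\gamma,N}(s)$ grows only slowly — this is the role of Condition~\ref{cond:cond1} and the preliminary concentration estimates, and everything else is a by-now-standard coupling-and-Gronwall scheme.
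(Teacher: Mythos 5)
Your overall architecture — subtract the two SDEs, apply Doob/BDG and Cauchy--Schwarz, feed the result into Gronwall, introduce the intermediate measure $\nu^{i,\gamma,N}_s \doteq \frac{1}{N_{i,\gamma}(s)}\sum_{j\in\Ngamma}\xi^N_{ij}(s)\delta_{X^j_s}\one_{\{N_{i,\gamma}(s)>0\}}$ built from the limit particles, and split the coefficient difference into a ``same measure, different first argument'' piece, a ``neighbor states $Z^j\to X^j$'' piece, and a ``fluctuation of the empirical measure of i.i.d.\ samples'' piece — is the right skeleton and matches the paper's decomposition into the four terms $\Tmc^{\gamma,N,1},\dots,\Tmc^{\gamma,N,4}$ (the paper's $\Tmc^{\gamma,N,4}$ handles the $\{N_{i,\gamma}(s)=0\}$ normalization defect, which you correctly flag as a ``rare event'' to be dispatched). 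The exchangeability idea for feeding $\max_j\Ebf\|Z^{j,N}-X^j\|_{*,s}^2$ back into the loop is also sound, though the paper must Cauchy--Schwarz the cross term $\Ebf\big[(\text{random weight}-1)\,\|Z^{\igamma,N}_s-X^\igamma_s\|^2\big]$ against a fourth-moment bound, which is what produces the $(\Nbar\pbar_N)^{-1/2}$ (not $(\Nbar\pbar_N)^{-1}$) rate in the theorem statement; your sketch implicitly assumes the stronger $(\Nbar\pbar_N)^{-1}$, which only holds for constant $\sigma$ (Remark~\ref{rem:remsec6}).

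The genuine gap is your claim that ``on the event $\{N_{i,\gamma}(s)=m\}$ one has $\Ebf\, d_{BL}(\nu^{i,\gamma,N}_s,\mu^\gamma_s)^2\lesssim 1/m$ by a standard empirical-measure fluctuation bound.'' This is false in dimension $d\ge 3$: the bounded-Lipschitz (Fortet--Mourier) distance between an empirical measure of $m$ i.i.d.\ samples on $\Rmb^d$ and the true law decays like $m^{-1/d}$, not $m^{-1/2}$, so $\Ebf\, d_{BL}^2 \asymp m^{-2/d}$, and your argument would only yield a rate $(\Nbar\pbar_N)^{-2/d}$, which does not give \eqref{eq:NpN rate}. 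The paper sidesteps this by never passing through $d_{BL}$ for this term. Because $b_{\alpha\gamma}(x,\theta)=\int \bbar_{\alpha\gamma}(x,y)\,\theta(dy)$, the dependence on the measure is through integration against the single fixed (bounded Lipschitz) kernel $\bbar_{\alpha\gamma}(X^{\ialpha}_s,\cdot)$, not through a supremum over all $1$-Lipschitz test functions. One can therefore expand
\[
\Ebf\Big\| \sum_{j\in\Ngamma}\frac{\xi_{\ialpha j}(s)}{N_{\ialpha,\gamma}(s)}\one_{\{N_{\ialpha,\gamma}(s)>0\}}\bigl(\bbar_{\alpha\gamma}(X^{\ialpha}_s,X^j_s)-b_{\alpha\gamma}(X^{\ialpha}_s,\mu^\gamma_s)\bigr)\Big\|^2
\]
directly: conditioning on the graph, the summands are (conditionally) centered, the cross terms vanish by independence of $\{X^j\}$ from the graph and from each other, and the diagonal sum is controlled by $\Ebf\big[\one_{\{N_{\ialpha,\gamma}(s)>0\}}/N_{\ialpha,\gamma}(s)\big]\lesssim 1/(N_\gamma p_{\alpha\gamma,N}(s))$ via Lemma~\ref{lem:prep_1} (a clean harmonic-moment identity for binomials, not requiring the degree concentration of Lemma~\ref{lem:prep_4}). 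This gives the dimension-free $O(1/(\Nbar\pbar_N))$ rate for this piece and is the key structural observation you need to insert. A second, smaller point: Theorem~\ref{thm:NpN_rate} is stated under Condition~\ref{cond:coefficients} alone; Condition~\ref{cond:cond1} is not needed for the uniform bound in \eqref{eq:NpN rate}, only for the subsequent conclusions (propagation of chaos, LLN), so invoking it here is not quite right.
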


\begin{remark}
	From the above theorem it is clear that $\Ebd \|Z^{i,N} - X^i\|_{*,T}^2$ is of order at most $(\Nbar \pbar_N)^{-1/2}$.
	If one assumes diffusion coefficients $\{\sigma_{\alpha\gamma}\}_{\alpha,\gamma \in \Kbd}$ to be constants, then the following result with a better order can be obtained:
	\begin{equation}
		\label{eq:NpN_better}
		\sup_{N \ge 1} \max_{i \in \Nbd} \sqrt{\Nbar \pbar_N} \Ebd \|Z^{i,N} - X^i\|_{*,T} < \infty.
	\end{equation}	
	See Remark \ref{rem:remsec6} for comments on this point.
\end{remark}

We will make the following assumption on $\pbar_N$. 
\begin{condition} \label{cond:cond1}
	$\Nbar \pbar_N \to \infty$ as $N \to \infty$.
\end{condition}

Theorem~\ref{thm:NpN_rate} together with a standard argument (cf.\ \cite{Sznitman1991}) implies that, under Conditions~\ref{cond:coefficients} and~\ref{cond:cond1}, the following propagation of chaos result holds.
We omit the proof.

\begin{corollary} \label{cor:poc}
	Suppose Conditions~\ref{cond:coefficients} and~\ref{cond:cond1} hold.
	Then for any $n$-tuple  $(i_1^N,\dotsc,i_n^N) \in \Nbd^n$ with $i_j^N \neq i_k^N$ whenever $j\neq k$, and
	$\pbd(i_j^N) = \alpha_j$, $j = 1,\dotsc,n$, 
	\begin{equation*}
		\Lmc(\{Z^{i_1^N,N},\dotsc,Z^{i_n^N,N}\}) \to \mu^{\alpha_1} \otimes \dotsb \otimes \mu^{\alpha_n}
	\end{equation*}
	in $\Pmc(\Cmc_d^n)$ as $N \to \infty$, where $\mu^\alpha \doteq \Lmc(X^i) \in \Pmc(\Cmc_d)$ for $\alpha \in \Kbd$ and $i \in \Nmb_\alpha$.
\end{corollary}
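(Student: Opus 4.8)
The plan is to read this off from Theorem~\ref{thm:NpN_rate} by a coupling argument; it is the classical propagation-of-chaos deduction of Sznitman and contains no essential difficulty, all of the work being in the moment bound \eqref{eq:NpN rate}. First recall the construction of the nonlinear system \eqref{eq:Xit}: since $\mu^\gamma_t=\Lmc(X^j_t)$ is a deterministic flow of measures, each $X^i$ is a measurable functional of the single pair $(W^i,X^i_0)$, and this functional depends on $i$ only through the type $\pbd(i)$. As the pairs $\{(W^i,X^i_0)\}_{i\in\Nmb}$ are mutually independent, the processes $\{X^i\}_{i\in\Nmb}$ are therefore mutually independent with $\Lmc(X^i)=\mu^\alpha$ whenever $i\in\Nmb_\alpha$. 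Consequently, for the given $n$-tuple of distinct indices $(i_1^N,\dotsc,i_n^N)$ with $\pbd(i_k^N)=\alpha_k$, one has the exact identity $\Lmc(\{X^{i_1^N},\dotsc,X^{i_n^N}\})=\mu^{\alpha_1}\otimes\dotsb\otimes\mu^{\alpha_n}$, valid for every $N$.

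It then suffices to show that $\Lmc(\{Z^{i_1^N,N},\dotsc,Z^{i_n^N,N}\})$ and $\Lmc(\{X^{i_1^N},\dotsc,X^{i_n^N}\})$ approach each other in $\Pmc(\Cmc_d^n)$. Metrize weak convergence there by the bounded-Lipschitz metric $d_{BL}$ associated with the product metric $\rho\big((f_k)_{k},(g_k)_{k}\big)=\max_{1\le k\le n}\|f_k-g_k\|_{*,T}$ on $\Cmc_d^n$. For $F\colon\Cmc_d^n\to\Rmb$ with $\|F\|_{BL}\le 1$, the fact that in \eqref{eq:ZiNt} and \eqref{eq:Xit} the processes $Z^{i,N}$ and $X^i$ are driven by the same $(W^i,X^i_0)$ yields
\[
\big|\Ebd F(Z^{i_1^N,N},\dotsc,Z^{i_n^N,N})-\Ebd F(X^{i_1^N},\dotsc,X^{i_n^N})\big|\le\sum_{k=1}^n\Ebd\|Z^{i_k^N,N}-X^{i_k^N}\|_{*,T}\le n\max_{i\in\Nbd}\big(\Ebd\|Z^{i,N}-X^i\|_{*,T}^2\big)^{1/2},
\]
where the last inequality is Jensen's. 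By Theorem~\ref{thm:NpN_rate} the right-hand side is at most $\kappa\,n\,(\Nbar\pbar_N)^{-1/4}$, which under Condition~\ref{cond:cond1} tends to $0$ as $N\to\infty$. Taking the supremum over all such $F$ and using the identity of the previous paragraph gives $d_{BL}\big(\Lmc(\{Z^{i_1^N,N},\dotsc,Z^{i_n^N,N}\}),\,\mu^{\alpha_1}\otimes\dotsb\otimes\mu^{\alpha_n}\big)\to 0$, which is the assertion.

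The only points worth a remark are: (i) that the nonlinear processes indexed by distinct vertices of given types are genuinely independent, so the limiting product measure is attained exactly by the coupled reference system and no separate limiting argument is needed there; and (ii) that $L^2$-closeness of the coupled trajectories upgrades to weak closeness of the laws through any metric dominated by an $L^1$-transportation cost, such as $d_{BL}$. Both are standard, and so the only substantive input is Theorem~\ref{thm:NpN_rate}; there is no genuine obstacle in the present deduction, and one in fact obtains the quantitative rate $d_{BL}(\,\cdot\,,\mu^{\alpha_1}\otimes\dotsb\otimes\mu^{\alpha_n})\le\kappa\,n\,(\Nbar\pbar_N)^{-1/4}$ for the finite-dimensional distributions.
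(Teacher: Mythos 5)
Your proof is correct and is precisely the standard Sznitman-type argument that the paper cites (and omits): you note that the reference system $\{X^i\}$ is i.i.d.\ across indices (given the type), so its finite-dimensional law is exactly the product $\mu^{\alpha_1}\otimes\dotsb\otimes\mu^{\alpha_n}$, and then the pathwise coupling via the common $(W^i,X^i_0)$ together with the moment bound \eqref{eq:NpN rate} and Jensen's inequality gives $d_{BL}$-convergence at rate $(\Nbar\pbar_N)^{-1/4}$. This matches the route the paper indicates, and the quantitative rate you extract is a correct bonus.
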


Using above results and an argument similar to~\cite{Sznitman1991} one can further show the following law of large numbers result. 
Proof is included in Appendix~\ref{sec:pf_Cor_LLN} for completeness.

\begin{corollary} \label{cor:LLN}
	Suppose Conditions~\ref{cond:coefficients} and~\ref{cond:cond1} hold.
	Then \\
	(a) For each $\alpha \in \Kbd$, as $N \to \infty$, $$\mu^{\alpha,N} \doteq \frac{1}{N_\alpha} \sum_{i \in \Nalpha} \delta_{Z^{i,N}} \Rightarrow \mu^\alpha.$$
	(b) Suppose in addition that we are in the setting of Example~\ref{eg:eg1}, namely $\xi_{ij}^N(t) \equiv \xi_{ij}^N(0)$ for all $t \ge 0$ and $i,j \in \Nbd$.
	Then for each $\alpha, \gamma \in \Kbd$ and $i \in \Nmb_\alpha$, as $N \to \infty$, $$\mu^{i,\gamma,N} \doteq \frac{1}{N_{i,\gamma}(0)} \sum_{j \in \Ngamma} \xi_{ij}^N(0) \delta_{Z^{j,N}} \one_{\{N_{i,\gamma}(0) > 0\}} \Rightarrow \mu^\gamma.$$
\end{corollary}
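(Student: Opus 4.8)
Both $\mu^{\alpha,N}$ and $\mu^{i,\gamma,N}$ are random elements of $\Mmc(\Cmc_d)$ and the claimed limits $\mu^\alpha,\mu^\gamma$ are deterministic, so it suffices to prove $d_{BL}(\mu^{\alpha,N},\mu^\alpha)\to0$ and $d_{BL}(\mu^{i,\gamma,N},\mu^\gamma)\to0$ in probability; equivalently, for every $f$ with $\|f\|_{BL}\le1$ (or even for each member of a fixed countable convergence-determining family) that $\langle f,\mu^{\alpha,N}\rangle\to\langle f,\mu^\alpha\rangle$, resp.\ $\langle f,\mu^{i,\gamma,N}\rangle\to\langle f,\mu^\gamma\rangle$, in probability. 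In both cases the strategy is to insert a ``decoupled'' measure built from the nonlinear diffusions $\{X^i\}$ and use the triangle inequality for $d_{BL}$.

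\textbf{Part (a).} Set $\mubar^{\alpha,N}\doteq\frac1{N_\alpha}\sum_{i\in\Nalpha}\delta_{X^i}$. Since $\{X^i:i\in\Nmb_\alpha\}$ are i.i.d.\ with law $\mu^\alpha\in\Pmc(\Cmc_d)$, the law of large numbers for empirical measures of an i.i.d.\ sample (Varadarajan's theorem) gives $d_{BL}(\mubar^{\alpha,N},\mu^\alpha)\to0$ a.s. On the other hand, for $\|f\|_{BL}\le1$ we have $|f(Z^{i,N})-f(X^i)|\le\|Z^{i,N}-X^i\|_{*,T}\wedge2$, hence $d_{BL}(\mu^{\alpha,N},\mubar^{\alpha,N})\le\frac1{N_\alpha}\sum_{i\in\Nalpha}(\|Z^{i,N}-X^i\|_{*,T}\wedge2)$, and by Theorem~\ref{thm:NpN_rate} and the Cauchy--Schwarz inequality
\[
\Ebd\, d_{BL}(\mu^{\alpha,N},\mubar^{\alpha,N})\le\max_{i\in\Nbd}\big(\Ebd\|Z^{i,N}-X^i\|_{*,T}^2\big)^{1/2}\le\frac{\kappa}{(\Nbar\pbar_N)^{1/4}}\to0
\]
under Condition~\ref{cond:cond1}. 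Combining the two displays yields $d_{BL}(\mu^{\alpha,N},\mu^\alpha)\to0$ in probability, i.e.\ $\mu^{\alpha,N}\Rightarrow\mu^\alpha$. (Alternatively, since the particles are exchangeable within each type, $\Ebd\langle f,\mu^{\alpha,N}\rangle=\Ebd f(Z^{i_0,N})$ and $\Ebd\langle f,\mu^{\alpha,N}\rangle^2=\tfrac1{N_\alpha}\Ebd f(Z^{i_0,N})^2+\tfrac{N_\alpha-1}{N_\alpha}\Ebd[f(Z^{i_0,N})f(Z^{j_0,N})]$ for distinct $i_0,j_0\in\Nalpha$, and Corollary~\ref{cor:poc} with $n=1,2$ forces these to converge to $\langle f,\mu^\alpha\rangle$ and $\langle f,\mu^\alpha\rangle^2$, so $\mathrm{Var}\,\langle f,\mu^{\alpha,N}\rangle\to0$.)

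\textbf{Part (b).} Fix $i\in\Nbd$, put $\alpha=\pbd(i)$, fix $\gamma\in\Kbd$, and write $p_N\doteq p_{\alpha\gamma,N}(0)\ge\pbar_N$, so $N_\gamma p_N\ge\Nbar\pbar_N\to\infty$. Introduce $\mubar^{i,\gamma,N}\doteq\frac1{N_{i,\gamma}(0)}\sum_{j\in\Ngamma}\xi_{ij}^N(0)\,\delta_{X^j}\,\one_{\{N_{i,\gamma}(0)>0\}}$. \emph{Step 1:} $\mubar^{i,\gamma,N}\Rightarrow\mu^\gamma$. The degree concentration estimates of Section~\ref{sec:pre_results}, applied with $N_\gamma p_N\to\infty$, give $N_{i,\gamma}(0)/(N_\gamma p_N)\to1$, hence $N_{i,\gamma}(0)\to\infty$, in probability; conditioning on the edge variables at time $0$, the family $\{X^j:j\in\Ngamma\}$ is i.i.d.\ $\mu^\gamma$ and independent of that $\sigma$-field, so for $\|f\|_{BL}\le1$ the conditional mean of $\langle f,\mubar^{i,\gamma,N}\rangle$ is $\langle f,\mu^\gamma\rangle$ and the conditional variance is $O(1/N_{i,\gamma}(0))$, whence $\langle f,\mubar^{i,\gamma,N}\rangle\to\langle f,\mu^\gamma\rangle$ in probability. \emph{Step 2:} $d_{BL}(\mu^{i,\gamma,N},\mubar^{i,\gamma,N})\to0$ in probability. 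For $\|f\|_{BL}\le1$,
\[
|\langle f,\mu^{i,\gamma,N}-\mubar^{i,\gamma,N}\rangle|\le\frac1{N_{i,\gamma}(0)}\sum_{j\in\Ngamma}\xi_{ij}^N(0)\big(\|Z^{j,N}-X^j\|_{*,T}\wedge2\big)\,\one_{\{N_{i,\gamma}(0)>0\}},
\]
and on the event $G_N\doteq\{N_{i,\gamma}(0)\ge\tfrac12 N_\gamma p_N\}$ (probability $\to1$ by Step 1) the right side is at most $\frac2{N_\gamma p_N}\sum_{j\in\Ngamma}\xi_{ij}^N(0)(\|Z^{j,N}-X^j\|_{*,T}\wedge2)$, whose expectation we bound using Theorem~\ref{thm:NpN_rate} after exploiting that the single edge $\xi_{ij}^N(0)$ and the coupled trajectory $Z^{j,N}$ are only weakly dependent (conditioning on $\xi_{ij}^N(0)$ perturbs the neighborhood empirical measures driving $Z^{j,N}$ by $O(1/N_{j,\cdot}(0))$), so that $\Ebd[\xi_{ij}^N(0)(\|Z^{j,N}-X^j\|_{*,T}\wedge2)]$ is of order $p_N\,\Ebd\|Z^{j,N}-X^j\|_{*,T}$, and hence the whole expectation is $O((\Nbar\pbar_N)^{-1/4})\to0$. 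With $\Pbd(G_N^c)\to0$ and $d_{BL}\le2$ this gives Step 2, and combining the two steps yields $\mu^{i,\gamma,N}\Rightarrow\mu^\gamma$.

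\textbf{Main obstacle.} Part (a) is routine once Theorem~\ref{thm:NpN_rate} and Varadarajan's theorem (or the pair estimate of Corollary~\ref{cor:poc}) are in hand. The real difficulty is Step 2 of part (b): controlling $\frac1{N_{i,\gamma}(0)}\sum_{j\in\Ngamma}\xi_{ij}^N(0)\|Z^{j,N}-X^j\|_{*,T}$ with only the decay afforded by Condition~\ref{cond:cond1}. The crude bound $\le\frac2{N_\gamma\pbar_N}\sum_{j\in\Ngamma}\|Z^{j,N}-X^j\|_{*,T}$ has expectation $O(\pbar_N^{-1}(\Nbar\pbar_N)^{-1/4})$, which is fine when $\pbar_N$ is bounded away from $0$ but need not vanish otherwise; to recover the conclusion under Condition~\ref{cond:cond1} alone one must use the asymptotic independence of the edge $\xi_{ij}^N(0)$ from $\|Z^{j,N}-X^j\|_{*,T}$ so that the factor $N_\gamma p_N$ coming from the sum over $j$ cancels the random normalization $N_{i,\gamma}(0)\sim N_\gamma p_N$, leaving only $\Ebd\|Z^{j,N}-X^j\|_{*,T}\to0$; establishing this decoupling quantitatively is the technical crux.
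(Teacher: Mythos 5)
Your part (a) is correct (both your triangle-inequality route through the i.i.d.\ empirical measure and your parenthetical second-moment argument, the latter being exactly the paper's proof), and Step 1 of part (b) is fine. The gap is in Step 2 of part (b), and you essentially concede it yourself: everything hinges on the unproved claim that $\Ebd\bigl[\xi_{ij}^N(0)\,(\|Z^{j,N}-X^j\|_{*,T}\wedge 2)\bigr]$ is of order $p_N\,\Ebd\|Z^{j,N}-X^j\|_{*,T}$. The edge $\xi_{ij}^N(0)$ is genuinely not independent of $Z^{j,N}$ (it enters the neighborhood empirical measures driving $Z^{j,N}$), and your proposed justification --- that conditioning on a single edge perturbs those measures only by $O(1/N_{j,\cdot}(0))$ --- is a stability statement for the whole coupled particle system under a one-edge perturbation; it is not a consequence of anything proved in the paper and would itself require a separate Gronwall-type coupling estimate, since toggling $\xi_{ij}$ changes $Z^{j,N}$, which feeds back into all other particles. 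Without it, the elementary bounds available are the crude one you dismiss, or Cauchy--Schwarz, $\Ebd[\xi_{ij}(\cdots)]\le p_N^{1/2}\bigl(\Ebd\|Z^{j,N}-X^j\|_{*,T}^2\bigr)^{1/2}$, which after normalizing by $N_\gamma p_N$ leaves $O\bigl(\pbar_N^{-1/2}(\Nbar\pbar_N)^{-1/4}\bigr)$; both lose a power of $\pbar_N$ and fail under Condition \ref{cond:cond1} alone (e.g.\ $\pbar_N\sim N^{-1/2}$, $\Nbar\sim N$).

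The paper closes exactly this point without any decoupling, by exchangeability (the same device as in \eqref{eq:TN2_gamma}--\eqref{eq:TN2_temp}): for $\gamma\ne\alpha$ the joint law of $(\xi_{ij}(0),N_{i,\gamma}(0),Z^{j,N},X^j)$ is the same for all $j\in\Ngamma$ and coincides with that of $(\xi_{k\igamma}(0),N_{k,\gamma}(0),Z^{\igamma,N},X^{\igamma})$ for every $k\in\Nalpha$, whence
\[
\sum_{j\in\Ngamma}\Ebd\Bigl[\tfrac{\xi_{ij}(0)}{N_{i,\gamma}(0)}\one_{\{N_{i,\gamma}(0)>0\}}\|Z^{j,N}-X^j\|_{*,T}\Bigr]
=\Ebd\Bigl[\Bigl(\sum_{k\in\Nalpha}\tfrac{N_\gamma\xi_{k\igamma}(0)}{N_\alpha N_{k,\gamma}(0)}\one_{\{N_{k,\gamma}(0)>0\}}\Bigr)\|Z^{\igamma,N}-X^{\igamma}\|_{*,T}\Bigr],
\]
and the random weight concentrates at $1$ by Lemma \ref{lem:prep_2}; writing it as $1+(\cdots-1)$ and using Cauchy--Schwarz with the moment bound \eqref{eq:ZiXi_bd} and Theorem \ref{thm:NpN_rate} yields a bound $\kappa\bigl((\Nbar\pbar_N)^{-1}+e^{-\Nbar\pbar_N}\bigr)^{1/2}+\Ebd\|Z^{\igamma,N}-X^{\igamma}\|_{*,T}\to 0$ under Condition \ref{cond:cond1} (the case $\gamma=\alpha$ is analogous, as in \eqref{eq:TN2_alpha}). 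So your outline identifies the right intermediate measures, but the quantitative decoupling you flag as ``the technical crux'' is precisely the missing step; either supply the exchangeability identity above or actually prove your one-edge perturbation claim.
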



\section{Fluctuations and central limit theorems} \label{sec:fluctuations}

Next we will study the fluctuations of empirical measures about the law of large numbers limit.
For simplicity, we consider the {\em single-type} setting, i.e.\ $K=1$, and assume constant diffusion coefficients, i.e.\ $\sigma_{\alpha\gamma} \equiv I_d$, the $d$-dimensional identity matrix.
Consequently, we will write $\mu^N$, $\mu^{i,N}$, $\mu$, $N_i(t)$, $p_N(t)$, $b$ and $\bbar$ instead of $\mu^{\alpha,N}$, $\mu^{i,\gamma,N}$, $\mu^\alpha$, $N_{i,\gamma}(t)$, $p_{\alpha\gamma,N}(t)$, $b_{\alpha\gamma}$ and $\bbar_{\alpha\gamma}$.
Also, to simplify the notation, we will abbreviate $\xi_{ij}^N(t)$ and $\xi_{ij}^N$ as $\xi_{ij}(t)$ and $\xi_{ij}$ in the rest of the paper.


\subsection{Canonical processes} \label{sec:canonical_processes}

We first introduce the following canonical spaces and stochastic processes.
Let $\Omega_d \doteq \Cmc_d \times \Cmc_d$, $\Omega_e \doteq \Dmb([0,T]:\{0,1\})$ and $\Omega_N \doteq \Omega_d^N \times \Omega_e^{N \times N}$. 
Denote by $\nu \in \Pmc(\Omega_d)$ the common law of $(W^i, X^i)$ where $i \in \Nbd$ and $X^i$ is given by \eqref{eq:Xit}
(under the setting of this section).
Also denote by $\nu_{e,N} \in [\Pmc(\Omega_e)]^{N \times N}$ the law of the random adjacency matrix process $\{\xi_{ij}(t) : i,j \in \Nbd, t \in [0,T] \}$.
Define for $N \in \Nmb$ the probability measure $\Pmb^N$ on $\Omega_N$ as
\begin{equation*}
	\Pmb^{N} \doteq \Lmc \left((W^1, X^1), (W^2, X^2), \dotsc,(W^N, X^N), \{\xi_{ij} : i,j \in \Nbd \} \right) \equiv \nu^{\otimes N} \otimes \nu_{e,N}.
\end{equation*}
For $\omega = (\omega_1, \omega_2, \dotsc, \omega_N, \omegabar) \in \Omega_N$ with $\omegabar = (\omegabar_{ij})_{1 \le i,j \le N}$, let $V^i(\omega) \doteq \omega_i, i \in \Nbd$ and abusing notation,
\begin{equation*}
	V^i \doteq (W^i, X^i), \quad \xi_{ij}(\omega) \doteq \omegabar_{ij}, \quad i,j \in \Nbd.
\end{equation*}
Also define the canonical processes $V_* \doteq (W_*, X_*)$ on $\Omega_d$ as
\begin{equation*}
	V_*(\omega) \doteq (W_*(\omega), X_*(\omega)) \doteq (\omega_1, \omega_2), \quad \omega = (\omega_1, \omega_2) \in \Omega_d.
\end{equation*}


\subsection{Some integral operators} \label{sec:some_integral_operators}

We will need the following functions for stating our central limit theorem. Recall that $\mu = \nu_{(2)}$ denotes the law of $X^i$. Let $\mu_t$ be the marginal of $\mu$ at instant $t$, namely, $\mu_t \doteq \Lmc(X^i_t)$.
Define for $t \in [0,T]$, function $\bbar_t$ from $\Rmb^d \times \Rmb^d$ to $\Rmb^d$ as
\begin{equation} \label{eq:bbar t}
	\bbar_t(x,y) \doteq \bbar(x,y) - \int_{\Rmb^d} \bbar(x,z)\mu_t(dz), \quad (x,y) \in \Rmb^d \times \Rmb^d.
\end{equation}
Define function $h$ from $\Omega_d \times \Omega_d$ to $\Rmb$ ($\nu \otimes \nu$ a.s.) as
\begin{equation} \label{eq:h}
	h(\omega,\omega') \doteq \int_0^T \bbar_t (X_{*,t}(\omega),X_{*,t}(\omega')) \cdot dW_{*,t}(\omega), \quad (\omega, \omega') \in \Omega_d \times \Omega_d.
\end{equation}
Now consider the Hilbert space $L^2(\Omega_d, \nu)$.
Define integral operators $A$ on $L^2(\Omega_d, \nu)$ as
\begin{equation} \label{eq:A} 
	A f(\omega) \doteq \int_{\Omega_d} h(\omega',\omega) f(\omega') \, \nu(d\omega'), \quad f \in L^2(\Omega_d, \nu), \: \omega \in \Omega_d.
\end{equation}
Denote by $I$ the identity operator on $L^2(\Omega_d, \nu)$.
For $t \in [0,T]$, let
\begin{equation} \label{eq:lambdas}
	\lambda_t \doteq \int_{\Rd \times \Rd} \| \bbar_t(x,y) \|^2 \, \mu_t(dx) \, \mu_t(dy).
\end{equation} 
The following lemma is taken from \cite{BudhirajaWu2015} (see Lemma $3.1$ therein).

\begin{lemma} \label{lem:trace}
	(a) $\textnormal{Trace}(AA^*) = \int_{\Omega_d^2} h^2(\omega,\omega') \, \nu(d\omega) \, \nu(d\omega') = \int_0^T \lambda_t \, dt$.
	(b) \textnormal{Trace}$(A^n) = 0$ for all $n \ge 2$.
	(c) $I - A$ is invertible.
\end{lemma}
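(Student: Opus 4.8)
The plan is to handle the three assertions in order, using throughout that the integral kernel of $A$ is $k(\omega,\omega')\doteq h(\omega',\omega)$, which by Condition~\ref{cond:coefficients} (boundedness of $\bbar$, hence $\|\bbar_t\|_\infty\le 2\|\bbar\|_\infty$ and $\lambda_t\le 4\|\bbar\|_\infty^2$) lies in $L^2(\Omega_d\times\Omega_d,\nu\otimes\nu)$; consequently $A$ is Hilbert--Schmidt, and in particular compact. For part (a) one then writes $\mathrm{Trace}(AA^*)=\|A\|_{\mathrm{HS}}^2=\int_{\Omega_d^2}|k(\omega,\omega')|^2\,\nu(d\omega)\,\nu(d\omega')=\int_{\Omega_d^2}h^2(\omega,\omega')\,\nu(d\omega)\,\nu(d\omega')$. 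To evaluate the last integral, fix $\omega'$: under $\nu$, the map $\omega\mapsto h(\omega,\omega')$ is the terminal value of the It\^o integral $\int_0^\cdot\bbar_t(X_{*,t},X_{*,t}(\omega'))\cdot dW_{*,t}$, whose integrand is adapted (for $\omega'$ fixed) and bounded, so It\^o's isometry gives $\int_{\Omega_d}h^2(\omega,\omega')\,\nu(d\omega)=\int_0^T\int_{\Rd}\|\bbar_t(x,X_{*,t}(\omega'))\|^2\,\mu_t(dx)\,dt$; integrating in $\omega'$ and invoking Fubini together with the independence of the two $\nu$-copies yields $\int_0^T\int_{\Rd\times\Rd}\|\bbar_t(x,y)\|^2\,\mu_t(dx)\,\mu_t(dy)\,dt=\int_0^T\lambda_t\,dt<\infty$.

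For part (b), since $A$ is Hilbert--Schmidt, $A^n$ is trace class for $n\ge2$ and its trace equals the iterated kernel integral
\[
	\mathrm{Trace}(A^n)=\int_{\Omega_d^n}\prod_{j=0}^{n-1}h(\omega_{j+1},\omega_j)\;\prod_{j=0}^{n-1}\nu(d\omega_j),\qquad \omega_n\doteq\omega_0 .
\]
I would realize this on $(\Omega_d^n,\nu^{\otimes n})$ endowed with the filtration generated jointly by the $n$ independent Brownian motions $W_*(\omega_0),\dots,W_*(\omega_{n-1})$. Under this filtration each factor $h(\omega_{j+1},\omega_j)=\int_0^T\bbar_t(X_{*,t}(\omega_{j+1}),X_{*,t}(\omega_j))\cdot dW_{*,t}(\omega_{j+1})$ is the time-$T$ value of a martingale $M^{(j)}$ with bounded adapted integrand, driven by $W_*(\omega_{j+1})$. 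The key observation is that as $j$ runs over $0,\dots,n-1$ the drivers $W_*(\omega_1),\dots,W_*(\omega_{n-1}),W_*(\omega_0)$ form a permutation of the $n$ independent Brownian motions, so distinct factors are driven by independent Brownian motions and hence $\langle M^{(i)},M^{(j)}\rangle\equiv0$ for $i\neq j$. Applying It\^o's formula to the polynomial $(x_0,\dots,x_{n-1})\mapsto\prod_j x_j$ and taking expectations, the stochastic-integral terms drop out (boundedness of $\bbar$ and the Burkholder--Davis--Gundy inequality supply the integrability that justifies this) and every cross-bracket term vanishes identically; since $M^{(j)}_0=0$ this gives $\mathrm{Trace}(A^n)=\Ebf\big[\prod_j M^{(j)}_T\big]=0$.

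For part (c), $A$ is compact with eigenvalues $\{\lambda_j\}$ (counted with algebraic multiplicity) satisfying $\sum_j|\lambda_j|^2\le\|A\|_{\mathrm{HS}}^2<\infty$, and by Lidskii's theorem together with part (b) we get $\sum_j\lambda_j^n=\mathrm{Trace}(A^n)=0$ for every $n\ge2$. An elementary argument---specialize to $n=2m$, divide by $\rho^{2m}$ with $\rho\doteq\sup_j|\lambda_j|$, use that only finitely many $\lambda_j$ attain the modulus $\rho$ so that by dominated convergence $\sum_{k}\zeta_k^{\,m}\to0$ with $\zeta_k\doteq(\lambda_{j_k}/\rho)^2$ on the unit circle, and Ces\`aro-average the squared modulus to reach $0=\#\{(k,l):\zeta_k=\zeta_l\}\ge1$---shows this is impossible unless $\rho=0$ (equivalently, one may note $\log\det_2(I-A)=-\sum_{n\ge2}\mathrm{Trace}(A^n)/n=0$, so the Carleman determinant of $I-A$ is nonzero). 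Either way $A$ is quasinilpotent, $\sum_{n\ge0}A^n$ converges in operator norm to $(I-A)^{-1}$, and $I-A$ is invertible.

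The step I expect to be the main obstacle is part (b): everything hinges on recognizing that the cyclic structure of the trace forces the $n$ martingale factors to be driven by $n$ \emph{distinct}, hence independent, Brownian motions, which is precisely what makes all the iterated cross-bracket terms in the It\^o product rule disappear. The remaining work there is to set up the correct product filtration and to verify enough integrability (again from the boundedness of $\bbar$) that the product-rule-plus-expectation computation is legitimate; given standard facts about Hilbert--Schmidt and compact operators, parts (a) and (c) are then essentially routine.
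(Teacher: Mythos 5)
The paper does not actually prove this lemma here; it cites it directly from~\cite{BudhirajaWu2015} (Lemma~3.1 there), so there is no in-text proof to compare against. Your argument is correct and is the standard one in this lineage (the It\^o-isometry computation for~(a), the iterated-kernel/martingale-product-rule argument with vanishing cross-brackets for~(b), and the quasinilpotency/Carleman-determinant conclusion for~(c) all trace back to Shiga--Tanaka and Sznitman and are what is done in the cited reference); the only place you should be slightly more explicit is the identification $\textnormal{Trace}(A^n)=\int k_n(\omega,\omega)\,\nu(d\omega)$, which is legitimate because $A^n$ with $n\ge 2$ is a product of Hilbert--Schmidt operators and the trace of a product of two Hilbert--Schmidt operators is the $L^2(\nu\otimes\nu)$ pairing of their kernels, and the verification that the local martingales arising in the It\^o product formula are genuine martingales (immediate from boundedness of $\bbar$, as you note).
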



\subsection{Central limit theorem} \label{sec:CLT}

For the central limit theorem we need the following strengthened version of Condition \ref{cond:cond1}.

\begin{condition} \label{cond:cond2}
	$K=1$, $\sigma_{\alpha\gamma} \equiv I_d$, where $I_d$ is the $d$-dimensional identity matrix. The collection
	$\{ p_N(\cdot)  \}_{N \in \Nmb}$ is pre-compact in $\Cmb([0,T]:\Rmb)$ and $\liminf_{N \to \infty} \pbar_N > 0$.	
\end{condition}
We can now present the central limit theorem.
Let $L^2_c(\Cmc_d,\mu)$ be the space of all functions $\phi \in L^2(\Cmc_d,\mu)$ such that $\lan \phi, \mu \ran = 0$.
For $\phi \in L^2_c(\Cmc_d,\mu)$, let $\eta^N (\phi) \doteq \frac{1}{\sqrt{N}} \sum_{i=1}^N \phi (Z^{i,N})$ and $\phibd \doteq \phi(X_*) \in L^2(\Omega_d, \nu)$.

\begin{theorem} \label{thm:CLT}
	Suppose Conditions \ref{cond:coefficients} and \ref{cond:cond2} hold.
	Then $\{\eta^N (\phi) : \phi \in L^2_c(\Cmc_d,\mu)\}$ converges as $N \to \infty$ to a mean $0$ Gaussian field $\{\eta(\phi) : \phi \in L^2_c(\Cmc_d,\mu)\}$ in the sense of convergence of finite dimensional distributions, where for $\phi,\psi \in L^2_c(\Cmc_d,\mu)$,
	\begin{equation*}
		\Ebf [\eta(\phi) \eta(\psi)] = \langle (I-A)^{-1} \phibd, (I-A)^{-1} \psibd \rangle_{L^2(\Omega_d, \nu)}.
	\end{equation*}
\end{theorem}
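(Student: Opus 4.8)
The plan is to use the Girsanov change-of-measure technique going back to \cite{Sznitman1984,ShigaTanaka1985}. Under $\Pmb^N$, the canonical processes $V^i=(W^i,X^i)$ are i.i.d.\ with law $\nu$ (and the edge variables $\xi_{ij}$ are independent of them), so that $\eta^N(\phi)=\frac{1}{\sqrt N}\sum_{i=1}^N\phi(X^i)=\frac{1}{\sqrt N}\sum_{i=1}^N\phibd(V^i)$ is a sum of i.i.d.\ centered square-integrable random variables and hence, by the classical CLT, converges to a Gaussian field with covariance $\langle\phibd,\psibd\rangle_{L^2(\nu)}$. The trajectories $\{Z^{i,N}\}$ solving \eqref{eq:ZiNt} are obtained from $\{X^i\}$ by adding a drift that depends on the empirical measures of neighbors in $\Gmc_N(t)$; by Girsanov's theorem the law of $\{Z^{i,N}\}$ is absolutely continuous with respect to that of $\{X^i\}$, with Radon--Nikodym derivative $J^N(T)=\exp(J^{N,1}(T)-\tfrac12 J^{N,2}(T))$, where $J^{N,1}(T)$ is a stochastic integral (martingale) term and $J^{N,2}(T)$ its quadratic variation; these are the quantities $J^{N,1}(T)$, $J^{N,2}(T)$ referenced in \eqref{eq:JN1}, \eqref{eq:JN2}. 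The whole problem is thereby reduced to identifying the joint limiting distribution of $\big(\eta^N(\phi_1),\dots,\eta^N(\phi_k),\,J^{N,1}(T),J^{N,2}(T)\big)$ under the product measure $\Pmb^N$, and then using the change of measure to read off the law of $\eta^N(\phi)$ under $\Pbd$.

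The key steps, in order: (i) Set up the Girsanov density $J^N(T)$ explicitly and split it into the martingale part $J^{N,1}(T)$ and quadratic-variation part $J^{N,2}(T)$. (ii) For the quadratic variation part, follow the strategy outlined in the introduction: by carefully accounting for the effect of the time-varying random interaction graph (the analogue of Lemma \ref{lem:Ttil}) and bounding the resulting error (the analogue of Lemma \ref{lem:tiljn2}), reduce $J^{N,2}(T)$ to a symmetric statistic of the i.i.d.\ particles on a complete graph, and apply the classical limit theory of \cite{Dynkin1983} to identify its limit in terms of $\int_0^T\lambda_t\,dt = \textnormal{Trace}(AA^*)$ (using Lemma \ref{lem:trace}(a)); here Condition \ref{cond:cond2} ($\liminf\pbar_N>0$ and precompactness of $\{p_N\}$) is what allows this reduction. (iii) For the martingale part, first replace $J^{N,1}(T)$ by $\Jtil^{N,1}(T)$, an incomplete-U-statistic-type object with weaker dependence on the graph (the analogue of Lemma \ref{lem:JN1_JN1_tilde}), then extend Janson's CLT for incomplete U-statistics \cite{Janson1984,Blom1976some} to the present setting where the incompleteness appears inside stochastic integrals, combine with the symmetric-statistics limit theory, and obtain (the analogue of Lemma \ref{lem:key}) that $\Jtil^{N,1}(T)$ converges to $Z + I_2$, the sum of a Gaussian $Z$ and a second-order multiple Wiener integral $I_2$, with the crucial independence structure. (iv) Assemble the joint convergence of $(\eta^N(\phi_j))_j$ together with $(J^{N,1}(T),J^{N,2}(T))$ under $\Pmb^N$: the Gaussian part $\eta^N(\phi)$ pairs with the Gaussian part of $\Jtil^{N,1}(T)$ through the operator $A$ (note $A\phibd(\omega)=\int h(\omega',\omega)\phibd(\omega')\,\nu(d\omega')$, so the limiting covariance of $\eta^N(\phi)$ with $J^{N,1}(T)$ is $\langle A\phibd,\cdot\rangle$-type), and is independent of the multiple Wiener integral of order $2$ and of the $J^{N,2}$-limit. (v) Finally, convert back to $\Pbd$: for test functions $g_1,\dots,g_k$,
\begin{equation*}
	\Ebf_{\Pbd}\Big[\prod_{j=1}^k g_j(\eta^N(\phi_j))\Big] = \Ebf_{\Pmb^N}\Big[\prod_{j=1}^k g_j(\eta^N(\phi_j))\, J^N(T)\Big],
\end{equation*}
and pass to the limit; the key algebraic miracle (as emphasized in Section \ref{sec:inf-over}) is that $\Ebf e^{Z-\frac12\textnormal{Var}(Z)\cdots}$ normalizes correctly — more precisely that the mean $b(p)$ and variance $\sigma^2(p)$ of the Gaussian limit satisfy $b(p)+\sigma^2(p)/2=0$ — so that $J^N(T)$ has limit of expectation $1$ and a Cameron--Martin–type shift produces exactly the covariance $\langle(I-A)^{-1}\phibd,(I-A)^{-1}\psibd\rangle_{L^2(\nu)}$. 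The invertibility of $I-A$ is guaranteed by Lemma \ref{lem:trace}(c), and $\textnormal{Trace}(A^n)=0$ for $n\ge 2$ (Lemma \ref{lem:trace}(b)) is what makes the exponential moment of the limiting Gaussian-plus-Wiener-integral tractable.

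The main obstacle is step (iii): the asymptotic analysis of the martingale part $J^{N,1}(T)$. Because the interaction is governed by a time-evolving random graph, the collection of particles and edges is \emph{not} a collection of i.i.d.\ particle--edge units, so neither the classical Sznitman/Shiga--Tanaka machinery nor a direct symmetric-statistics argument applies; one genuinely needs the incomplete-U-statistic viewpoint and a process-level extension of Janson's theorem, together with a delicate decomposition separating the part of $\Jtil^{N,1}(T)$ that contributes a Gaussian limit (and couples with $\eta^N$) from the part that contributes the order-2 multiple Wiener integral. Controlling the error in the reduction $J^{N,1}(T)\approx\Jtil^{N,1}(T)$ — which requires sharp second-moment bounds on graph functionals, and is where the non-degeneracy $\liminf\pbar_N>0$ is essential — is the technically heaviest piece. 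Once the joint limit law of $(\eta^N(\phi_j))_j,\,J^{N,1}(T),\,J^{N,2}(T)$ is in hand, step (v) is a (nontrivial but) mechanical Gaussian computation exploiting Lemma \ref{lem:trace}.
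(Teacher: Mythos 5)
Your proposal follows essentially the same route as the paper: Girsanov change of measure to reduce to i.i.d.\ data under $\Pmb^N$; splitting the Radon--Nikodym exponent into the martingale part $J^{N,1}$ and quadratic-variation part $J^{N,2}$; handling $J^{N,2}$ by reduction to a symmetric statistic plus a deterministic shift and applying Dynkin--Mandelbaum; handling $J^{N,1}$ by first replacing it with the incomplete-U-statistic object $\Jtil^{N,1}$ and then proving a process-level extension of Janson's theorem (this is exactly Lemma \ref{lem:key} in the paper); and closing with the joint convergence, uniform integrability of $\exp(J^N(T))$, and the cancellation $b(p)+\sigma^2(p)/2=0$. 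That is the paper's proof.

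One small conceptual caveat worth flagging in step (iv): you write that the Gaussian limit $I_1(\phibd)$ of $\eta^N(\phi)$ is \emph{independent} of the order-2 multiple Wiener integral appearing in the limit of $J^{N,1}$ and $J^{N,2}$. In fact $I_1$ and $I_2$ live in different Wiener chaoses and so are uncorrelated, but they are \emph{not} independent --- and this dependence is exactly what produces the resolvent $(I-A)^{-1}$ in the limiting covariance (if they were independent you would get plain $\langle\phibd,\psibd\rangle_{L^2(\nu)}$, not $\langle(I-A)^{-1}\phibd,(I-A)^{-1}\psibd\rangle_{L^2(\nu)}$). The correct independence is between the extra Gaussian $Z$ coming from the edge randomness and the whole family $\{I_k(\cdot)\}_{k\ge1}$. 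The paper handles the nontrivial joint exponential moment $\Ebf\exp(iI_1(\phibd)+\tfrac12 I_2(f))$ through the Shiga--Tanaka identity (Lemma \ref{lem:Shiga_Tanaka}), which is where $(I-A)^{-1}$ actually materializes; your sketch invokes a ``Cameron--Martin--type shift,'' which is the right intuition, but the independence statement preceding it should be corrected.
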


Proof of the theorem is given in Section \ref{sec:pf_CLT}.

\section{Preliminary estimates} \label{sec:pre_results}

In this section we present several elementary results for a binomial distribution, which will be used for the proof of Theorems~\ref{thm:NpN_rate} and~\ref{thm:CLT}.
Proofs to these results are provided in Appendix~\ref{sec:pf_section_5} for completeness.

\begin{lemma} 
	\label{lem:prep_1}
	Let $X$ be a Binomial random variable with number of trials $n$ and probability of success $p$.
	Let $q \doteq 1 - p$.
	Then
	\begin{equation*}
		\Ebf \frac{1}{X+1} = \frac{1-q^{n+1}}{(n+1)p} \le \frac{1}{(n+1)p}.
	\end{equation*}
	Also for each $m = 2,3,\dotsc$,
	\begin{align*}
		& \Ebf \frac{1}{X+m} \le \frac{1-q^{n+1}}{(n+m)p} \le \frac{1}{(n+m)p}, \\
		& \Ebf \frac{1}{(X+1)^m} \le \frac{m^m}{(n+1)^m p^m}.
	\end{align*}
\end{lemma}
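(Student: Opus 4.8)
The plan is to prove the three estimates separately, each by an elementary binomial identity followed by a reindexing of the sum, reducing the later parts to the first.

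\textbf{Part 1.} I would start from $\Ebf\frac{1}{X+1}=\sum_{k=0}^{n}\frac{1}{k+1}\binom{n}{k}p^{k}q^{n-k}$ and use the identity $\frac{1}{k+1}\binom{n}{k}=\frac{1}{n+1}\binom{n+1}{k+1}$. Substituting and shifting the index to $j=k+1$ turns the sum into $\frac{1}{(n+1)p}\sum_{j=1}^{n+1}\binom{n+1}{j}p^{j}q^{n+1-j}=\frac{1-q^{n+1}}{(n+1)p}$, which gives both the claimed identity and the bound $\le\frac{1}{(n+1)p}$ since $q^{n+1}\ge 0$.

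\textbf{Part 2.} For $m\ge 2$ I would reduce to Part 1 via the elementary inequality
\[
(n+m)(k+1)\le (n+1)(k+m),\qquad 0\le k\le n,\ m\ge 1,
\]
which is just $(m-1)(n-k)\ge 0$ after expanding. Hence $\frac{1}{k+m}\le\frac{n+1}{(n+m)(k+1)}$ for every value $k$ taken by $X$, and summing against the binomial weights yields $\Ebf\frac{1}{X+m}\le\frac{n+1}{n+m}\,\Ebf\frac{1}{X+1}=\frac{1-q^{n+1}}{(n+m)p}$ by Part 1; the final bound again follows from $q^{n+1}\ge 0$.

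\textbf{Part 3.} I would first bound $\frac{1}{(k+1)^{m}}$ by a product of consecutive linear factors. Since $\frac{(k+1)(k+2)\cdots(k+m)}{(k+1)^{m}}=\prod_{i=2}^{m}\bigl(1+\tfrac{i-1}{k+1}\bigr)$ and each factor is positive and nonincreasing in $k$, the product is maximized at $k=0$, where it equals $m!$. Therefore $\frac{1}{(k+1)^{m}}\le\frac{m!}{(k+1)(k+2)\cdots(k+m)}$ for all $k\ge 0$. Next, the identity $\frac{1}{(k+1)\cdots(k+m)}\binom{n}{k}=\frac{1}{(n+1)\cdots(n+m)}\binom{n+m}{k+m}$ lets me compute, after shifting the index to $j=k+m$,
\[
\Ebf\frac{1}{(X+1)\cdots(X+m)}=\frac{1}{(n+1)\cdots(n+m)\,p^{m}}\sum_{j=m}^{n+m}\binom{n+m}{j}p^{j}q^{n+m-j}\le\frac{1}{(n+1)\cdots(n+m)\,p^{m}}\le\frac{1}{(n+1)^{m}p^{m}}.
\]
Combining the two displays gives $\Ebf\frac{1}{(X+1)^{m}}\le\frac{m!}{(n+1)^{m}p^{m}}$, and $m!\le m^{m}$ finishes the proof.

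I do not anticipate a genuine obstacle here: the statement is entirely elementary. The only points requiring a moment's care are selecting the right auxiliary inequalities — in particular the monotonicity-in-$k$ argument in Part 3 that pins the constant at $m!$ (so $\le m^{m}$), and noticing that Part 2 is cheapest obtained by comparison to Part 1 rather than by a direct computation of $\Ebf\frac{1}{X+m}$.
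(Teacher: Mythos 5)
Your proof is correct and follows essentially the same route as the paper's: Part 1 is the same index shift using $\tfrac{1}{k+1}\binom{n}{k}=\tfrac{1}{n+1}\binom{n+1}{k+1}$; your cross-multiplied inequality $(n+m)(k+1)\le(n+1)(k+m)$ in Part 2 is exactly the paper's comparison $\tfrac{X+1}{X+m}\le\tfrac{n+1}{n+m}$; and Part 3 uses the same factorial-denominator identity and reindexing. The one small difference is that you isolate the sharp intermediate constant $m!$ in Part 3 before relaxing to $m^m$, whereas the paper inserts $m^m$ from the outset — a marginally tighter bookkeeping, but not a different argument.
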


For the following lemma, let $\zeta_{ii} \equiv 1$ for $i \in \Nbd$ and $\{\zeta_{ij} = \zeta_{ji} : 1 \le i < j \le N\}$ be independent Bernoulli random variables with $\Pbd(\zeta_{ij}=1) = p_{\alpha\gamma,N} = p_{\gamma\alpha,N}$ for $i \in \Nalpha$, $j \in \Ngamma$, and $\alpha, \gamma \in \Kbd$.
Let $q_{\alpha\gamma,N} \doteq 1 - p_{\alpha\gamma,N}$ and $N_{i,\gamma} \doteq \sum_{j \in \Ngamma} \zeta_{ij}$ for $i \in \Nbd$ and $\alpha, \gamma \in \Kbd$. 


\begin{lemma} 
	\label{lem:prep_2}
	For  $\alpha,\gamma \in \Kbd$,
	\begin{align*}
		& \Ebf \left( \sum_{k \in \Nalpha} \frac{N_\gamma \zeta_{ki_\gamma}}{N_\alpha N_{k,\gamma}} \one_{\{N_{k,\gamma} > 0\}} - 1 \right)^2 \le \frac{4}{N_\alpha p_{\alpha\gamma,N}} + 2 e^{-N_\gamma p_{\alpha\gamma,N}}, \quad i_\gamma \in \Ngamma, \\
		& \Ebf \left( \sum_{k \in \Nalpha} \frac{\zeta_{ki_\alpha}}{N_{k,\alpha}} - 1 \right)^2 \le \frac{3}{N_\alpha p_{\alpha\alpha,N}}, \quad i_\alpha \in \Nalpha.
	\end{align*}
\end{lemma}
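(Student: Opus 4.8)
The plan is to handle the two estimates separately, since the first one (when the two types differ) reduces to a sum of \emph{independent} terms, whereas the second one involves summands sharing edges and needs an extra conditioning step. Throughout abbreviate $p=p_{\alpha\gamma,N}$, $q=q_{\alpha\gamma,N}$.

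\emph{First estimate, case $\alpha\neq\gamma$.} Write $U\doteq\sum_{k\in\Nalpha}\frac{N_\gamma\zeta_{ki_\gamma}}{N_\alpha N_{k,\gamma}}\one_{\{N_{k,\gamma}>0\}}$. The key point is that, since $\alpha\neq\gamma$ and $i_\gamma\in\Ngamma$, the $k$-th summand depends only on the edges $\{\zeta_{kj}:j\in\Ngamma\}$, and these edge sets are disjoint over distinct $k\in\Nalpha$; hence the summands are i.i.d. On $\{\zeta_{ki_\gamma}=1\}$ we have $N_{k,\gamma}=1+\mathrm{Bin}(N_\gamma-1,p)$, so by the exact identity in Lemma~\ref{lem:prep_1}, $\Ebf\frac{\zeta_{ki_\gamma}}{N_{k,\gamma}}=p\cdot\frac{1-q^{N_\gamma}}{N_\gamma p}=\frac{1-q^{N_\gamma}}{N_\gamma}$, giving $\Ebf U=1-q^{N_\gamma}$ and $(1-\Ebf U)^2=q^{2N_\gamma}\le e^{-2N_\gamma p}$. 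For the variance, independence together with the last bound of Lemma~\ref{lem:prep_1} yields
\[
\mathrm{Var}(U)=\frac{N_\gamma^2}{N_\alpha^2}\sum_{k\in\Nalpha}\mathrm{Var}\Big(\tfrac{\zeta_{ki_\gamma}}{N_{k,\gamma}}\one_{\{N_{k,\gamma}>0\}}\Big)\le\frac{N_\gamma^2}{N_\alpha}\,p\,\Ebf\tfrac1{(1+\mathrm{Bin}(N_\gamma-1,p))^2}\le\frac{N_\gamma^2}{N_\alpha}\,p\cdot\frac{4}{N_\gamma^2 p^2}=\frac{4}{N_\alpha p}.
\]
Adding, $\Ebf(U-1)^2=\mathrm{Var}(U)+(1-\Ebf U)^2\le\frac{4}{N_\alpha p}+e^{-2N_\gamma p}\le\frac{4}{N_\alpha p}+2e^{-N_\gamma p}$. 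When $\alpha=\gamma$ the indicator is identically $1$, $N_\gamma=N_\alpha$, and $U$ coincides with the quantity $S$ in the second estimate, so this case follows from the second estimate and $3\le4$.

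\emph{Second estimate} ($\alpha=\gamma$; write $p=p_{\alpha\alpha,N}$). Put $S\doteq\sum_{k\in\Nalpha}\frac{\zeta_{ki_\alpha}}{N_{k,\alpha}}$. First I would observe that $\Ebf S=1$ \emph{exactly}: the family $\{S_j:j\in\Nalpha\}$ with $S_j\doteq\sum_{k\in\Nalpha}\tfrac{\zeta_{kj}}{N_{k,\alpha}}$ is exchangeable (a symmetric functional of the i.i.d.\ edges inside $\Nalpha$) while $\sum_{j\in\Nalpha}S_j=\sum_{k\in\Nalpha}\tfrac1{N_{k,\alpha}}\sum_{j\in\Nalpha}\zeta_{kj}=N_\alpha$, so every $\Ebf S_j=1$; hence $\Ebf(S-1)^2=\mathrm{Var}(S)$. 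To bound the variance I would condition on the $\sigma$-field $\Hmc$ generated by the edges \emph{not} incident to $i_\alpha$. Under $\Hmc$ the variables $\{\zeta_{ki_\alpha}\}_{k\neq i_\alpha}$ remain i.i.d.\ $\mathrm{Ber}(p)$, and for $k\neq i_\alpha$, $N_{k,\alpha}=\zeta_{ki_\alpha}+N_{k,\alpha}'$ with $N_{k,\alpha}'\doteq\sum_{j\in\Nalpha\setminus\{i_\alpha\}}\zeta_{kj}\ge1$ being $\Hmc$-measurable, so $S=\frac1{N_{i_\alpha,\alpha}}+\sum_{k\neq i_\alpha}\frac{\zeta_{ki_\alpha}}{1+N_{k,\alpha}'}$. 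Then $\mathrm{Var}(S)=\Ebf[\mathrm{Var}(S\mid\Hmc)]+\mathrm{Var}(\Ebf[S\mid\Hmc])$: the first term is controlled because, given $\Hmc$, the sum over $k\neq i_\alpha$ is a sum of independent terms with conditional variance $\le p\sum_k\tfrac1{(1+N_{k,\alpha}')^2}$, while the $\tfrac1{N_{i_\alpha,\alpha}}$-contribution is $\le\Ebf\tfrac1{N_{i_\alpha,\alpha}^2}\le\Ebf\tfrac1{N_{i_\alpha,\alpha}}\le\tfrac1{N_\alpha p}$; taking expectations and using $N_{k,\alpha}'=1+\mathrm{Bin}(N_\alpha-2,p)$ with the binomial inverse-moment estimates of the type in Lemma~\ref{lem:prep_1} gives $O((N_\alpha p)^{-1})$. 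For the second term, $\Ebf[S\mid\Hmc]$ is a deterministic constant plus $p\sum_{k\neq i_\alpha}\tfrac1{1+N_{k,\alpha}'}$, and the $N_{k,\alpha}'$ are pairwise dependent only through the single shared edge $\zeta_{kk'}$, so each of the $\approx N_\alpha^2$ covariances is $O(p(N_\alpha p)^{-4})$ and their contribution is of lower order. A careful bookkeeping of constants then yields $\mathrm{Var}(S)\le\frac3{N_\alpha p}$.

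The probabilistic content — disjointness of edge neighbourhoods in the first estimate, and removing the edges at $i_\alpha$ by conditioning in the second — is the easy part; the main obstacle I anticipate is purely quantitative, namely forcing the constant in the second estimate down to exactly $3$. This requires using the first-moment normalization $\Ebf S=1$ to cancel the $O(1)$ bulk of $\Ebf S^2$, applying the binomial inverse-moment bounds in their sharp form (e.g.\ $\Ebf\frac1{(X+1)(X+2)}\le\frac1{(n+1)(n+2)p^2}$ rather than the cruder bound on $\Ebf\frac1{(X+1)^2}$), and verifying that the covariance corrections coming from the weak dependence of $\{N_{k,\alpha}'\}$ are genuinely negligible.
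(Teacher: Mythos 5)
Your first estimate (for $\alpha\neq\gamma$) is correct and in fact somewhat cleaner than the paper's argument: where the paper expands the square directly and bounds the four resulting pieces, you use the bias--variance decomposition $\Ebf(U-1)^2=\mathrm{Var}(U)+(1-\Ebf U)^2$ together with the observation that the summands are independent across $k\in\Nalpha$ because their edge neighbourhoods $\{\zeta_{kj}:j\in\Ngamma\}$ are disjoint. This gives $\mathrm{Var}(U)\le\frac{4}{N_\alpha p}$ by the $m=2$ inverse-moment bound, and $(1-\Ebf U)^2=q^{2N_\gamma}\le 2e^{-N_\gamma p}$, matching the stated bound. Your reduction of the $\alpha=\gamma$ case of the first display to the second display is also valid.

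For the second estimate, however, there is a real gap in the outline. You decompose $\mathrm{Var}(S)=\Ebf[\mathrm{Var}(S\mid\Hmc)]+\mathrm{Var}(\Ebf[S\mid\Hmc])$ and, within the first term, write $S=\frac1{N_{i_\alpha,\alpha}}+\sum_{k\neq i_\alpha}\frac{\zeta_{ki_\alpha}}{1+N_{k,\alpha}'}$ and bound the two pieces separately. But given $\Hmc$ both pieces are nontrivial functions of the \emph{same} Bernoulli vector $(\zeta_{ki_\alpha})_{k\neq i_\alpha}$, so $\mathrm{Var}(S\mid\Hmc)$ contains a covariance cross-term that your outline does not address. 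The cross-term is in fact $\le 0$ (one piece is decreasing in each $\zeta_{ki_\alpha}$, the other increasing, so Harris/FKG applies to the product Bernoulli law), but this has to be invoked; a blind Cauchy--Schwarz bound on the cross-term would double the main constant. Even granting FKG, the accounting you sketch yields roughly $\frac{2}{N_\alpha p}+\frac{1}{N_\alpha}+\frac{1}{N_\alpha^2 p}$, and verifying that the secondary terms fit inside the remaining $\frac{1}{N_\alpha p}$ of budget \emph{uniformly in $p\in(0,1]$ and $N_\alpha$} is not automatic (e.g.\ for $p$ close to $1$ and small $N_\alpha$ the slack is marginal); it is not simply ``lower order'' in an asymptotic sense but must hold as a clean inequality. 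You flag this as the obstacle, and rightly so.

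By contrast, the paper's proof of the second estimate expands $\Ebf S^2-1$ over the index pairs $(k,l)$ and partitions them into four sets: $\{k=l\}$, $\{k\neq l,\,k=i_\alpha\}$, $\{k\neq l,\,l=i_\alpha\}$, and $\{i_\alpha,k,l\text{ distinct}\}$. The first three each contribute at most $\frac{1}{N_\alpha p}$ (the diagonal term is reduced via the exchangeability $\Lmc(\zeta_{ki_\alpha},N_{k,\alpha})=\Lmc(\zeta_{i_\alpha k},N_{i_\alpha,\alpha})$ to $\Ebf\frac1{N_{i_\alpha,\alpha}}$, avoiding any $(N_\alpha p)^{-2}$ terms); the last piece is bounded by $1$, cancelling the $-1$ — the key device being to subtract the shared edge $\zeta_{kl}$ from both denominators $N_{k,\alpha}$ and $N_{l,\alpha}$ so that they become independent and each factor is controlled by the $m=2$ estimate of Lemma~\ref{lem:prep_1}. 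This yields $\frac{3}{N_\alpha p}$ exactly with no secondary terms to juggle. If you want to complete your own route, the minimal fixes are: (i) state the FKG argument for the conditional cross-term, and (ii) do the term-by-term bookkeeping with the sharp binomial bounds you mention, checking that the total stays below $\frac{3}{N_\alpha p}$ for every admissible $(N_\alpha,p)$, not just asymptotically.
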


%

For the following lemma, let $Y = 1 + \sum_{i=2}^N \zeta_i$, where $\{\zeta_i, i=2,\dotsc,N\}$ are i.i.d.\ Bernoulli random variables with probability of success $p_N$.
We have the following tail bound on the random variable $Y$.
\begin{lemma} 
	\label{lem:prep_4}
	For $k > 0$, let $C_N(k) \doteq \sqrt{k (N-1) \log N }$. Then
	$\Pbd \left( |Y - N p_N| > C_N(k) + 1 \right) \le \frac{2}{N^{2k}}.$
\end{lemma}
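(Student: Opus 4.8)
The plan is to apply a standard Chernoff/Bernstein-type concentration inequality to the centered binomial part of $Y$. Write $Y - Np_N = (1-p_N) + \sum_{i=2}^N (\zeta_i - p_N)$, so that $|Y - Np_N| \le 1 + |S_{N-1}|$ where $S_{N-1} \doteq \sum_{i=2}^N(\zeta_i - p_N)$ is a sum of $N-1$ i.i.d.\ centered Bernoulli variables, each bounded in $[-1,1]$ (indeed in $[-p_N, 1-p_N]$) with variance $p_N(1-p_N) \le 1/4$. Consequently
\[
	\Pbd\left( |Y - Np_N| > C_N(k) + 1 \right) \le \Pbd\left( |S_{N-1}| > C_N(k) \right).
\]
So it suffices to bound the right-hand side by $2/N^{2k}$.

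For this I would use Hoeffding's inequality (or equivalently the Azuma/bounded-differences bound): for a sum of $m$ independent random variables each taking values in an interval of length at most $1$,
\[
	\Pbd\left( |S_m| > t \right) \le 2 \exp\left( - \frac{2 t^2}{m} \right).
\]
Here $m = N-1$ and $t = C_N(k) = \sqrt{k(N-1)\log N}$, so $\frac{2 t^2}{m} = \frac{2 k (N-1)\log N}{N-1} = 2k \log N$, which gives
\[
	\Pbd\left( |S_{N-1}| > C_N(k) \right) \le 2 \exp\left( - 2k \log N \right) = \frac{2}{N^{2k}}.
\]
Combining the two displays yields the claim. (One has to be slightly careful that $\zeta_i - p_N$ ranges in an interval of length exactly $1$, which is what Hoeffding requires; since $\zeta_i \in \{0,1\}$ this holds regardless of $p_N$.)

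There is no real obstacle here — the only mild subtlety is the bookkeeping of the deterministic shift by $1$ coming from the ``$1+$'' in the definition of $Y$ and from recentering $\zeta_1 \equiv 1$; both are absorbed into the extra ``$+1$'' inside the probability on the left-hand side, so no sharper constant is needed. If one prefers to avoid quoting Hoeffding, the same bound follows from the exponential Markov inequality $\Pbd(S_{N-1} > t) \le e^{-\theta t}\,\Ebf e^{\theta S_{N-1}} = e^{-\theta t}\big(1 + p_N(e^\theta - 1 - \theta e^{?})\big)^{\dots}$ together with the elementary bound $\Ebf e^{\theta(\zeta_i - p_N)} \le e^{\theta^2/8}$ valid for all $\theta \in \Rmb$ (Hoeffding's lemma for a $[0,1]$-valued variable), and then optimizing over $\theta$; this reproduces $\Pbd(|S_{N-1}| > t) \le 2 e^{-2t^2/(N-1)}$ and hence the stated tail bound.
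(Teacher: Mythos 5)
Your proof is correct and follows essentially the same route as the paper: reduce to the centered sum $\sum_{i=2}^N(\zeta_i-p_N)$ (absorbing the deterministic offset into the ``$+1$''), apply Hoeffding's inequality, and substitute $t=C_N(k)$. The paper additionally notes the trivial cases $p_N\in\{0,1\}$ up front, but as you observe Hoeffding covers those anyway, so this is a cosmetic difference.
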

\section{Proof of Theorem \ref{thm:NpN_rate}} \label{sec:pf_NpN_rate}
Fix $\alpha \in \Kbd$ and $\ialpha \in \Nalpha$.
	For fixed $t \in [0,T]$, we have from Cauchy--Schwarz and Doob's inequalities that
	\begin{align}
		\Ebf \left\|Z^{\ialpha,N} - X^{\ialpha}\right\|_{*,t}^2 
		& \le 2KT \sum_{\gamma=1}^K \int_0^t \Ebf \left\| b_{\alpha\gamma}(Z^{\ialpha,N}_s, \mu^{\ialpha,\gamma,N}_s) - b_{\alpha\gamma}(X^{\ialpha}_s, \mu^\gamma_s) \right\|^2 ds \notag \\
		& \qquad + 8K \sum_{\gamma=1}^K \int_0^t \Ebf \left\| \sigma_{\alpha\gamma}(Z^{\ialpha,N}_s, \mu^{\ialpha,\gamma,N}_s) - \sigma_{\alpha\gamma}(X^{\ialpha}_s, \mu^\gamma_s) \right\|^2 ds. \label{eq:NpN_1}
	\end{align}
	Fix $\gamma \in \Kbd$.
	Note that by adding and subtracting terms we have for $s \in [0,T]$,
	\begin{align}
		& \left\| b_{\alpha\gamma}(Z^{\ialpha,N}_s, \mu^{\ialpha,\gamma,N}_s) - b_{\alpha\gamma}(X^{\ialpha}_s, \mu^\gamma_s) \right\| \notag \\
		& \quad = \left\| \sum_{j \in \Ngamma} \frac{\xi_{\ialpha j}(s)}{N_{\ialpha,\gamma}(s)} \one_{\{N_{\ialpha,\gamma}(s) > 0\}} \bbar_{\alpha\gamma}(Z^{\ialpha,N}_s,Z^{j,N}_s) - b_{\alpha\gamma}(X^{\ialpha}_s,\mu^\gamma_s) \right\| \notag \\
		& \quad \le \sum_{n=1}^4 \Tmc^{\gamma,N,n}(s), \label{eq:NpN_2}
			\end{align}
where
$$		
\Tmc^{\gamma,N,1}(s) \doteq 		\left\| \sum_{j \in \Ngamma} \frac{\xi_{\ialpha j}(s)}{N_{\ialpha,\gamma}(s)} \one_{\{N_{\ialpha,\gamma}(s) > 0\}} \left( \bbar_{\alpha\gamma}(Z^{\ialpha,N}_s,\ZjNs) - \bbar_{\alpha\gamma}(X^{\ialpha}_s,\ZjNs) \right) \right\|,$$
$$
\Tmc^{\gamma,N,2}(s) \doteq \left\| \sum_{j \in \Ngamma} \frac{\xi_{\ialpha j}(s)}{N_{\ialpha,\gamma}(s)} \one_{\{N_{\ialpha,\gamma}(s) > 0\}} \left( \bbar_{\alpha\gamma}(X^{\ialpha}_s,\ZjNs) - \bbar_{\alpha\gamma}(X^{\ialpha}_s,X^j_s) \right) \right\|,$$
$$
\Tmc^{\gamma,N,3}(s) \doteq		\left\| \sum_{j \in \Ngamma} \frac{\xi_{\ialpha j}(s)}{N_{\ialpha,\gamma}(s)} \one_{\{N_{\ialpha,\gamma}(s) > 0\}} \left(\bbar_{\alpha\gamma}(X^{\ialpha}_s,X^j_s) - b_{\alpha\gamma}(X^{\ialpha}_s,\mu^\gamma_s) \right) \right\|,$$
and
$$
\Tmc^{\gamma,N,4}(s) \doteq \left\| \left( \sum_{j \in \Ngamma} \frac{\xi_{\ialpha j}(s)}{N_{\ialpha,\gamma}(s)} \one_{\{N_{\ialpha,\gamma}(s) > 0\}} - 1 \right) b_{\alpha\gamma}(X^{\ialpha}_s,\mu^\gamma_s) \right\|.$$
	
	Now we will analyze these terms one by one.
	For $\Tmc^{\gamma,N,1}$, first note that
	\begin{equation*}
		\sum_{j \in \Ngamma} \frac{\xi_{\ialpha j}(s)}{N_{\ialpha,\gamma}(s)} \one_{\{N_{\ialpha,\gamma}(s) > 0\}} = \one_{\{N_{\ialpha,\gamma}(s) > 0\}} \le 1.
	\end{equation*}
	So we have from Cauchy--Schwarz inequality and Lipschitz property of $\bbar$ that
	\begin{align}
		\Ebf [\Tmc^{\gamma,N,1}(s)]^2 & = \Ebf \left\| \sum_{j \in \Ngamma} \frac{\xi_{\ialpha j}(s)}{N_{\ialpha,\gamma}(s)} \one_{\{N_{\ialpha,\gamma}(s) > 0\}} \left( \bbar_{\alpha\gamma}(Z^{\ialpha,N}_s,\ZjNs) - \bbar_{\alpha\gamma}(X^{\ialpha}_s,\ZjNs) \right) \right\|^2 \notag \\
		& \le L^2 \Ebf \sum_{j \in \Ngamma} \frac{\xi_{\ialpha j}(s)}{N_{\ialpha,\gamma}(s)} \one_{\{N_{\ialpha,\gamma}(s) > 0\}} \left\| Z^{\ialpha,N}_s - X^{\ialpha}_s \right\|^2 \notag \\
		& \le L^2 \Ebf \|Z^{\ialpha,N}_s - X^{\ialpha}_s\|^2. \label{eq:TN1_bd}
	\end{align}
	
	For $\Tmc^{\gamma,N,2}$, when $\gamma \ne \alpha$, we have
	\begin{align}
		\Ebf [\Tmc^{\gamma,N,2}(s)]^2 & = \Ebf \left\| \sum_{j \in \Ngamma} \frac{\xi_{\ialpha j}(s)}{N_{\ialpha,\gamma}(s)} \one_{\{N_{\ialpha,\gamma}(s) > 0\}} \left( \bbar_{\alpha\gamma}(X^{\ialpha}_s,\ZjNs) - \bbar_{\alpha\gamma}(X^{\ialpha}_s,X^j_s) \right) \right\|^2 \notag \\
		& \le L^2 \Ebf \sum_{j \in \Ngamma} \frac{\xi_{\ialpha j}(s)}{N_{\ialpha,\gamma}(s)} \one_{\{N_{\ialpha,\gamma}(s) > 0\}} \left\|Z^{j,N}_s - X^j_s\right\|^2 \notag \\
		& = L^2 \Ebf \frac{N_\gamma \xi_{\ialpha\igamma}(s)}{N_{\ialpha,\gamma}(s)} \one_{\{N_{\ialpha,\gamma}(s) > 0\}} \left\|Z^{\igamma,N}_s - X^\igamma_s\right\|^2 \notag \\
		& = L^2 \Ebf \sum_{k \in \Nalpha} \frac{N_\gamma \xi_{k\igamma}(s)}{N_\alpha N_{k,\gamma}(s)} \one_{\{N_{k,\gamma}(s) > 0\}} \left\|Z^{\igamma,N}_s - X^\igamma_s\right\|^2, \label{eq:TN2_gamma}
	\end{align}
	where the last two equalities follow from the fact that when $\gamma \ne \alpha$
	\begin{align*}
		\Lmc(\xi_{i_\alpha j}(s), N_{i_\alpha,\gamma}(s), Z^{j,N}_s, X^j_s) & = \Lmc(\xi_{i_\alpha i_\gamma}(s), N_{i_\alpha,\gamma}(s), Z^{\igamma,N}_s, X^\igamma_s) \\
		& = \Lmc(\xi_{ki_\gamma}(s), N_{k,\gamma}(s), Z^{\igamma,N}_s, X^\igamma_s), \quad j \in \Ngamma, k \in \Nalpha.
	\end{align*}
	When $\gamma = \alpha$, we have
	\begin{align}
		\Ebf [\Tmc^{\alpha,N,2}(s)]^2 & = \Ebf \left\| \sum_{k \in \Nalpha} \frac{\xi_{\ialpha k}(s)}{N_{\ialpha,\alpha}(s)} \left( \bbar_{\alpha\alpha}(X^{\ialpha}_s,Z^{k,N}_s) - \bbar_{\alpha\alpha}(X^{\ialpha}_s,X^k_s) \right) \right\|^2 \notag \\
		& \le L^2 \Ebf \sum_{k \in \Nalpha} \frac{\xi_{\ialpha k}(s)}{N_{\ialpha,\alpha}(s)} \left\|Z^{k,N}_s - X^k_s\right\|^2 \notag \\
		& = L^2 \Ebf \sum_{k \in \Nalpha} \frac{\xi_{k\ialpha}(s)}{N_{k,\alpha}(s)} \left\|Z^{\ialpha,N}_s - X^\ialpha_s\right\|^2, \label{eq:TN2_alpha}
	\end{align}	
	where the last line follows from the fact that
	\begin{equation*}
		\Lmc(\xi_{\ialpha k}(s), N_{\ialpha,\alpha}(s), Z^{k,N}_s, X^k_s) = \Lmc(\xi_{k\ialpha}(s), N_{k,\alpha}(s), Z^{\ialpha,N}_s, X^{\ialpha}_s), \quad k \in \Nalpha.
	\end{equation*}
	Combining \eqref{eq:TN2_gamma} and \eqref{eq:TN2_alpha} shows that for each $\gamma \in \Kbd$,
	\begin{align}
		& \Ebf [\Tmc^{\gamma,N,2}(s)]^2 \notag \\
		& \le L^2 \Ebf \sum_{k \in \Nalpha} \frac{N_\gamma \xi_{k\igamma}(s)}{N_\alpha N_{k,\gamma}(s)} \one_{\{N_{k,\gamma}(s) > 0\}} \left\|Z^{\igamma,N}_s - X^\igamma_s\right\|^2 \notag \\
		& = L^2 \Ebf \left[ \left( \sum_{k \in \Nalpha} \frac{N_\gamma \xi_{k\igamma}(s)}{N_\alpha N_{k,\gamma}(s)} \one_{\{N_{k,\gamma}(s) > 0\}} - 1 \right) \left\|Z^{\igamma,N}_s - X^\igamma_s\right\|^2 \right] + L^2 \Ebf \left\|Z^{\igamma,N}_s - X^\igamma_s\right\|^2 \notag \\
		& \le L^2 \left[ \Ebf \left( \sum_{k \in \Nalpha} \frac{N_\gamma \xi_{k\igamma}(s)}{N_\alpha N_{k,\gamma}(s)} \one_{\{N_{k,\gamma}(s) > 0\}} - 1 \right)^2 \Ebf \left\|Z^{\igamma,N}_s - X^\igamma_s\right\|^4\right]^{1/2} + L^2 \Ebf \left\|Z^{\igamma,N}_s - X^\igamma_s\right\|^2. \label{eq:TN2_temp}
	\end{align}
	From Condition \ref{cond:coefficients} and an application of Cauchy--Schwarz and Doob's inequalities analogous to that for \eqref{eq:NpN_1} it follows that
	\begin{equation} 
		\label{eq:ZiXi_bd}
		\sup_{N \in \Nmb} \max_{i \in \Nbd} \Ebf \left\|Z^{i,N}_t - X^i_t\right\|_{*,T}^4 \doteq \kappa_1 < \infty.
	\end{equation}
	Applying this and Lemma \ref{lem:prep_2} to \eqref{eq:TN2_temp} gives us for all $\gamma \in \Kbd$
	\begin{equation} 
		\label{eq:TN2_bd}
		\Ebf [\Tmc^{\gamma,N,2}(s)]^2 \le \kappa_2 \left(\frac{1}{N_\alpha p_{\alpha\gamma,N}(s)} + e^{-N_\gamma p_{\alpha\gamma,N}(s)}\right)^{1/2} + L^2 \Ebf \left\|Z^{\igamma,N}_s - X^{\igamma}_s\right\|^2.
	\end{equation}
	
	For $\Tmc^{\gamma,N,3}$, since $\{X^i: i \in \Nbd\}$ are independent of $\{\xi_{ij}, N_{i,\gamma}: i,j \in \Nbd, \gamma \in \Kbd\}$, we have
	\begin{align*}
		\Ebf \left[\Tmc^{\gamma,N,3}(s)\right]^2 & = \Ebf \left\| \sum_{j \in \Ngamma} \frac{\xi_{\ialpha j}(s)}{N_{\ialpha,\gamma}(s)} \one_{\{N_{\ialpha,\gamma}(s) > 0\}} \left(\bbar_{\alpha\gamma}(X^{\ialpha}_s,X^j_s) - b_{\alpha\gamma}(X^{\ialpha}_s,\mu^\gamma_s) \right) \right\|^2  \\
		& = \sum_{j \in \Ngamma} \sum_{k \in \Ngamma} \left[ \Ebf \left( \frac{\xi_{\ialpha j}(s)}{N_{\ialpha,\gamma}(s)} \one_{\{N_{\ialpha,\gamma}(s) > 0\}} \frac{\xi_{\ialpha k}(s)}{N_{\ialpha,\gamma}(s)} \one_{\{N_{\ialpha,\gamma}(s) > 0\}} \right) \right. \\
		& \quad \cdot \left. \Ebf \left( \left(\bbar_{\alpha\gamma}(X^{\ialpha}_s,X^j_s) - b_{\alpha\gamma}(X^{\ialpha}_s,\mu^\gamma_s) \right) \left(\bbar_{\alpha\gamma}(X^{\ialpha}_s,X^k_s) - b_{\alpha\gamma}(X^{\ialpha}_s,\mu^\gamma_s) \right) \right) \right]  \\
		& = \sum_{j \in \Ngamma} \left[ \Ebf \left( \frac{\xi_{\ialpha j}(s)}{N_{\ialpha,\gamma}^2(s)} \one_{\{N_{\ialpha,\gamma}(s) > 0\}} \right) \Ebf \left( \bbar_{\alpha\gamma}(X^{\ialpha}_s,X^j_s) - b_{\alpha\gamma}(X^{\ialpha}_s,\mu^\gamma_s) \right)^2 \right].
	\end{align*}
	Thus using Lemma \ref{lem:prep_1},
	\begin{align}
		\Ebf \left[\Tmc^{\gamma,N,3}(s)\right]^2
		& \le 4L^2 \Ebf \sum_{j \in \Ngamma} \frac{\xi_{\ialpha j}(s)}{N_{\ialpha,\gamma}^2(s)} \one_{\{N_{\ialpha,\gamma}(s) > 0\}}  = 4L^2 \Ebf \frac{1}{N_{\ialpha,\gamma}(s)} \one_{\{N_{\ialpha,\gamma}(s) > 0\}} \notag \\
		& \le 4L^2 \Ebf \frac{2}{1 + N_{\ialpha,\gamma}(s)}  \le \frac{8L^2}{(N_\gamma+1) p_{\alpha\gamma,N}(s)}. \label{eq:TN3_bd}
	\end{align}

	For $\Tmc^{\gamma,N,4}$, we have
	\begin{align}
		\Ebf \left[\Tmc^{\gamma,N,4}(s)\right]^2 & = \Ebf \left\| \left( \sum_{j \in \Ngamma} \frac{\xi_{\ialpha j}(s)}{N_{\ialpha,\gamma}(s)} \one_{\{N_{\ialpha,\gamma}(s) > 0\}} - 1 \right) b_{\alpha\gamma}(X^{\ialpha}_s,\mu^\gamma_s) \right\|^2 \notag \\
		& \le L^2 \Pbd(N_{\ialpha,\gamma}(s) = 0)  \le L^2 (1-p_{\alpha\gamma,N}(s))^{N_\gamma}  \le L^2 e^{-N_\gamma p_{\alpha\gamma,N}(s)}. \label{eq:TN4_bd}
	\end{align}
	
	Combining \eqref{eq:NpN_2}, \eqref{eq:TN1_bd} and \eqref{eq:TN2_bd} -- \eqref{eq:TN4_bd} gives us
	\begin{align}
		& \Ebf \left\| b_{\alpha\gamma}(Z^{\ialpha,N}_s, \mu^{\ialpha,\gamma,N}_s) - b_{\alpha\gamma}(X^{\ialpha}_s, \mu^\gamma_s) \right\|^2 \notag \\
		& \quad \le \kappa_3 \max_{i \in \Nbd} \Ebf \left\|Z^{i,N} - X^i\right\|_{*,s}^2 + \kappa_3 \left(\frac{1}{N_\alpha p_{\alpha\gamma,N}(s)} + e^{-N_\gamma p_{\alpha\gamma,N}(s)}\right)^{1/2} + \frac{\kappa_3}{N_\gamma p_{\alpha\gamma,N}(s)} + \kappa_3 e^{-N_\gamma p_{\alpha\gamma,N}(s)}. \label{eq:NpN_3}
	\end{align}
	From exactly the same argument as above, it is clear that \eqref{eq:NpN_3} holds with $b_{\alpha\gamma}$ replaced by $\sigma_{\alpha\gamma}$, for all $\alpha, \gamma \in \Kbd$.
	So we have from \eqref{eq:NpN_1} that
	\begin{align*}
		\max_{i \in \Nbd} \Ebf \left\|Z^{i,N} - X^i\right\|_{*,t}^2
		& \le \kappa_4 \int_0^t \max_{i \in \Nbd} \Ebf \left\|Z^{i,N} - X^i\right\|_{*,s}^2 \, ds + \kappa_4 \left(\frac{1}{\Nbar \pbar_N} + e^{-\Nbar \pbar_N}\right)^{1/2} \\
		& \quad + \frac{\kappa_4}{\Nbar \pbar_N} + \kappa_{4} e^{-\Nbar \pbar_N}.
	\end{align*}
	The result now follows from Gronwall's lemma. \qed

	\begin{remark}
		\label{rem:remsec6}
		With a slight modification to the proof it can be shown that
		if one assumes diffusion coefficients $\{\sigma_{\alpha\gamma}\}_{\alpha,\gamma \in \Kbd}$ to be constants, then the estimate in \eqref{eq:NpN_better} holds.
		To see this,  first note that \eqref{eq:NpN_1}--\eqref{eq:TN2_alpha} are still valid with each second moment replaced by first moment.
		Combining \eqref{eq:TN2_gamma} and \eqref{eq:TN2_alpha}, using Cauchy-Schwarz inequality as in \eqref{eq:TN2_temp}, applying Lemma \ref{lem:prep_2} and \eqref{eq:ZiXi_bd}, one can argue \eqref{eq:TN2_bd} holds with second moment replaced by first moment.
		Also one can apply Cauchy-Schwarz inequality to get estimate of first moment from \eqref{eq:TN3_bd} and \eqref{eq:TN4_bd}.
		The desired result then follows once more from an application of Gronwall's lemma.	
	\end{remark}


\section{Proof of Theorem \ref{thm:CLT}} \label{sec:pf_CLT}
In this section we prove Theorem \ref{thm:CLT}. Conditions \ref{cond:coefficients} and \ref{cond:cond2}
will be in force throughout the section and thus will not be noted explicitly in the statement of various results.
For $N \in \Nmb$, let $\Omega_N, \Pmb^N, V_*, V^i, i \in \Nbd, \nu$ be as in Section \ref{sec:canonical_processes}.
For $t \in [0,T]$ and $i \in \Nbd$, define 
\begin{gather*}
	J^N(t) \doteq J^{N,1}(t) - \frac{1}{2} J^{N,2}(t), \quad \mutil^{i,N}_t \doteq \frac{1}{N_i(t)} \sum_{j=1}^N \xi_{ij}(t) \delta_{X^j_t},
\end{gather*}
where
\begin{equation} \label{eq:JN1}
	J^{N,1}(t) \doteq \sum_{i=1}^N \int_0^t \left( b(X^i_s,\mutil^{i,N}_s) - b(X^i_s,\mu_s) \right) \cdot d W^i_s
\end{equation}
and
\begin{equation} \label{eq:JN2}
	J^{N,2}(t) \doteq \sum_{i = 1}^N \int_0^t \left\| b(X^i_s,\mutil^{i,N}_s) - b(X^i_s,\mu_s) \right\|^2 \, ds.
\end{equation}
Let $\Fmctil^N_t \doteq \sigma\{V^i(s), \xi_{ij}(s),  0 \le s \le t, i, j \in \Nbd \}$.
Note that $\left\{\exp\left(J^N(t)\right)\right\}$ is an $\{\Fmctil^N_t\}$-martingale under $\Pmb^N$.
Define a new probability measure $\Qmb^N$ on $\Omega_N$ by 
\begin{equation*}
	\frac{d\Qmb^N}{d\Pmb^N} \doteq \exp\left(J^N(T)\right).
\end{equation*}

By Girsanov's Theorem, $(X^1, \dotsc, X^N, \{\xi_{ij} : i,j \in \Nbd \})$ has the same probability distribution under $\Qmb^N$ as $(Z^{1,N}, \dotsc, Z^{N,N}, \{\xi_{ij} : i,j \in \Nbd \})$ under $\Pbd$.
For $\phi \in L^2_c(\Cmc_d,\mu)$, let 
\begin{align}\label{eq:eq1231}\etatil^N (\phi) \doteq \frac{1}{\sqrt{N}} \sum_{i = 1}^N \phi (X^i).
\end{align}
Thus in order to prove the theorem it suffices to show that for any $\phi \in L^2_c(\Cmc_d,\mu)$,
\begin{equation*}
	\lim_{N \to \infty} \Ebf_{\Qmb^N} \exp \left( i \etatil^N(\phi) \right) = \exp \left( -\frac{1}{2} \left\| (I-A)^{-1} \phibd \right\|^2_{L^2(\Omega_d,\nu)} \right),
\end{equation*}
which is equivalent to showing
\begin{equation} \label{eq:cf_cvg}
	\lim_{N \to \infty} \Ebf_{\Pmb^N} \exp \left( i \etatil^N(\phi) + J^{N,1}(T) - \frac{1}{2} J^{N,2}(T) \right) = \exp \left( -\frac{1}{2} \left\| (I-A)^{-1} \phibd \right\|^2_{L^2(\Omega_d,\nu)} \right).
\end{equation}
For this we will need to study the asymptotics of $J^{N,1}$ and $J^{N,2}$ as $N \to \infty$.


\subsection{Asymptotics of symmetric statistics} \label{sec:asymp_symmetric_statistics}
The proof of \eqref{eq:cf_cvg} crucially relies on certain classical results from \cite{Dynkin1983} on limit laws of degenerate symmetric statistics.
In this section we briefly review these results.

Let $\Smb$ be a Polish space and let $\{Y_n\}_{n=1}^\infty$ be a sequence of i.i.d.\ $\Smb$-valued random variables having common probability law $\theta$.
For $k \in \Nmb$, let $L^2(\theta^{\otimes k})$ be the space of all real-valued square integrable functions on $(\Smb^k, \Bmc(\Smb)^{\otimes k}, \theta^{\otimes k})$.
Denote by $L^2_c(\theta^{\otimes k})$ the subspace of centered functions, namely $\phi \in L^2(\theta^{\otimes k})$ such that for all $1 \le j \le k$,
\begin{equation*}
	\int_\Smb \phi(x_1,\dotsc,x_{j-1},x,x_{j+1},\dotsc,x_k) \, \theta(dx) = 0, \quad \theta^{\otimes k-1} \text{ a.e. } (x_1,\dotsc,x_{j-1},x_{j+1},\dotsc,x_k).
\end{equation*}
Denote by $L^2_{sym}(\theta^{\otimes k})$ the subspace of symmetric functions, namely $\phi \in L^2(\theta^{\otimes k})$ such that for every permutation $\pi$ on $\{1,\dotsc,k\}$,
\begin{equation*}
	\phi(x_1,\dotsc,x_k) = \phi(x_{\pi(1)},\dotsc,x_{\pi(k)}), \quad \theta^{\otimes k} \text{ a.e. } (x_1,\dotsc,x_k).
\end{equation*}
Also, denote by $L^2_{c,sym}(\theta^{\otimes k})$ the subspace of centered symmetric functions in $L^2(\theta^{\otimes k})$, namely $L^2_{c,sym}(\theta^{\otimes k}) \doteq L^2_c(\theta^{\otimes k}) \bigcap L^2_{sym}(\theta^{\otimes k})$.
Given $\phi_k \in L^2_{sym}(\theta^{\otimes k})$ define the symmetric statistic $\Umc^n_k (\phi_k)$ as
\begin{equation*}
	\Umc^n_k (\phi_k) \doteq
	\begin{cases}
		\displaystyle \sum_{1 \le i_1 < i_2 < \dotsb < i_k \le n} \phi_k(Y_{i_1},\dotsc,Y_{i_k}) & \text{for } n \ge k \\
		0 & \text{for } n<k.
	\end{cases}
\end{equation*}
In order to describe the asymptotic distributions of such statistics consider a Gaussian field $\{I_1(h) : h \in L^2(\theta)\}$ such that
\begin{equation*}
	\Ebf \left( I_1(h) \right) = 0, \: \Ebf \left( I_1(h)I_1(g) \right) = \langle h,g \rangle_{L^2(\theta)}, \quad h,g \in L^2(\theta).
\end{equation*}
For $h \in L^2(\theta)$, define $\phi_k^h \in L^2_{sym}(\theta^{\otimes k})$ as
\begin{equation*}
	\phi_k^h(x_1,\dotsc,x_k) \doteq h(x_1) \dotsc h(x_k)
\end{equation*}
and set $\phi_0^h \doteq 1$.

The multiple Wiener integral (MWI) of $\phi_k^h$, denoted as $I_k(\phi_k^h)$, is defined through the following formula.
For $k \ge 1$,
\begin{equation*} 
	I_k(\phi_k^h) \doteq \sum_{j=0}^{\lfloor k/2 \rfloor} (-1)^j C_{k,j} ||h||^{2j}_{L^2(\theta)} (I_1(h))^{k-2j}, \text{ where } C_{k,j} \doteq \frac{k!}{(k-2j)! 2^j j!}, j=0,\dotsc,\lfloor k/2 \rfloor.
\end{equation*}
The following representation gives an equivalent way to characterize the MWI of $\phi_k^h $:
\begin{equation*}
	\sum_{k=0}^{\infty} \frac{t^k}{k!} I_k(\phi_k^h) = \exp \left( tI_1(h) - \frac{t^2}{2} ||h||^2_{L^2(\theta)} \right), \quad t \in \Rmb,
\end{equation*}
where we set $I_0(\phi_0^h) \doteq 1$. 
We extend the definition of $I_k$ to the linear span of $\{\phi_k^h, h \in L^2(\theta)\}$ by linearity. 
It can be checked that for all $f$ in this linear span,
\begin{equation} \label{eq:MWI_moments}
	\Ebf(I_k(f))^2 = k! \, ||f||^2_{L^2(\theta^{\otimes k})}.
\end{equation}
Using this identity and standard denseness arguments, the definition of $I_k(f)$ can be extended to all $f \in L^2_{sym}(\theta^{\otimes k})$ and the identity $\eqref{eq:MWI_moments}$ holds for all $f \in L^2_{sym}(\theta^{\otimes k})$. 
The following theorem is taken from \cite{Dynkin1983}.

\begin{theorem}[Dynkin-Mandelbaum \cite{Dynkin1983}] \label{thm:Dynkin}
	Let $\{ \phi_k \}_{k=1}^\infty$ be such that $\phi_k \in L^2_{c,sym}(\theta^{\otimes k})$ for each $k \ge 1$. 
	Then the following convergence holds as $n \to \infty$:
	\begin{equation*}
		\left( n^{-\frac{k}{2}} \Umc^n_k (\phi_k) \right)_{k \ge 1} \Rightarrow \left( \frac{1}{k!} I_k(\phi_k) \right)_{k \ge 1}
	\end{equation*}
	as a sequence of $\Rmb^\infty$-valued random variables.
\end{theorem}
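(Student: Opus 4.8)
The statement concerns convergence in $\Rmb^\infty$ with the product topology, so it suffices to prove joint (finite-dimensional) convergence of $\bigl(n^{-k/2}\Umc^n_k(\phi_k)\bigr)_{1\le k\le K}$ for each fixed $K\in\Nmb$. The plan has two ingredients: (i) a uniform-in-$n$ $L^2$ bound on $n^{-k/2}\Umc^n_k$ that matches the multiple Wiener integral isometry \eqref{eq:MWI_moments}, which together with multilinearity reduces the problem to rank-one kernels $\phi_k^h$ with $h\in L^2_c(\theta)$; and (ii) the analysis of the rank-one case via an elementary generating function together with the classical CLT and LLN.

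For (i), note that since $\phi_k$ is centered in each of its $k$ variables, for two distinct strictly increasing $k$-tuples $\mathbf{i}\ne\mathbf{j}$ in $\{1,\dots,n\}$ some index lies in exactly one of them, and independence of the $Y_i$ forces $\Ebf[\phi_k(Y_{\mathbf{i}})\phi_k(Y_{\mathbf{j}})]=0$; hence $\Ebf\bigl(n^{-k/2}\Umc^n_k(\phi_k)\bigr)^2=\binom{n}{k}n^{-k}\|\phi_k\|_{L^2(\theta^{\otimes k})}^2\le\tfrac1{k!}\|\phi_k\|_{L^2(\theta^{\otimes k})}^2$, which is exactly $\Ebf\bigl(\tfrac1{k!}I_k(\phi_k)\bigr)^2$ by \eqref{eq:MWI_moments}. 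Thus $\phi_k\mapsto n^{-k/2}\Umc^n_k(\phi_k)$ and $\phi_k\mapsto\tfrac1{k!}I_k(\phi_k)$ are Lipschitz from $L^2_{c,sym}(\theta^{\otimes k})$ into the respective $L^2$ spaces, with constant $\le(k!)^{-1/2}$ uniformly in $n$. Since $L^2(\theta)$ is separable ($\Smb$ is Polish), polarization applied to an orthonormal basis of $L^2_c(\theta)$ shows that finite linear combinations of the product kernels $\phi_k^h$, $h\in L^2_c(\theta)$, are dense in $L^2_{c,sym}(\theta^{\otimes k})$; by multilinearity of both $\Umc^n_k$ and $I_k$ it therefore suffices to prove the joint convergence $\bigl(n^{-k/2}\Umc^n_k(\phi_k^{h^{(l)}})\bigr)_{1\le k\le K,\,1\le l\le m}\Rightarrow\bigl(\tfrac1{k!}I_k(\phi_k^{h^{(l)}})\bigr)_{k,l}$ for every finite family $h^{(1)},\dots,h^{(m)}\in L^2_c(\theta)$.

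For (ii), first take the $h^{(l)}$ bounded. Then $\Umc^n_k(\phi_k^{h^{(l)}})=e_k\bigl(h^{(l)}(Y_1),\dots,h^{(l)}(Y_n)\bigr)$ is the elementary symmetric polynomial, so by Newton's identities $n^{-k/2}\Umc^n_k(\phi_k^{h^{(l)}})$ is a fixed polynomial in the rescaled power sums $P^{(l)}_{n,j}\doteq n^{-j/2}\sum_{i\le n}h^{(l)}(Y_i)^j$, $1\le j\le k$. By the multivariate CLT $\bigl(P^{(l)}_{n,1}\bigr)_l\Rightarrow\bigl(I_1(h^{(l)})\bigr)_l$ (a Gaussian vector with covariance $\langle h^{(l)},h^{(l')}\rangle_{L^2(\theta)}$), by the SLLN $P^{(l)}_{n,2}\to\|h^{(l)}\|_{L^2(\theta)}^2$, and $P^{(l)}_{n,j}\to0$ in $L^2$ for $j\ge3$; the continuous mapping theorem then gives joint convergence of the whole array. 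The limiting polynomial is identified with $\tfrac1{k!}I_k(\phi_k^{h^{(l)}})$ by matching Newton's identities for $e_k$ against the recursion characterizing the multiple Wiener integrals (equivalently, both limit objects have generating function $\exp\bigl(zI_1(h^{(l)})-\tfrac{z^2}{2}\|h^{(l)}\|_{L^2(\theta)}^2\bigr)$, using the generating-function definition of $I_k(\phi_k^h)$ recalled in the excerpt). A general $h\in L^2_c(\theta)$ is reached by truncation: with $h_M\doteq h\one_{\{|h|\le M\}}-\Ebf[h\one_{\{|h|\le M\}}]$ one has $h_M\in L^2_c(\theta)$ bounded and $\|\phi_k^{h_M}-\phi_k^h\|_{L^2(\theta^{\otimes k})}\to0$ as $M\to\infty$, so the uniform Lipschitz bounds of (i) let the limits $n\to\infty$ and $M\to\infty$ be exchanged; feeding the conclusion back through the density reduction in (i) finishes the proof.

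The step I expect to be the main obstacle is precisely this interchange of $n\to\infty$ with the $L^2$-approximations appearing in both (i) and (ii): neither the truncation of $h$ nor the rank-one approximation of $\phi_k$ can be carried out termwise, because the kernels are only square-integrable (e.g.\ the rescaled power sums $n^{-j/2}\sum_i h(Y_i)^j$ for $j\ge3$ need not even be integrable). What rescues the argument is the exact agreement between the Hoeffding-type variance identity $\Ebf(n^{-k/2}\Umc^n_k(\phi_k))^2\le\tfrac1{k!}\|\phi_k\|_{L^2(\theta^{\otimes k})}^2$ and the Wiener-integral isometry \eqref{eq:MWI_moments}: it bounds the approximation error on both sides by the same quantity, uniformly in $n$, so one may pass to the weak limit via the triangle inequality for a metric for weak convergence on $\Rmb^{K}$. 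A minor additional point is that for bounded $h$ the generating-function/Newton's-identity identification must be verified, but this is a purely algebraic computation once the CLT/LLN limits of the power sums are in hand.
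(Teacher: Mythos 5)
The paper does not prove this statement; it is quoted verbatim as a result of Dynkin and Mandelbaum and cited to \cite{Dynkin1983} without a proof, so there is nothing in the paper to compare line by line. What you have written is a self-contained proof of the cited theorem, and I believe it is essentially correct. Your approach is the ``elementary symmetric polynomials via Newton's identities'' route (reduce by the exact Hoeffding orthogonality bound $\Ebf(n^{-k/2}\Umc^n_k(\phi_k))^2=\binom{n}{k}n^{-k}\|\phi_k\|^2\le\tfrac1{k!}\|\phi_k\|^2$ to rank-one kernels $\phi_k^h$ with $h$ bounded, recognize $\Umc^n_k(\phi_k^h)=e_k(h(Y_1),\dots,h(Y_n))$, express it as a fixed polynomial in the rescaled power sums, identify the limit polynomial with $\tfrac{1}{k!}I_k(\phi_k^h)$ via the generating-function definition of $I_k$, then undo the approximations using the uniform-in-$n$ $L^2$ bound). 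This is genuinely different from the argument in \cite{Dynkin1983}, which proceeds by Poissonization: one first establishes the analogous limit for symmetric statistics of a Poisson sample using the algebra of multiple Wiener--Itô integrals with respect to a compensated Poisson measure, and then de-Poissonizes. The Poisson route is shorter once the Fock-space machinery is in place and also yields the Poisson-sample version of the result as a byproduct; your route is more elementary and closer to what one sees in the classical U-statistics literature, at the cost of the extra truncation step (needed because, as you note, $n^{-j/2}\sum_i h(Y_i)^j$ for $j\ge3$ is not controlled for unbounded $h\in L^2$). The one place where you should spell out a few more words if this were to appear in print is the double-limit interchange: it follows from your matching $L^2$ bounds on both sides together with a standard approximation lemma (e.g.\ Billingsley's Theorem~3.2 or a Lévy--Prokhorov / $3\varepsilon$ argument), and you correctly identify this as the crux, but the argument is only sketched.
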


\subsection{Completing the proof of Theorem \ref{thm:CLT}} \label{sec:complete_pf_CLT}

From Condition \ref{cond:cond2} for any subsequence of $\{p_N\}$ there is a further subsequence and a $p \in \Cmb([0,T]:\Rmb)$ such that, along the subsequence, $p_N$ converges to $p$.
In order to prove Theorem \ref{thm:CLT} it suffices to prove the statement in the theorem along every such subsequence. Thus, without loss of generality, we will assume that for some
$p \in \Cmb([0,T]:\Rmb)$,
\begin{equation}
	\label{eq:eq416}
	p_N \to p,\; \bar p \doteq \inf_{0\le s \le T} p(s) >0 \mbox{ and } p(s) \in [0,1] \mbox{ for all } s \in [0,T].
\end{equation}

Let $(\Omega^*,\Fmc^*, \Pbd^*)$ be some probability space on which we are given
\begin{enumerate}[$(a)$]
\item
	 a Gaussian random field $\{I_1(h): h \in L^2(\nu)\}$ as in Section \ref{sec:asymp_symmetric_statistics} with $\Smb = \Omega_d$ and $\theta = \nu$ (see Section \ref{sec:canonical_processes}).
\item
	a Gaussian random variable $Z$ with  zero mean and variance
	\begin{equation} \label{eq:sigmasq}
		\sigma^2 \doteq \int_0^T \frac{1-p(s)}{p(s)} \lambda_s \, ds,
	\end{equation}
	\end{enumerate}
	where $\lambda_t$ is defined in \eqref{eq:lambdas}, such that
	$Z$ is independent of $\{I_1(h): h \in L^2(\nu)\}$. Note that \eqref{eq:eq416} implies $\sigma^2 < \infty$.
	Define on this probability space multiple Wiener integrals $I_k(f)$ for $k \ge 1$ and $f \in L^2_{sym}(\nu^{\otimes k})$
	as in Section \ref{sec:asymp_symmetric_statistics}. Note that the collection
	$\{I_k(f): k\ge 1, f \in L^2_{sym}(\nu^{\otimes k})\}$ is independent of $Z$.
Recall functions $h$ and $\phibd$ (corresponding to each $\phi \in L^2_c(\Cmc_d,\mu)$) defined in Sections \ref{sec:some_integral_operators} and \ref{sec:CLT}. 
Define 
\begin{align}
	h^{sym}(\omega,\omega') & \doteq \half(h(\omega,\omega') + h(\omega',\omega)), \quad (\omega,\omega') \in \Omega_d^2 \label{eq:h_sym} \\
	m_t(x,y) & \doteq \int_{\Rmb^d} \bbar_t(z,x) \cdot \bbar_t(z,y) \, \mu_t(dz), \quad x,y \in \Rd, t \in [0,T] \label{eq:m_t} \\
	l(\omega,\omega') & \doteq \int_0^T m_s(X_{*,s}(\omega),X_{*,s}(\omega')) \, ds, \quad (\omega,\omega') \in \Omega_d^2. \label{eq:l}
\end{align}
In order to prove \eqref{eq:cf_cvg} (which will complete the proof of Theorem \ref{thm:CLT}), we will make use of the following key proposition, whose proof will be given in Section \ref{sec:pf_prop_key}. Recall $J^{N,1}$ and $J^{N,2}$
defined in \eqref{eq:JN1} and \eqref{eq:JN2} respectively. Also recall $\etatil^N(\phi)$ associated with
a $\phi \in  L^2_c(\Cmc_d,\mu)$ introduced in \eqref{eq:eq1231}.

\begin{proposition}
	\label{prop:key_joint_cvg}
	Let $\phi \in  L^2_c(\Cmc_d,\mu)$. Then,
	as $N \to \infty$,
	\begin{equation}
		\label{eq:joint_cvg}
		(\etatil^N(\phi), J^{N,1}(T), J^{N,2}(T)) \Rightarrow \left(I_1(\phibd), Z + I_2(h^{sym}), I_2(l) + \int_0^T \frac{1}{p(s)} \lambda_s \, ds \right).
	\end{equation}
\end{proposition}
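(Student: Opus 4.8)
The plan is to reduce each of the three coordinates of \eqref{eq:joint_cvg}, up to an error vanishing in probability, to an explicit functional of the i.i.d.\ sequence $\{V^i=(W^i,X^i)\}_{i\ge1}$ and the independent edge family $\{\xi_{ij}\}$, and then to take limits by conditioning on $\Gmc^N\doteq\sigma\{V^i:1\le i\le N\}$: the $\Gmc^N$-measurable pieces are handled by the limit theory for degenerate symmetric statistics of Section~\ref{sec:asymp_symmetric_statistics}, and the single remaining $\xi$-driven piece by a conditional central limit theorem for sums of independent edge contributions. The first coordinate is immediate, being a centered first-order symmetric statistic: $\etatil^N(\phi)=N^{-1/2}\sum_{i=1}^N\phibd(V^i)$ with $\phibd\in L^2(\Omega_d,\nu)$ centered, so Theorem~\ref{thm:Dynkin} (with $\Smb=\Omega_d$, $\theta=\nu$) gives $\etatil^N(\phi)\Rightarrow I_1(\phibd)$.

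For $J^{N,2}(T)$, using $\sum_j\xi_{ij}(s)/N_i(s)=1$ one gets $b(X^i_s,\mutil^{i,N}_s)-b(X^i_s,\mu_s)=N_i(s)^{-1}\sum_j\xi_{ij}(s)\bbar_s(X^i_s,X^j_s)$, so that
\[
 J^{N,2}(T)=\sum_{i=1}^N\int_0^T\frac{1}{N_i(s)^2}\sum_{j,k}\xi_{ij}(s)\xi_{ik}(s)\,\bbar_s(X^i_s,X^j_s)\cdot\bbar_s(X^i_s,X^k_s)\,ds .
\]
The diagonal ($j=k$) part equals $\sum_i\int_0^T N_i(s)^{-1}\langle\|\bbar_s(X^i_s,\cdot)\|^2,\mutil^{i,N}_s\rangle\,ds$; replacing $N_i(s)^{-1}$ by $(Np(s))^{-1}$ via the binomial concentration estimates of Section~\ref{sec:pre_results} and applying the law of large numbers, it converges in probability to the deterministic constant $\int_0^T\lambda_s/p(s)\,ds$. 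In the off-diagonal ($j\ne k$) part one replaces $\xi_{ij}(s)\xi_{ik}(s)/N_i(s)^2$ by its conditional mean (of order $N^{-2}$) with a vanishing $L^1$ error — this is the role of the reduction and error estimates, Lemmas~\ref{lem:Ttil} and~\ref{lem:tiljn2} — which reduces that part, up to $o_P(1)$, to $N^{-2}\sum_i\sum_{j\ne k}\int_0^T\bbar_s(X^i_s,X^j_s)\cdot\bbar_s(X^i_s,X^k_s)\,ds$; averaging over $i$ by the law of large numbers turns this into a degenerate order-$2$ symmetric statistic of the $V^i$ with centered symmetric kernel $l$ of \eqref{eq:l} (centeredness from $\int\bbar_s(x,y)\,\mu_s(dy)=0$), whose limit by Theorem~\ref{thm:Dynkin} is $I_2(l)$. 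The structural point is that the edge fluctuations about their means affect $J^{N,2}$ only at lower order, so this coordinate produces no extra independent Gaussian.

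The martingale coordinate $J^{N,1}(T)$ is the delicate one. By the reduction of Lemma~\ref{lem:JN1_JN1_tilde}, $J^{N,1}(T)-\Jtil^{N,1}(T)\to0$ in probability, with $\Jtil^{N,1}(T)$ an incomplete-$U$-statistic-valued stochastic integral. Writing $\xi_{ij}(s)/N_i(s)=N^{-1}+(Np(s))^{-1}(\xi_{ij}(s)-p(s))$ plus lower-order terms, $\Jtil^{N,1}(T)$ splits into a complete-graph piece $N^{-1}\sum_{i,j}h(V^i,V^j)$, whose diagonal sum $N^{-1}\sum_i h(V^i,V^i)$ is $o_P(1)$ (a sum of i.i.d.\ mean-zero terms) and whose off-diagonal part is a degenerate order-$2$ symmetric statistic with centered symmetric kernel $h^{sym}$ of \eqref{eq:h_sym}, hence converges to $I_2(h^{sym})$ by Theorem~\ref{thm:Dynkin}; plus an edge-fluctuation piece $\sum_i\int_0^T N^{-1}p(s)^{-1}\sum_j(\xi_{ij}(s)-p(s))\bbar_s(X^i_s,X^j_s)\cdot dW^i_s+o_P(1)$. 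Conditionally on $\Gmc^N$, this last term is a sum of $\binom N2$ independent mean-zero quantities indexed by the edge processes, asymptotically Gaussian by Lindeberg--Feller, with conditional variance — an average over $i$ of bounded terms — converging by the law of large numbers to the \emph{deterministic} constant $\sigma^2=\int_0^T\frac{1-p(s)}{p(s)}\lambda_s\,ds$ of \eqref{eq:sigmasq} ($\frac{1-p(s)}{p(s)}$ being the normalized Bernoulli$(p(s))$ variance; note $\sigma^2+\int_0^T\lambda_s\,ds=\int_0^T\lambda_s/p(s)\,ds$); this is the content packaged in Lemma~\ref{lem:key}. To assemble the triple I would evaluate $\Ebf\exp(it_1\etatil^N(\phi)+it_2 J^{N,1}(T)+it_3 J^{N,2}(T))$ by conditioning on $\Gmc^N$: the pieces $\etatil^N(\phi)$, the complete-graph parts of $J^{N,1}$ and $J^{N,2}$, and the deterministic shift are $\Gmc^N$-measurable up to $o_P(1)$ and jointly converge to $(I_1(\phibd),I_2(h^{sym}),I_2(l)+\int_0^T\lambda_s/p(s)\,ds)$ by the already-joint Theorem~\ref{thm:Dynkin}, whereas the edge-fluctuation piece contributes the conditional factor $e^{-t_2^2\sigma^2/2}$ with $\sigma^2$ non-random; multiplying gives exactly the right-hand side of \eqref{eq:joint_cvg}, with $Z$ a centered Gaussian of variance $\sigma^2$ independent of the multiple Wiener integrals.

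\textbf{Main obstacle.} The technical core is the $\xi$-driven fluctuation analysis inside the \emph{stochastic integral} $J^{N,1}(T)$: proving $J^{N,1}(T)-\Jtil^{N,1}(T)\to0$, that $\Jtil^{N,1}(T)$ genuinely has incomplete-$U$-statistic structure, and that the extra noise it carries is asymptotically Gaussian with deterministic variance and asymptotically independent of the particle data. This amounts to extending Janson's central limit theorem for incomplete $U$-statistics to the situation where the incompleteness sits inside the integrand of a martingale, carried out \emph{jointly} with the degenerate order-$2$ symmetric statistic living in the same object, so that the cross-dependence with $J^{N,2}(T)$ (which shares that integrand) is tracked correctly. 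A secondary difficulty is the uniform-in-$s\in[0,T]$ control of the random denominators $N_i(s)^{-1}$ and of their replacements $(Np(s))^{-1}$, for which the concentration estimates of Section~\ref{sec:pre_results} and Condition~\ref{cond:cond2} ($\liminf_N\pbar_N>0$ and precompactness of $\{p_N\}$) are indispensable.
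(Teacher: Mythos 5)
Your proposal is correct and follows essentially the same route as the paper: reduce $J^{N,1}$ and $J^{N,2}$ via Lemmas \ref{lem:JN1_JN1_tilde} and \ref{lem:tiljn2} to $U_N$ plus degenerate order-two symmetric statistics with kernels $h^{sym}$ and $l$ plus the deterministic constant, then condition on the $V^i$'s and combine a conditional (Lyapunov/Lindeberg) CLT for the edge-fluctuation sum $U_N$ with Dynkin--Mandelbaum for the symmetric statistics — which is precisely the content and proof of Lemma \ref{lem:key}. The only differences are cosmetic (you invoke Lindeberg--Feller where the paper uses Lyapunov, and you note the helpful identity $\sigma^2+\int_0^T\lambda_s\,ds=\int_0^T\lambda_s/p(s)\,ds$).
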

We now complete the proof of \eqref{eq:cf_cvg} using the above proposition.
It follows from the  proposition that as $N \to \infty$,
\begin{equation}
	\left(\etatil^N(\phi), J^N(T)\right) \Rightarrow \left(I_1(\phibd), \frac{1}{2} I_2(f) + Z - \int_0^T \frac{1}{2p(s)} \lambda_s \, ds\right) \doteq (I_1(\phibd), J),\label{eq:eq1250}
\end{equation}
where $f$ is defined as
\begin{equation*}
	f(\omega,\omega') \doteq h(\omega,\omega') + h(\omega',\omega) - l(\omega,\omega'), \quad (\omega,\omega') \in \Omega_d^2.
\end{equation*}
Recalling the formula for $\textnormal{Trace}(AA^*)$ established in Lemma \ref{lem:trace}, the definition of
$\sigma^2$ in \eqref{eq:sigmasq}, and using independence between $Z$ and $\{I_k(\cdot)\}_{k \ge 1}$, we have
\begin{equation*}
	\Ebf_{\Pbd^*} \exp(J) = \Ebf_{\Pbd^*} \exp \left( \frac{1}{2} I_2(f) - \frac{1}{2} \textnormal{Trace}(AA^*) \right) \Ebf_{\Pbd^*} \exp \left( Z - \half \sigma^2 \right) = 1,
\end{equation*}
where the last equality follows from Lemma \ref{lem:Shiga_Tanaka} in Appendix \ref{sec:restating}.
Since $\EbfP \exp(J^N(T)) = 1$ for all $N$, we have from Scheffe's lemma that $\{\exp(J^N(T))\}$ is uniformly integrable and consequently so is $\{\exp(i\etatil^N(\phi) + J^N(T))\}$.
Hence, from \eqref{eq:eq1250}
  we have that as $N \to \infty$,
\begin{align*}
	& \quad \EbfP \exp \left( i\etatil^N(\phi) +J^N(T) \right) \\
	& \to \Ebf_{\Pbd^*} \exp \left(iI_1(\phibd) + J \right) \\
	& = \Ebf_{\Pbd^*} \exp \left( iI_1(\phibd) + \frac{1}{2} I_2(f) - \frac{1}{2} \textnormal{Trace}(AA^*) \right) \Ebf_{\Pbd^*} \exp \left( Z - \half \sigma^2 \right) \\
	& = \exp \left( -\frac{1}{2} \left\| (I-A)^{-1} \phibd \right\|^2_{L^2(\Omega_d,\nu)} \right),
\end{align*}
where the first equality uses the independence between $Z$ and $\{I_k(\cdot)\}_{k \ge 1}$ and the last equality again follows from Lemma \ref{lem:Shiga_Tanaka}.
Thus we have shown \eqref{eq:cf_cvg}, which completes the proof of Theorem \ref{thm:CLT}. \qed


\subsection{Proof of Proposition \ref{prop:key_joint_cvg}} \label{sec:pf_prop_key}

In this section we prove  Proposition \ref{prop:key_joint_cvg}.
We will first reduce $J^{N,1}(T)$ and $J^{N,2}(T)$ to forms that are more convenient to analyze.

\subsubsection{Reducing $J^{N,1}$ and $J^{N,2}$}
\label{sec:reducing}

The term $J^{N,1}(T)$ in \eqref{eq:JN1} can be written as
\begin{align*}
	J^{N,1}(T) & = \sum_{i=1}^N \int_0^T \left( b(X^i_s,\mutil^{i,N}_s) - b(X^i_s,\mu_s) \right) \cdot d W^i_s \\
	& = \sum_{i=1}^N \int_0^T \left( \frac{1}{N_i(s)} \sum_{j=1}^N \xi_{ij}(s) \bbar(X^i_s,X^j_s) - b(X^i_s,\mu_s) \right) \cdot d W^i_s \\
	& = \sum_{i=1}^N \sum_{j=1}^N \int_0^T \frac{\xi_{ij}(s)}{N_i(s)} \bbar_s(X^i_s,X^j_s) \cdot d W^i_s,
\end{align*}
where $\bbar_s$ is defined in \eqref{eq:bbar t}.
Let
\begin{equation} \label{eq:JN1til}
	\Jtil^{N,1}(T) \doteq \sum_{i=1}^N \sum_{j=1}^N \int_0^T \frac{\xi_{ij}(s)}{N p_N(s)} \bbar_s(X^i_s,X^j_s) \cdot d W^i_s.
\end{equation}

We will argue in Lemma \ref{lem:JN1_JN1_tilde} that the asymptotic behavior of $J^{N,1}$ is the same as that of $\Jtil^{N,1}(T)$, the proof of which relies on the following lemma.

\begin{lemma} \label{lem:approxNi}
	As $N \to \infty$,
	\begin{equation*}
		\sup_{s \in [0,T]} \EbfP \left[ \frac{N^2}{N_1^2(s)} - \frac{1}{p_N^2(s)} \right]^2 \to 0, \quad \sup_{s \in [0,T]} \EbfP \left[ \frac{N}{N_1(s)} - \frac{1}{p_N(s)} \right]^2 \to 0.
	\end{equation*}
\end{lemma}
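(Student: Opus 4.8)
The plan is to combine the polynomial tail bound of Lemma~\ref{lem:prep_4} with the uniform positivity of the edge probability. Since $\liminf_{N\to\infty}\pbar_N>0$ (Condition~\ref{cond:cond2}; see also \eqref{eq:eq416}), there are $c_0\in(0,1)$ and $N_0\in\Nmb$ with $p_N(s)\ge c_0$ for all $s\in[0,T]$ and $N\ge N_0$. With $K=1$ we have $N_1(s)=1+\sum_{j=2}^N\xi_{1j}(s)$, which is precisely the variable $Y$ of Lemma~\ref{lem:prep_4} with the parameter ``$p_N$'' there equal to $p_N(s)$; in particular $N_1(s)\ge 1$ always. Writing
\[
	\frac{N}{N_1(s)}-\frac{1}{p_N(s)}=\frac{Np_N(s)-N_1(s)}{N_1(s)\,p_N(s)},\qquad
	\frac{N^2}{N_1^2(s)}-\frac{1}{p_N^2(s)}=\Big(\frac{N}{N_1(s)}-\frac{1}{p_N(s)}\Big)\Big(\frac{N}{N_1(s)}+\frac{1}{p_N(s)}\Big),
\]
both quantities will be controlled by controlling $Np_N(s)-N_1(s)$ on a high-probability ``good'' event and bounding everything crudely on its complement.

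First I would fix $k>0$ (its value chosen at the end) and set $G_N(s)\doteq\{|N_1(s)-Np_N(s)|\le C_N(k)+1\}$, where $C_N(k)=\sqrt{k(N-1)\log N}$ as in Lemma~\ref{lem:prep_4}. That lemma gives $\Pbd(G_N(s)^c)\le 2N^{-2k}$, uniformly in $s$. On $G_N(s)$, since $C_N(k)+1=o(N)$ and $p_N(s)\ge c_0$, for all $N$ large (uniformly in $s$, because $C_N(k)$ does not depend on $s$) one has $N_1(s)\ge\tfrac12 Nc_0$, hence $N_1(s)p_N(s)\ge\tfrac12 Nc_0^2$ and $N/N_1(s)\le 2/c_0$. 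Consequently on $G_N(s)$
\[
	\Big|\frac{N}{N_1(s)}-\frac{1}{p_N(s)}\Big|\le\frac{2(C_N(k)+1)}{Nc_0^2},\qquad
	\Big|\frac{N^2}{N_1^2(s)}-\frac{1}{p_N^2(s)}\Big|\le\frac{3}{c_0}\cdot\frac{2(C_N(k)+1)}{Nc_0^2},
\]
so that $\EbfP[(\tfrac{N}{N_1(s)}-\tfrac1{p_N(s)})^2\one_{G_N(s)}]$ and $\EbfP[(\tfrac{N^2}{N_1^2(s)}-\tfrac1{p_N^2(s)})^2\one_{G_N(s)}]$ are both $O((C_N(k)+1)^2/N^2)=O(\log N/N)$, uniformly in $s$. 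On $G_N(s)^c$ I would use only $N_1(s)\ge1$ and $p_N(s)\ge c_0$, giving $|\tfrac{N}{N_1(s)}-\tfrac1{p_N(s)}|\le N+c_0^{-1}\le 2N$ and $|\tfrac{N^2}{N_1^2(s)}-\tfrac1{p_N^2(s)}|\le N^2+c_0^{-2}\le 2N^2$ for $N$ large, so the $G_N(s)^c$-contributions $\EbfP[\,\cdot\,\one_{G_N(s)^c}]$ are at most $8N^{2-2k}$ and $8N^{4-2k}$, respectively, uniformly in $s$.

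Taking $k=2$ makes the $G_N(s)^c$-term $O(N^{-2})$ for the second displayed limit, and taking $k=3$ makes the $G_N(s)^c$-term $O(N^{-2})$ for the first displayed limit; in both cases the $G_N(s)$-term is $O(\log N/N)$. Adding the two contributions and taking $\sup_{s\in[0,T]}$ yields both limits. The only genuinely delicate point is the atypical event on which $N_1(s)$ is far below its mean, where $N/N_1(s)$ (or its square) can be as large as a power of $N$: the role of Lemma~\ref{lem:prep_4}, with $k$ taken large enough, is precisely to make the probability of that event decay faster than the offending power of $N$. Everything else is a routine split-and-estimate, with uniformity in $s$ inherited from the $s$-independence of $C_N(k)$ and of the constant $c_0$.
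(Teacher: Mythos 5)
Your proof is correct and follows essentially the same strategy as the paper: split on the high-probability event provided by Lemma~\ref{lem:prep_4}, use $p_N(s)\ge c_0$ to get an $O(\sqrt{N\log N}/N)$ bound on the good event, and choose $k$ large enough so that the crude $N^{\mathrm{power}}$ bound on the bad event is overwhelmed by the $N^{-2k}$ tail probability. The only difference is organizational: the paper establishes just the first limit and deduces the second from the identity $\bigl|\tfrac{N}{N_1(s)}-\tfrac{1}{p_N(s)}\bigr| = \bigl|\tfrac{N^2}{N_1^2(s)}-\tfrac{1}{p_N^2(s)}\bigr|/\bigl(\tfrac{N}{N_1(s)}+\tfrac{1}{p_N(s)}\bigr)\le\bigl|\tfrac{N^2}{N_1^2(s)}-\tfrac{1}{p_N^2(s)}\bigr|$, whereas you estimate both directly with separate choices of $k$; this is cosmetic.
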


\begin{proof}
	It suffices to prove the first convergence, since the second one follows from the inequality
	\begin{equation*}
		\left|\frac{N}{N_1(s)} - \frac{1}{p_N(s)}\right| = \frac{\left|\frac{N^2}{N_1^2(s)} - \frac{1}{p_N^2(s)}\right|}{\left|\frac{N}{N_1(s)} + \frac{1}{p_N(s)}\right|} \le \left|\frac{N^2}{N_1^2(s)} - \frac{1}{p_N^2(s)}\right|.
	\end{equation*}
	Recall $C_N(\cdot)$ from Lemma \ref{lem:prep_4}.
	For $s \in [0,T]$, consider the event $G_N(s) \doteq \{ \omega \in \Omega_N : |N_1(s) - Np_N(s)| > C_N(3) + 1 \}$.
	It follows from Lemma \ref{lem:prep_4} that $\Pmb^N(G_N(s)) \le \frac{2}{N^6}$.
	Write
	\begin{equation} \label{eq:approxNi}
		\left[ \frac{N^2}{N_1^2(s)} - \frac{1}{p_N^2(s)} \right]^2 = \frac{(N_1^2(s) - N^2p_N^2(s))^2}{N_1^4(s) p_N^4(s)} \one_{G_N(s)} + \frac{(N_1^2(s) - N^2p_N^2(s))^2}{N_1^4(s) p_N^4(s)} \one_{G_N^c(s)}.
	\end{equation}
	Noting that $| N_1^2(s) - N^2 p_N^2(s)| \le N^2$ and $N_1(s) \ge 1$, we have as $N \to \infty$,
	\begin{equation} \label{eq:approxNi1}
		\EbfP \left[\frac{(N_1^2(s) - N^2p_N^2(s))^2}{N_1^4(s) p_N^4(s)} \one_{G_N(s)}\right] \le \frac{N^4}{p_N^4(s)} \Pmb^N(G_N(s)) \le \frac{2}{N^2 \pbar_N^4} \to 0,
	\end{equation}
	where the convergence follows from Condition \ref{cond:cond2}.
	Now consider the second term on the right side of \eqref{eq:approxNi}.
	Condition \ref{cond:cond2} implies that $N \pbar_N - C_N(3) - 1 > 0$ for large enough $N$.
	Hence
	\begin{equation} \label{eq:approxNi2}
		\begin{aligned} 
			\EbfP \left[\frac{(N_1^2(s) - N^2p_N^2(s))^2}{N_1^4(s) p_N^4(s)} \one_{G_N^c(s)}\right] & \le \EbfP \left[\frac{(C_N(3)+1)^2 (N_1(s)+Np_N(s))^2}{(Np_N(s) - C_N(3) - 1)^4p_N^4(s)} \one_{G_N^c(s)}\right] \\
			& \le \frac{4N^2(\sqrt{3 (N-1) \log N} + 1)^2}{(N \pbar_N - \sqrt{3 (N-1) \log N} - 1)^4 \pbar_N^4} \to 0
		\end{aligned}		
	\end{equation}
	as $N \to \infty$.
	The result follows by combining \eqref{eq:approxNi1} and \eqref{eq:approxNi2}.
\end{proof}

The following lemma says that to study the asymptotics of $J^{N,1}(T)$, it suffices to study the asymptotic behavior of $\Jtil^{N,1}(T)$.

\begin{lemma} \label{lem:JN1_JN1_tilde}
	\begin{equation*}
		\lim_{N \to \infty} \EbfP \left|J^{N,1}(T) - \Jtil^{N,1}(T)\right|^2 = 0.
	\end{equation*}
\end{lemma}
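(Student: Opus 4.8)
The plan is to compute $\EbfP|J^{N,1}(T) - \Jtil^{N,1}(T)|^2$ directly via the It\^o isometry. Using the reduced expression for $J^{N,1}(T)$ derived above and the definition \eqref{eq:JN1til} of $\Jtil^{N,1}(T)$, I would write the difference as $\sum_{i=1}^N M^i_T$, where
\[
	M^i_T \doteq \int_0^T \Big(\tfrac{1}{N_i(s)} - \tfrac{1}{N p_N(s)}\Big)\,\Big(\sum_{j=1}^N \xi_{ij}(s)\,\bbar_s(X^i_s,X^j_s)\Big)\cdot dW^i_s .
\]
Each integrand $v^i_s$ is $\{\Fmc_s\}$-adapted and bounded: $\|\bbar_s\|_\infty \le 2L$ by Condition \ref{cond:coefficients} and \eqref{eq:bbar t}, $N_i(s)\ge 1$, and $p_N(s)\ge\pbar_N$, which is bounded below for large $N$ since $\liminf_N\pbar_N>0$. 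Hence each $M^i$ is a square-integrable martingale, and since the $W^i$ are mutually independent, $\EbfP[M^i_T M^k_T]=0$ for $i\ne k$. Therefore
\[
	\EbfP\big|J^{N,1}(T) - \Jtil^{N,1}(T)\big|^2 = \sum_{i=1}^N \EbfP\int_0^T \|v^i_s\|^2\,ds .
\]

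Next I would estimate $\EbfP\|v^i_s\|^2$ by exploiting that, under $\Pmb^N$, the processes $\{X^j\}_j$ are i.i.d.\ with law $\mu$ and independent of the graph $\{\xi_{ij}\}$. Expanding the square and using this independence to integrate out the $X^j_s$ with the graph held fixed,
\[
	\EbfP\|v^i_s\|^2 = \EbfP\Big[\Big(\tfrac{1}{N_i(s)} - \tfrac{1}{N p_N(s)}\Big)^2 \sum_{j,k} \xi_{ij}(s)\xi_{ik}(s)\, c_{jk}(s)\Big], \qquad c_{jk}(s)\doteq \Ebf\big[\bbar_s(X^i_s,X^j_s)\cdot\bbar_s(X^i_s,X^k_s)\big].
\]
The key point is that $\int_{\Rmb^d}\bbar_s(x,z)\,\mu_s(dz)=0$ by \eqref{eq:bbar t}: conditioning on $X^i_s$, every off-diagonal term $j\ne k$ (including those with $j=i$ or $k=i$) has $c_{jk}(s)=0$, while for $j=k$ one has $c_{jj}(s)\le 4L^2$. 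Since $\xi_{ij}^2(s)=\xi_{ij}(s)$ and $\sum_j\xi_{ij}(s)=N_i(s)$, this gives $\EbfP\|v^i_s\|^2 \le 4L^2\,\EbfP\big[(\tfrac{1}{N_i(s)}-\tfrac{1}{Np_N(s)})^2 N_i(s)\big]$.

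To finish, I would write $(\tfrac{1}{N_i(s)}-\tfrac{1}{Np_N(s)})^2 N_i(s) = \tfrac{1}{N^2}\big(\tfrac{N}{N_i(s)}-\tfrac{1}{p_N(s)}\big)^2 N_i(s) \le \tfrac{1}{N}\big(\tfrac{N}{N_i(s)}-\tfrac{1}{p_N(s)}\big)^2$, using $N_i(s)\le N$. By exchangeability $N_i(s)\overset{d}{=}N_1(s)$, so Lemma \ref{lem:approxNi} yields $\EbfP\big[(\tfrac{1}{N_i(s)}-\tfrac{1}{Np_N(s)})^2 N_i(s)\big]\le \epsilon_N/N$ with $\epsilon_N\doteq\sup_{s\in[0,T]}\EbfP\big(\tfrac{N}{N_1(s)}-\tfrac{1}{p_N(s)}\big)^2\to 0$. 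Summing the previous displays over the $N$ indices $i$,
\[
	\EbfP\big|J^{N,1}(T) - \Jtil^{N,1}(T)\big|^2 \le \sum_{i=1}^N \int_0^T \frac{4L^2\,\epsilon_N}{N}\,ds = 4L^2 T\,\epsilon_N \longrightarrow 0 .
\]

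\textbf{Main obstacle.} The one genuine subtlety is the cancellation step: the naive bound $\big\|\sum_j\xi_{ij}(s)\bbar_s(X^i_s,X^j_s)\big\|^2 \le 4L^2 N_i^2(s)$ loses a factor $N$ and, after summing over $i$, only yields $\EbfP|J^{N,1}(T)-\Jtil^{N,1}(T)|^2 = O(1)$. It is the centering of $\bbar_s$ that rescues the estimate — it annihilates the $O(N_i^2)$ off-diagonal contribution in expectation and leaves only the $O(N_i)$ diagonal term — and Lemma \ref{lem:approxNi}, which is precisely where the non-degeneracy hypothesis $\liminf_N\pbar_N>0$ of Condition \ref{cond:cond2} is used, then supplies the required rate. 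Everything else is the It\^o isometry and routine bookkeeping.
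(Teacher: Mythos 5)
Your proof is correct and relies on exactly the same ingredients as the paper's: the It\^o isometry together with orthogonality across the independent $W^i$, the observation that the centering $\int\bbar_s(x,z)\,\mu_s(dz)=0$ kills all off-diagonal ($j\ne k$) cross-terms in the integrand, and Lemma \ref{lem:approxNi} for the rate. The only difference is organizational: the paper splits the double sum into the three cases $i<j$, $j<i$, $i=j$ and treats each stochastic integral separately, whereas you keep the full inner sum $\sum_j\xi_{ij}(s)\bbar_s(X^i_s,X^j_s)$ inside a single integrand $v^i_s$ per $i$ and expand the square of that; both variants ultimately need the same cancellation and yield the same $O(\epsilon_N)$ bound, so this is the same proof with slightly tidier bookkeeping rather than a genuinely different route.
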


\begin{proof}
	First note that as $N \to \infty$
	\begin{align*}
		& \EbfP \left| \sum_{1 \le i < j \le N} \int_0^T \frac{\xi_{ij}(s)}{N_i(s)} \bbar_s(X^i_s,X^j_s) \cdot d W^i_s - \sum_{1 \le i < j \le N} \int_0^T \frac{\xi_{ij}(s)}{N p_N(s)} \bbar_s(X^i_s,X^j_s) \cdot d W^i_s \right|^2 \\
		& \quad = \EbfP \left( \sum_{1 \le i < j \le N} \int_0^T \left(\frac{1}{N_i(s)} - \frac{1}{Np_N(s)}\right) \xi_{ij}(s) \bbar_s(X^i_s,X^j_s) \cdot d W^i_s \right)^2 \\
		& \quad = \sum_{1 \le i < j \le N} \int_0^T \left( \EbfP \left[\left(\frac{1}{N_i(s)} - \frac{1}{Np_N(s)}\right) \xi_{ij}(s)\right]^2 \EbfP \bbar_s^2(X^i_s,X^j_s) \right) \, ds \\
		& \quad \le \kappa N^2 \int_0^T \EbfP \left(\frac{1}{N_1(s)} - \frac{1}{Np_N(s)}\right)^2 \, ds \to 0,
	\end{align*}
	where the convergence follows from Lemma \ref{lem:approxNi}.
	Similarly one can show that
	\begin{gather*}
		\lim_{N \to \infty} \EbfP \left| \sum_{1 \le j < i \le N} \int_0^T \frac{\xi_{ij}(s)}{N_i(s)} \bbar_s(X^i_s,X^j_s) \cdot d W^i_s - \sum_{1 \le j < i \le N} \int_0^T \frac{\xi_{ij}(s)}{N p_N(s)} \bbar_s(X^i_s,X^j_s) \cdot d W^i_s \right|^2 = 0, \\
		\lim_{N \to \infty} \EbfP \left| \sum_{i=1}^N \int_0^T \frac{1}{N_i(s)} \bbar_s(X^i_s,X^i_s) \cdot d W^i_s - \sum_{i=1}^N \int_0^T \frac{1}{N p_N(s)} \bbar_s(X^i_s,X^i_s) \cdot d W^i_s \right|^2 = 0.
	\end{gather*} 
	Combining above results completes the proof.
\end{proof}

%
%
%

Next we study the asymptotics of $J^{N,2}(T)$. For that we will need the following lemma.

\begin{lemma} \label{lem:Ttil}
	Suppose $\gamma, \vartheta, \rho$ are bounded measurable real maps on $[0,T]\times (\Rmb^d)^3$,
	$[0,T]\times (\Rmb^d)^2$ and $[0,T]\times \Rmb^d$ respectively, such that for all $s\in [0,T]$ 
 $\gamma_s \equiv \gamma(s, \cdot) \in L^2_c(\mu_s^{\otimes 3})$, $\vartheta_s \equiv \vartheta(s, \cdot)\in L^2_c(\mu_s^{\otimes 2})$ and $\rho_s \equiv \rho(s, \cdot)\in L^2_c(\mu_s)$.
	Then as $N \to \infty$
	\begin{align}
		& \EbfP \left| \sum_{i\neq j, j\neq k, i\neq k} \int_0^T \frac{\xi_{ij}(s)\xi_{ik}(s)}{N_i^2(s)} \gamma_s(X^i_s,X^j_s,X^k_s) \, ds \right| \to 0, \label{eq:Ttilijk}\\
		& \EbfP \left| \sum_{1 \le i \ne k \le N} \int_0^T \frac{\xi_{ik}(s)}{N_i^2(s)} \vartheta_s(X^i_s,X^k_s) \, ds \right| \to 0, \label{eq:Ttilik}\\
		& \EbfP \left| \sum_{1 \le i \ne k \le N} \int_0^T \frac{\xi_{ik}(s)}{N_i^2(s)} \rho_s(X^k_s) \, ds \right| \to 0, \label{eq:Ttilk} \\
		& \EbfP \left| \sum_{1 \le i \ne k \le N} \int_0^T \frac{\xi_{ik}(s)}{N_i^2(s)} \rho_s(X^i_s) \, ds \right| \to 0 \label{eq:Ttili}.
	\end{align}
\end{lemma}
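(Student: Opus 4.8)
The plan is to establish \eqref{eq:Ttilijk}--\eqref{eq:Ttili} by one and the same second moment computation. In each of the four displays, write $\Sigma_N$ for the real-valued random sum inside $\EbfP|\cdot|$; since $\EbfP|\Sigma_N| \le (\EbfP\Sigma_N^2)^{1/2}$, it is enough to show $\EbfP\Sigma_N^2 \to 0$. I would use two structural facts repeatedly. First, under $\Pmb^N$ the particles $\{X^i\}_{i\in\Nbd}$ are i.i.d.\ with law $\mu$ and are independent of the entire random graph $\{\xi_{ij}(s): i,j\in\Nbd,\ s\in[0,T]\}$; hence any product of factors $\xi_{ij}(u)/N_i^2(u)$ is independent of any product of values of $\gamma$, $\vartheta$ or $\rho$ evaluated along the $X^i$'s. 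Second, for each fixed $u$ the degree $N_i(u) = 1 + \sum_{j\ne i}\xi_{ij}(u)$ is distributed as $1+\mathrm{Bin}(N-1,p_N(u))$, so by Lemma \ref{lem:prep_1} one has $\EbfP[N_i^{-m}(u)] \le m^m (Np_N(u))^{-m}$ for $m=2,4$, and these bounds are uniform in $N$ and $u$ because $p_N(u)\ge\pbar_N$ and $\liminf_N\pbar_N>0$ by Condition \ref{cond:cond2}.

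Expanding $\EbfP\Sigma_N^2$ as a double sum over two copies of the index tuple and over the two time variables, the independence of particles and graph lets me factor each summand as a graph expectation times a particle expectation. Since $\gamma_s\in L^2_c(\mu_s^{\otimes 3})$, $\vartheta_s\in L^2_c(\mu_s^{\otimes 2})$, $\rho_s\in L^2_c(\mu_s)$ are centered in every argument, the particle expectation is \emph{exactly zero} unless every particle path $X^\ell$ occurring in one copy's summand also occurs in the other copy's summand: otherwise one integrates out the unmatched $X^\ell$ and the term vanishes. The surviving index configurations are then enumerated case by case: for \eqref{eq:Ttilijk}, $\{i,j,k\}=\{i',j',k'\}$, giving $O(N^3)$ pairs; for \eqref{eq:Ttilik}, $\{i,k\}=\{i',k'\}$, giving $O(N^2)$; for \eqref{eq:Ttilk}, only $k=k'$ is forced while $i,i'$ vary freely, giving $O(N^3)$; for \eqref{eq:Ttili}, only $i=i'$ is forced while $k,k'$ vary freely, giving $O(N^3)$ (alternatively, in this last display one first carries out the inner sums using $\sum_{k\ne i}\xi_{ik}(s)=N_i(s)-1\le N_i(s)$, which reduces the count to $O(N)$ at the cost of a weight $N_i^{-1}$ in place of $\xi_{ik}N_i^{-2}$). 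For each surviving pair I would bound the two particle values by $\|\gamma\|_\infty^2$ (resp.\ $\|\vartheta\|_\infty^2$, $\|\rho\|_\infty^2$) and the remaining $\xi$'s by $1$, reducing the graph expectation to one of the form $\EbfP[N_i^{-2}(s)N_{i'}^{-2}(t)]$; Cauchy--Schwarz in the expectation (which costs nothing even though $N_i(s)$ and $N_{i'}(t)$ may be dependent when $i\ne i'$) together with the degree moment bound above yields a uniform $O(N^{-4})$. Multiplying the number of surviving pairs by the per-pair bound and by $T^2$ from the two time integrals gives $\EbfP\Sigma_N^2 = O(N^{-1})$ in all cases, and $O(N^{-2})$ for \eqref{eq:Ttilik}, which is what we want.

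I expect the only real work to be the combinatorics in the middle step — correctly identifying, for each of the four displays, which pairs of index tuples survive the centering, and checking that the resulting count times $O(N^{-4})$ tends to $0$. Everything else is already in place: the reciprocal-moment estimates for degrees are exactly Lemma \ref{lem:prep_1}, and it is precisely the non-degeneracy assumption $\liminf_N\pbar_N>0$ in Condition \ref{cond:cond2} that renders them uniform in $N$. In particular, in contrast to Lemma \ref{lem:approxNi}, no concentration bound on the degrees (Lemma \ref{lem:prep_4}) is required here; the crude sup-norm bounds on $\gamma$, $\vartheta$, $\rho$ combined with the centering-driven collapse in the number of surviving terms already do the job.
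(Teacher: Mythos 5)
Your proof is correct and follows essentially the same second-moment argument as the paper: expand $\EbfP\Sigma_N^2$, use the centering of $\gamma_s,\vartheta_s,\rho_s$ in each argument together with the independence of the particles from the graph to kill every cross term with an unmatched particle index, and bound the survivors by $\|\cdot\|_\infty^2 T^2\,\EbfP[N_i^{-2}(s)N_{i'}^{-2}(t)]=O(N^{-4})$ via Cauchy--Schwarz and Lemma~\ref{lem:prep_1}, which together with the $O(N^3)$ (or $O(N^2)$ for \eqref{eq:Ttilik}) survivor count gives the claim. The paper writes this out only for \eqref{eq:Ttilijk}, where restricting to $i<j<k$ makes all cross terms vanish exactly; your explicit enumeration of the surviving off-diagonal pairs for \eqref{eq:Ttilk} and \eqref{eq:Ttili} (where $i\ne i'$, resp.\ $k\ne k'$, still contribute) simply fills in what the paper dismisses as ``similar and hence omitted.''
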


\begin{proof}
	To prove \eqref{eq:Ttilijk}, it is enough to prove the convergence with the summation taken over the ordered sum $i < j < k$.
	Now note that
	\begin{align*}
		& \EbfP \left( \sum_{1 \le i < j < k \le N} \int_0^T \frac{\xi_{ij}(s)\xi_{ik}(s)}{N_i^2(s)} \gamma_s(X^i_s,X^j_s,X^k_s) \, ds \right)^2 \\
		& \quad = \EbfP \sum_{1 \le i < j < k \le N} \left( \int_0^T \frac{\xi_{ij}(s)\xi_{ik}(s)}{N_i^2(s)} \gamma_s(X^i_s,X^j_s,X^k_s) \, ds \right)^2 \\
		& \quad \le \kappa_1 N^3 \int_0^T \EbfP \frac{1}{N_1^4(s)} \, ds \le \kappa_2 N^3 \int_0^T \frac{1}{N^4 p_N^4(s)} \, ds \le \frac{\kappa_2 T}{N \pbar_N^4} \to 0,
	\end{align*}
	where the second inequality follows from Lemma~\ref{lem:prep_1}.
	Thus \eqref{eq:Ttilijk} holds.
	Proofs for \eqref{eq:Ttilik}, \eqref{eq:Ttilk} and \eqref{eq:Ttili} are similar and hence omitted.
\end{proof}
Recall the definition of $m_t$ from \eqref{eq:m_t}. Define
\begin{equation}\label{eq:eq114}
	\Jtil^{N,2}(T) \doteq \frac{N-2}{N^2}\sum_{1 \le j\neq k \le N} \int_0^T m_s(X^j_s,X^k_s) \, ds + \int_0^T \frac{\lambda_s}{p(s)}  \, ds.
\end{equation}
The following lemma shows that $\Jtil^{N,2}(T)$ is asymptotically the same as $J^{N,2}(T)$.
\begin{lemma}
	\label{lem:tiljn2}
	As $N\to \infty$, $J^{N,2}(T)-\Jtil^{N,2}(T)$ converges to $0$ in probability.
\end{lemma}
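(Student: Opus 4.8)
The plan is to expand the squared norm in $J^{N,2}(T)$, group the resulting triple sum by the coincidence pattern of its indices, dispose of the degenerate groups via Hoeffding decompositions together with Lemma~\ref{lem:Ttil}, extract the deterministic term $\int_0^T\lambda_s/p(s)\,ds$ from one group using Lemma~\ref{lem:approxNi}, and finally show that the leading group reproduces the $U$-statistic in $\Jtil^{N,2}(T)$. Recalling from Section~\ref{sec:reducing} the identity $b(X^i_s,\mutil^{i,N}_s)-b(X^i_s,\mu_s)=\frac{1}{N_i(s)}\sum_{j}\xi_{ij}(s)\bbar_s(X^i_s,X^j_s)$ (with $\bbar_s$ as in \eqref{eq:bbar t}), one has
\begin{equation*}
	J^{N,2}(T)=\sum_{i,j,k\in\Nbd}\int_0^T\frac{\xi_{ij}(s)\xi_{ik}(s)}{N_i^2(s)}\,\bbar_s(X^i_s,X^j_s)\cdot\bbar_s(X^i_s,X^k_s)\,ds ,
\end{equation*}
and I would split this into the five groups $\{i=j=k\}$, $\{j=k\ne i\}$, $\{i=j\ne k\}$, $\{i=k\ne j\}$, and $\{i,j,k\ \text{distinct}\}$. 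The $\{i=j=k\}$ group has $L^1(\Pmb^N)$-norm at most $L^2NT\cdot 4(N\pbar_N)^{-2}=O((N\pbar_N^2)^{-1})\to0$ by the bound $\EbfP[N_i^{-2}(s)]\le 4(N\pbar_N)^{-2}$ from Lemma~\ref{lem:prep_1}; and, after relabelling and using symmetry of the inner product, the groups $\{i=j\ne k\}$ and $\{i=k\ne j\}$ are the same sum, so it suffices to treat one of them.

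For every group in which $i\in\{j,k\}$ or $j=k$, the mechanism is the same: Hoeffding-decompose the kernel with respect to $\mu_s$ (resp.\ $\mu_s^{\otimes2}$) and feed the components into Lemma~\ref{lem:Ttil}. The structural fact that makes this work is that $\bbar_s(x,\cdot)$ is $\mu_s$-centred, which forces most Hoeffding components to vanish: $\bbar_s(x,x)\cdot\bbar_s(x,y)$ decomposes as an element of $L^2_c(\mu_s)$ in the variable $y$ plus an element of $L^2_c(\mu_s^{\otimes2})$ (no constant, no pure-$x$ term), and $\|\bbar_s(x,y)\|^2$ decomposes as $\lambda_s$ plus elements of $L^2_c(\mu_s)$ in $x$ and in $y$ plus an element of $L^2_c(\mu_s^{\otimes2})$; all components are bounded. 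Applying \eqref{eq:Ttilik}, \eqref{eq:Ttilk} and \eqref{eq:Ttili} to the non-constant components shows that the $\{i=j\ne k\}$ and $\{i=k\ne j\}$ groups, and the non-constant part of the $\{j=k\ne i\}$ group, all tend to $0$ in $L^1(\Pmb^N)$. The constant part of the $\{j=k\ne i\}$ group equals $\sum_{i}\int_0^T\lambda_s\big(N_i^{-1}(s)-N_i^{-2}(s)\big)\,ds$, since $\sum_{j\ne i}\xi_{ij}(s)=N_i(s)-1$; the $N_i^{-2}$-part is again $O((N\pbar_N^2)^{-1})$, while for the $N_i^{-1}$-part I would write $\sum_iN_i^{-1}(s)=N^{-1}\sum_iNN_i^{-1}(s)$ and use Lemma~\ref{lem:approxNi} together with the uniform convergence $p_N\to p$ (with $\bar p>0$, see \eqref{eq:eq416}) to obtain $\sum_i\int_0^T\lambda_sN_i^{-1}(s)\,ds\to\int_0^T\lambda_s/p(s)\,ds$ in $L^1(\Pmb^N)$. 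This produces the additive constant $\int_0^T\lambda_s/p(s)\,ds$ of $\Jtil^{N,2}(T)$.

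The all-distinct group is the core of the argument. Using once more that $\bbar_s(x,\cdot)$ is centred, the kernel $\bbar_s(x,y)\cdot\bbar_s(x,z)$ Hoeffding-decomposes with respect to $\mu_s^{\otimes3}$ as $m_s(y,z)+\gamma_s(x,y,z)$, with $\gamma_s\in L^2_c(\mu_s^{\otimes3})$ bounded and $m_s\in L^2_c(\mu_s^{\otimes2})$ (the latter because $\bbar_s(\cdot,x)$ integrates to $0$); by \eqref{eq:Ttilijk} the $\gamma_s$-part vanishes in $L^1(\Pmb^N)$, leaving
\begin{equation*}
	\sum_{\substack{j,k\in\Nbd\\ j\ne k}}\int_0^T m_s(X^j_s,X^k_s)\,W_{jk}(s)\,ds,\qquad W_{jk}(s):=\sum_{i\in\Nbd\setminus\{j,k\}}\frac{\xi_{ij}(s)\xi_{ik}(s)}{N_i^2(s)} .
\end{equation*}
It remains to replace the random weight $W_{jk}(s)$ by the deterministic constant $\tfrac{N-2}{N^2}$, i.e.\ to prove $\Delta_N:=\sum_{j\ne k}\int_0^T m_s(X^j_s,X^k_s)\big(W_{jk}(s)-\tfrac{N-2}{N^2}\big)\,ds\to0$ in probability; this concentration step is where I expect the main difficulty to lie. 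The plan is: first, bound $\sup_s\EbfP\big[(W_{jk}(s)-\tfrac{N-2}{N^2})^2\big]$ by splitting $W_{jk}(s)-\tfrac{N-2}{N^2}$ into $\sum_{i\notin\{j,k\}}\xi_{ij}(s)\xi_{ik}(s)\big(N_i^{-2}(s)-(Np_N(s))^{-2}\big)$ plus $(Np_N(s))^{-2}\big(\sum_{i\notin\{j,k\}}\xi_{ij}(s)\xi_{ik}(s)-(N-2)p_N^2(s)\big)$; the first piece has squared $L^2(\Pmb^N)$-norm at most $\tfrac{(N-2)^2}{N^4}\sup_s\EbfP\big[\tfrac{N^2}{N_1^2(s)}-\tfrac{1}{p_N^2(s)}\big]^2$ by Cauchy--Schwarz and exchangeability, which is $o(N^{-2})$ by Lemma~\ref{lem:approxNi}, and the second is a normalized sum of $N-2$ \emph{independent} centred bounded summands (independence because for distinct $i$ the products $\xi_{ij}\xi_{ik}$ involve disjoint edges), hence has variance $O((N^3\pbar_N^2)^{-1})=o(N^{-2})$; thus $\sup_s\EbfP\big[(W_{jk}(s)-\tfrac{N-2}{N^2})^2\big]\le\varepsilon_N/N^2$ with $\varepsilon_N\to0$. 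Second, compute $\EbfP[\Delta_N^2]$: since $\{\xi_{ij}\}$ is independent of $\{X^i\}$ and $m_s\in L^2_c(\mu_s^{\otimes2})$ is symmetric, a cross-term indexed by $\{j,k\}$ and $\{j',k'\}$ survives only when $\{j,k\}=\{j',k'\}$ (otherwise conditioning on the single shared coordinate annihilates one $m$-factor), so $\EbfP[\Delta_N^2]$ is a sum of $O(N^2)$ terms each $O(\varepsilon_N/N^2)$ and is therefore $O(\varepsilon_N)\to0$. Collecting the five groups then gives
\begin{equation*}
	J^{N,2}(T)=\frac{N-2}{N^2}\sum_{1\le j\ne k\le N}\int_0^T m_s(X^j_s,X^k_s)\,ds+\int_0^T\frac{\lambda_s}{p(s)}\,ds+o_{\Pmb^N}(1)=\Jtil^{N,2}(T)+o_{\Pmb^N}(1),
\end{equation*}
which is the assertion. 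Throughout, Condition~\ref{cond:cond2} is exactly what guarantees $N\pbar_N^2\to\infty$ (so all the $O(\cdot)$ rates vanish) and $p_N\to p$ uniformly with $\bar p>0$.
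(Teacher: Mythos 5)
Your proof is correct and follows essentially the same strategy as the paper: split the triple sum over $(i,j,k)$ into the five coincidence classes, kill the degenerate classes via Hoeffding-centring the kernels and invoking Lemma~\ref{lem:Ttil}, extract $\int_0^T\lambda_s/p(s)\,ds$ from the $\{j=k\ne i\}$ class via Lemma~\ref{lem:approxNi}, and reduce the all-distinct class to the $m_s$-kernel $U$-statistic by exploiting that $m_s\in L^2_c(\mu_s^{\otimes2})$ makes the off-diagonal cross-terms vanish. The only departure is organizational: in the all-distinct class the paper first replaces $N_i^{-2}(s)$ by $(Np_N(s))^{-2}$ (their $\Tmchat^{N,5}$) and then centres the products $\xi_{ij}\xi_{ik}$ about $p_N^2(s)$ in two separate $L^2$ estimates, whereas you pre-sum over $i$ to form the random weight $W_{jk}(s)$ and fold both error sources (the degree concentration from Lemma~\ref{lem:approxNi} and the independence of the Bernoulli products $\xi_{ij}\xi_{ik}$ across $i$) into a single bound $\sup_s\EbfP\bigl[(W_{jk}(s)-\tfrac{N-2}{N^2})^2\bigr]=o(N^{-2})$ before applying the cross-term cancellation — the same estimates, packaged slightly more tightly.
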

\begin{proof}
We split $J^{N,2}(T)$ as follows:
\begin{align*}
	J^{N,2}(T) & 
	= \sum_{i,j,k=1}^N \int_0^T \frac{\xi_{ij}(s) \xi_{ik}(s)}{N_i^2(s)} \bbar_s(X^i_s,X^j_s) \cdot \bbar_s(X^i_s,X^k_s) \, ds \\
	& = \sum_{n=1}^5 \sum_{(i,j,k) \in \Smc_n} \int_0^T \frac{\xi_{ij(s)} \xi_{ik}(s)}{N_i^2(s)} \bbar_s(X^i_s,X^j_s) \cdot \bbar_s(X^i_s,X^k_s) \, ds \doteq \sum_{n=1}^5 \Tmctil^{N,n},
\end{align*}
where $\Smc_1$, $\Smc_2$, $\Smc_3$, $\Smc_4$ and $\Smc_5$ are collections of $(i,j,k) \in \Nbd^3$ such that $\{i=j=k\}$, $\{i=j \ne k\}$, $\{i=k \ne j\}$, $\{j=k \ne i\}$ and $\{i,j,k \text{ distinct}\}$, respectively.
For $\Tmctil^{N,1}$, we have
\begin{align*}
	\EbfP | \Tmctil^{N,1} | & = \EbfP \sum_{i=1}^N \int_0^T \frac{1}{N_i^2(s)} \| \bbar_s(X^i_s,X^i_s) \|^2 \, ds \le \kappa_1 N \int_0^T \EbfP \frac{1}{N_1^2(s)} \, ds \\
	& \le \kappa_2 N \int_0^T \frac{1}{N^2 p_N^2(s)} \, ds \le \frac{\kappa_2 N T}{N^2 \pbar_N^2} \to 0
\end{align*}
as $N \to \infty$, where the second inequality follows from Lemma \ref{lem:prep_1}.	
To study the asymptotics of $\Tmctil^{N,2}$, $\Tmctil^{N,3}$, $\Tmctil^{N,4}$ and $\Tmctil^{N,5}$, we will use Lemma \ref{lem:Ttil}.

For $\Tmctil^{N,2}$, note that
\begin{align*}
	\Tmctil^{N,2} & = \sum_{1 \le i \ne k \le N} \int_0^T \frac{\xi_{ik}(s)}{N_i^2(s)} \bbar_s(X^i_s,X^i_s) \cdot \bbar_s(X^i_s,X^k_s) \, ds \\
	& = \sum_{1 \le i \ne k \le N} \int_0^T \frac{\xi_{ik}(s)}{N_i^2(s)} \left( \bbar_s(X^i_s,X^i_s) \cdot \bbar_s(X^i_s,X^k_s) - \intRd \bbar_s(y,y) \cdot \bbar_s(y,X^k_s) \, \mu_s(dy) \right) \, ds \\
	& \quad + \sum_{1 \le i \ne k \le N} \int_0^T \frac{\xi_{ik}(s)}{N_i^2(s)} \intRd \bbar_s(y,y) \cdot \bbar_s(y,X^k_s) \, \mu_s(dy) \, ds.
\end{align*}
It then follows from Lemma \ref{lem:Ttil} (see \eqref{eq:Ttilik} and \eqref{eq:Ttilk}) that $\EbfP |\Tmctil^{N,2}| \to 0$ as $N \to \infty$.
Similarly $\EbfP |\Tmctil^{N,3}| \to 0$ as $N \to \infty$.
	
Consider now the fourth term $\Tmctil^{N,4}$. 
Recalling $\lambda_t$ defined in \eqref{eq:lambdas}, we write
\begin{align*}
	\Tmctil^{N,4} & = \sum_{1 \le i \ne j \le N} \int_0^T \frac{\xi_{ij}(s)}{N_i^2(s)} \| \bbar_s(X^i_s,X^j_s) \|^2 \, ds \\
	& = \sum_{1 \le i \ne j \le N} \int_0^T \frac{\xi_{ij}(s)}{N_i^2(s)} \bigg( \| \bbar_s(X^i_s,X^j_s) \|^2 - \lan \|\bbar_s(X^i_s,\cdot)\|^2, \mu_s \ran - \lan \| \bbar_s(\cdot,X^j_s) \|^2,\mu_s \ran + \lambda_s \bigg) ds \\
	& \qquad + \sum_{1 \le i \ne j \le N} \int_0^T \frac{\xi_{ij}(s)}{N_i^2(s)} \left( \lan \|\bbar_s(X^i_s,\cdot)\|^2, \mu_s \ran - \lambda_s \right) ds \\
	& \qquad + \sum_{1 \le i \ne j \le N} \int_0^T \frac{\xi_{ij}(s)}{N_i^2(s)} \left( \lan \| \bbar_s(\cdot,X^j_s) \|^2,\mu_s \ran - \lambda_s \right) ds + \sum_{1 \le i \ne j \le N} \int_0^T \frac{\xi_{ij}(s)}{N_i^2(s)} \lambda_s \, ds \\
	& \doteq \sum_{n=1}^4 \Tmctil^{N,4}_n.
\end{align*}
It follows from Lemma \ref{lem:Ttil} (see \eqref{eq:Ttilik}, \eqref{eq:Ttilk} and \eqref{eq:Ttili}) that as $N \to \infty$,
\begin{equation*}
	\EbfP |\Tmctil^{N,4}_n| \to 0, \quad n=1,2,3.
\end{equation*}
We now show that
\begin{equation} \label{eq:TN44}
	\EbfP \left| \Tmctil^{N,4}_4 - \int_0^T \frac{1}{p(s)} \lambda_s \, ds \right| \to 0
\end{equation}
as $N \to \infty$.
For this, first write
\begin{equation*}
	\Tmctil^{N,4}_4 = \sum_{i=1}^N \int_0^T \frac{N_i(s)-1}{N_i^2(s)} \lambda_s \, ds = \sum_{i=1}^N \int_0^T \frac{1}{N_i(s)} \lambda_s \, ds - \sum_{i=1}^N \int_0^T \frac{1}{N_i^2(s)} \lambda_s \, ds.
\end{equation*}
By Lemma~\ref{lem:approxNi}
\begin{equation*}
	\lim_{N \to \infty} \EbfP \sum_{i=1}^N \int_0^T \frac{1}{N_i^2(s)} \lambda_s \, ds = \lim_{N \to \infty} \int_0^T \EbfP \frac{N}{N_1^2(s)} \lambda_s \, ds = 0.
\end{equation*}
Also as $N \to \infty$,
\begin{align*}
	\EbfP \left| \sum_{i=1}^N \int_0^T \frac{1}{N_i(s)} \lambda_s \, ds - \int_0^T \frac{1}{p(s)} \lambda_s \, ds \right|^2 & \le \kappa_3 N \EbfP \sum_{i=1}^N \int_0^T \left( \frac{1}{N_i(s)} - \frac{1}{Np(s)} \right)^2 \, ds \\
	& \le \kappa_3 N^2 \int_0^T \EbfP \left( \frac{1}{N_1(s)} - \frac{1}{Np(s)} \right)^2 \, ds \to 0,
\end{align*}
where the convergence follows from Lemma \ref{lem:approxNi} and \eqref{eq:eq416}.
This proves  \eqref{eq:TN44} and hence as $N \to \infty$,
\begin{equation*}
\Tmctil^{N,4} \to \int_0^T \frac{1}{p(s)} \lambda_s \, ds \mbox{ in probability}.
\end{equation*}

Finally consider the last term $\Tmctil^{N,5}$.
Recalling $m_t$ defined in \eqref{eq:m_t}, we have
\begin{align*}
	\Tmctil^{N,5} & = \sum_{(i,j,k) \in \Smc_5} \int_0^T \frac{\xi_{ij}(s) \xi_{ik}(s)}{N_i^2(s)} \bbar_s(X^i_s,X^j_s) \cdot \bbar_s(X^i_s,X^k_s) \, ds \\
	& = \sum_{(i,j,k) \in \Smc_5} \int_0^T \frac{\xi_{ij}(s) \xi_{ik}(s)}{N_i^2(s)} \left( \bbar_s(X^i_s,X^j_s) \cdot \bbar_s(X^i_s,X^k_s) - m_s(X^j_s,X^k_s) \right) \, ds \\
	& \qquad + \sum_{(i,j,k) \in \Smc_5} \int_0^T \frac{\xi_{ij}(s) \xi_{ik}(s)}{N_i^2(s)} m_s(X^j_s,X^k_s) \, ds \\
	& \doteq \Tmctil^{N,5}_1 + \Tmctil^{N,5}_2.
\end{align*}
It follows from Lemma \ref{lem:Ttil} (see \eqref{eq:Ttilijk}) that $\EbfP |\Tmctil^{N,5}_1| \to 0$ as $N \to \infty$.
Let
\begin{equation*}
	\Tmchat^{N,5} \doteq \sum_{(i,j,k) \in \Smc_5} \int_0^T \frac{\xi_{ij}(s) \xi_{ik}(s)}{N^2 p_N^2(s)} m_s(X^j_s,X^k_s) \, ds.
\end{equation*}
We now show that as $N \to \infty$,
\begin{equation} \label{eq:TN5}
	\EbfP |\Tmctil^{N,5}_2 - \Tmchat^{N,5}|^2 \to 0.
\end{equation}
To see this, as before, it suffices to consider the summation over ordered indices $i < j < k$.
Note that
\begin{align*}
	& \EbfP \left( \sum_{1 \le i < j < k \le N} \int_0^T \xi_{ij}(s) \xi_{ik}(s) \left( \frac{1}{N_i^2(s)} - \frac{1}{N^2 p_N^2(s)} \right) m_s(X^j_s,X^k_s) \, ds   \right)^2 \\
	& \quad \le N \EbfP \sum_{i=1}^N \left( \sum_{1 \le j < k \le N} \int_0^T \xi_{ij}(s) \xi_{ik}(s) \left( \frac{1}{N_i^2(s)} - \frac{1}{N^2 p_N^2(s)} \right) m_s(X^j_s,X^k_s) \, ds   \right)^2 \\
	& \quad = N \EbfP \sum_{i=1}^N \sum_{1 \le j < k \le N} \left( \int_0^T \xi_{ij}(s) \xi_{ik}(s) \left( \frac{1}{N_i^2(s)} - \frac{1}{N^2 p_N^2(s)} \right) m_s(X^j_s,X^k_s) \, ds   \right)^2 \\
	& \quad \le \kappa_4 N^4 \EbfP \int_0^T \left( \frac{1}{N_1^2(s)} - \frac{1}{N^2 p_N^2(s)} \right)^2 ds \to 0,
\end{align*}
where the convergence follows from Lemma \ref{lem:approxNi}.
So \eqref{eq:TN5} holds.
Next split $\Tmchat^{N,5}$ as
\begin{align}
	\Tmchat^{N,5} & = \sum_{(i,j,k) \in \Smc_5, j<k} \int_0^T \frac{\xi_{ij}(s) \xi_{ik}(s) - p_N^2(s)}{N^2 p_N^2(s)} m_s(X^j_s,X^k_s) \, ds \notag \\
	& \qquad + \sum_{(i,j,k) \in \Smc_5, j>k} \int_0^T \frac{\xi_{ij}(s) \xi_{ik}(s) - p_N^2(s)}{N^2 p_N^2(s)} m_s(X^j_s,X^k_s) \, ds \notag \\
	& \qquad + \sum_{(i,j,k) \in \Smc_5} \frac{1}{N^2} \int_0^T m_s(X^j_s,X^k_s) \, ds \notag \\
	& \doteq \Tmchat^{N,5}_1 + \Tmchat^{N,5}_2 + \Tmchat^{N,5}_3. \label{eq:Tmchat_N_5_3}
\end{align}
It follows from Condition \ref{cond:cond2} that as $N \to \infty$
\begin{align*}
	\EbfP |\Tmchat^{N,5}_1|^2 & = \sum_{(i,j,k) \in \Smc_5, j<k} \EbfP \left( \int_0^T \frac{\xi_{ij}(s) \xi_{ik}(s) - p_N^2(s)}{N^2 p_N^2(s)} m_s(X^j_s,X^k_s) \, ds \right)^2 \\
	& \le \kappa_5 \sum_{(i,j,k) \in \Smc_5, j<k} \int_0^T \EbfP \left( \frac{\xi_{ij}(s) \xi_{ik}(s) - p_N^2(s)}{N^2 p_N^2(s)} \right)^2 \, ds \\
	& \le \frac{\kappa_6}{N \pbar_N^4} \to 0.
\end{align*}
Similarly $\EbfP |\Tmchat^{N,5}_2|^2 \to 0$ as $N \to \infty$.
Combining the above convergence results, we have
$$J^{N,2}(T) - \Tmchat^{N,5}_3 - \int_0^T \frac{\lambda_s}{p(s)}  \, ds \to 0$$
in probability as $N\to \infty$.  The result now follows on observing that
\begin{equation*}
	\Jtil^{N,2}(T) = \Tmchat^{N,5}_3 + \int_0^T \frac{\lambda_s}{p(s)}  \, ds. \qedhere
\end{equation*}
\end{proof}
From Lemma \ref{lem:JN1_JN1_tilde} and \ref{lem:tiljn2} we have that
\begin{equation}
	\label{eq:joint_new}
	\left(J^{N,1}(T), J^{N,2}(T)\right) = \left( \Jtil^{N,1}(T) + \Rmc^{N,1}, \Jtil^{N,2}(T) + \Rmc^{N,2} \right),
\end{equation}
where $\Rmc^{N,1}, \Rmc^{N,2} \to 0$ in probability as $N \to \infty$.

\subsubsection{Asymptotics of $(\etatil^N(\phi), \Jtil^{N,1}(T), \Jtil^{N,2}(T))$}
\label{sec:asymptotics_joint}

Define for $\zeta \in \Dmb([0,T]:\{0,1\})$ and $\nu\otimes \nu$ a.e.\ $(\omega,\omega') \in \Omega_d^2$,
\begin{align*}
	u_N(\zeta,\omega,\omega') & \doteq \int_0^T \frac{\zeta(s)-p_N(s)}{p_N(s)} \bbar_s(X_{*,s}(\omega),X_{*,s}(\omega')) \cdot d W_{*,s}(\omega) \\
	& \quad + \int_0^T \frac{\zeta(s)-p_N(s)}{p_N(s)} \bbar_s(X_{*,s}(\omega'),X_{*,s}(\omega)) \cdot d W_{*,s}(\omega').
\end{align*}
Let, with $\{V^i\}$ and $\{\xi_{ij}\}$ as in Section \ref{sec:canonical_processes},
\begin{equation}
	\label{eq:U_N}
	U_N \doteq \frac{1}{N} \sum_{1 \le i < j \le N} u_N(\xi_{ij},V^i,V^j).
\end{equation}
The following result is  key for studying the asymptotics of $(\etatil^N(\phi), \Jtil^{N,1}(T), \Jtil^{N,2}(T))$.
The special case where $\xi_{ij}(s) = \xi_{ij}(0)$ for all $s \in [0,T]$, $i,j \in \Nbd$ and $\phi_k=0$ for all $k\neq 2$ was considered by Janson in the study of incomplete $U$-statistics~\cite{Janson1984}.

Recall $Z$ and $\{I_k(\cdot)\}$ introduced in Section \ref{sec:complete_pf_CLT}.
\begin{lemma} \label{lem:key}
	Let $\{ \phi_k \}_{k=1}^\infty$ be such that $\phi_k \in L^2_{c,sym}(\nu^{\otimes k})$ for each $k \ge 1$. 
	Then the following convergence holds as $N \to \infty$:
	\begin{equation*}
		\left( U_N, \left( N^{-\frac{k}{2}} \Umc^N_k (\phi_k) \right)_{k \ge 1} \right) \Rightarrow \left( Z, \left( \frac{1}{k!} I_k(\phi_k) \right)_{k \ge 1} \right)
	\end{equation*}
	as a sequence of $\Rmb^\infty$-valued random variables, where $Z$ and $\{I_k(\cdot)\}_{k \ge 1}$ are as introduced in Section \ref{sec:complete_pf_CLT}, and $\Umc^N_k(\cdot)$ is as defined in Section \ref{sec:asymp_symmetric_statistics} with $Y_i$ replaced by $V_i$.
\end{lemma}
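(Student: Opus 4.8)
The plan is to prove the asserted convergence in $\Rmb^\infty$ by checking convergence of all finite-dimensional marginals, and for each fixed $K_0$ to compute the joint characteristic function of $\big(U_N,(N^{-k/2}\Umc^N_k(\phi_k))_{1\le k\le K_0}\big)$ by conditioning on the $\sigma$-field $\Gmc_N\doteq\sigma\{V^i:i\in\Nbd\}$ generated by the i.i.d.\ particle data. The key structural observation is that the symmetric statistics $\Umc^N_k(\phi_k)$ are $\Gmc_N$-measurable, whereas, \emph{conditionally on $\Gmc_N$}, each summand $u_N(\xi_{ij},V^i,V^j)$ of $U_N$ in \eqref{eq:U_N} is a measurable function of the single edge process $\xi_{ij}$ (since $V^i,V^j\in\Gmc_N$); as the $\{\xi_{ij}:i<j\}$ are mutually independent and jointly independent of $\Gmc_N$, the family $\{u_N(\xi_{ij},V^i,V^j):i<j\}$ is conditionally independent given $\Gmc_N$. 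Thus, conditionally on $\Gmc_N$, $U_N=\frac1N\sum_{1\le i<j\le N}u_N(\xi_{ij},V^i,V^j)$ is a normalized sum of conditionally independent random variables, to which a conditional Lindeberg--Feller theorem can be applied, while the symmetric statistics are frozen.

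Next I would run this conditional CLT. First, each summand is conditionally centered: given $\Gmc_N$, the integrands defining $u_N(\xi_{ij},V^i,V^j)$ are affine in the path $\xi_{ij}(\cdot)$ with $\Ebf[\xi_{ij}(s)]=p_N(s)$, so that $\Ebf[u_N(\xi_{ij},V^i,V^j)\mid\Gmc_N]=0$ (interchanging the expectation over $\xi_{ij}$ with the stochastic integrals, justified by a Riemann-sum approximation together with the uniform bounds $\|\bbar\|_\infty\le L$ and $p_N\ge\pbar_N$). For the conditional variance, It\^o's isometry and the independence of the edges from the particles give $\Ebf[u_N(\xi_{12},V^1,V^2)^2]=2\int_0^T\frac{1-p_N(s)}{p_N(s)}\lambda_s\,ds$, so by exchangeability $\Ebf\big[\frac1{N^2}\sum_{i<j}\Ebf[u_N(\xi_{ij},V^i,V^j)^2\mid\Gmc_N]\big]\to\sigma^2$ using Condition~\ref{cond:cond2} (which supplies $p_N\to p$ and $\liminf\pbar_N>0$); an index-overlap second-moment estimate of the type carried out in Lemma~\ref{lem:Ttil}, together with the uniform bound $\sup_N\max_{i<j}\Ebf[u_N(\xi_{ij},V^i,V^j)^4]\le\kappa$ (Burkholder--Davis--Gundy plus $\|\bbar\|_\infty\le L$, $p_N\ge\pbar_N\ge\mathrm{const}>0$), shows the variance of this quantity tends to $0$, hence the conditional variance converges to $\sigma^2$ in probability. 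The conditional Lindeberg condition follows because its unconditional expectation is at most $\frac1{2\epsilon^2N^2}\sup_{i<j}\Ebf[u_N(\xi_{ij},V^i,V^j)^4]\to0$. The standard (conditional) Lindeberg argument then yields, for every $t_0\in\Rmb$, that $\Ebf[\exp(it_0U_N)\mid\Gmc_N]\to\exp(-\tfrac12 t_0^2\sigma^2)$; the limit being non-random, this convergence holds in probability.

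To conclude, fix $t_0,\dots,t_{K_0}\in\Rmb$ and put $\Xi_N\doteq\sum_{k=1}^{K_0}t_kN^{-k/2}\Umc^N_k(\phi_k)$, which is $\Gmc_N$-measurable with $|e^{i\Xi_N}|=1$. Then
\[
  \Ebf\big[\exp(it_0U_N+i\Xi_N)\big]=\Ebf\big[e^{i\Xi_N}\,\Ebf[e^{it_0U_N}\mid\Gmc_N]\big]=e^{-t_0^2\sigma^2/2}\,\Ebf[e^{i\Xi_N}]+o(1),
\]
the error being bounded by $\Ebf\big|\Ebf[e^{it_0U_N}\mid\Gmc_N]-e^{-t_0^2\sigma^2/2}\big|\to0$. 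By Theorem~\ref{thm:Dynkin} (with $Y_i$ replaced by $V^i$ and $\theta=\nu$), $\Ebf[e^{i\Xi_N}]\to\Ebf\big[\exp(i\sum_{k=1}^{K_0}t_k\tfrac1{k!}I_k(\phi_k))\big]$, so the displayed quantity converges to $e^{-t_0^2\sigma^2/2}\,\Ebf\big[\exp(i\sum_k t_k\tfrac1{k!}I_k(\phi_k))\big]$; since $Z$ has variance $\sigma^2$ and is independent of $\{I_k(\cdot)\}$, this equals $\Ebf\big[\exp(it_0Z+i\sum_k t_k\tfrac1{k!}I_k(\phi_k))\big]$, the characteristic function of the claimed limit. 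I expect the main obstacle to be precisely the structural step of the first paragraph together with the conditional moment asymptotics: recognizing that conditioning on $\Gmc_N$ simultaneously immobilizes the symmetric statistics and turns $U_N$ into a sum of conditionally independent increments, and then pushing through the computation of the conditional first, second and fourth moments (somewhat delicate because of the interplay between the jump processes $\xi_{ij}$ and the It\^o integrals). The remaining pieces --- Theorem~\ref{thm:Dynkin} and the $o(1)$ bookkeeping --- are routine, the latter being of the same flavor as Lemmas~\ref{lem:approxNi}--\ref{lem:tiljn2}.
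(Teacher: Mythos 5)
Your proof is correct and follows essentially the same route as the paper's: condition on $\sigma\{V^i:i\in\Nbd\}$ so that the summands $u_N(\xi_{ij},V^i,V^j)$ become conditionally independent and centered while the symmetric statistics $\Umc^N_k(\phi_k)$ are frozen, verify a conditional CLT for $U_N$, and splice it with the Dynkin--Mandelbaum theorem through the conditional characteristic function. The only cosmetic differences are that the paper verifies Lyapunov's condition with $\delta=2$ (and explicitly dispenses with the degenerate case $\sigma^2=0$ beforehand), whereas you invoke a conditional Lindeberg condition driven by the same unconditional fourth-moment bound, which handles $\sigma^2=0$ automatically.
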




\begin{proof}
%
%
	The proof uses similar conditioning arguments as in Janson \cite{Janson1984} (see Lemma $2$ and Theorem $1$ therein).
	Fix $K \in \Nmb$, $\phi_k \in L^2_{c,sym}(\nu^{\otimes k})$ and $t, s_k \in \Rmb$ for $k=1,\dotsc,K$.
	Denote by $\EbfPV$ the conditional expectation under $\Pmb^N$ given $(V^i)_{i=1}^N$.	
	Since the collection $\{u_N(\xi_{ij},V^i,V^j)\}_{i<j}$ is conditionally independent given $(V^i)_{i=1}^N$, and $\EbfPV [u_N(\xi_{ij},V^i,V^j)] = 0$ for each $i<j$, we have
	\begin{equation*}
		\sigma_N^2 \doteq \EbfPV [U_N^2] = \frac{1}{N^2} \sum_{1 \le i < j \le N} \EbfPV [u_N^2(\xi_{ij},V^i,V^j)].
	\end{equation*}
	Recall $\sigma^2$ defined in \eqref{eq:sigmasq}.
	It follows from \eqref{eq:eq416} that as $N \to \infty$,
	\begin{equation*}
		\EbfP [\sigma_N^2] = \frac{1}{N^2} \sum_{1 \le i < j \le N} \EbfP [u_N^2(\xi_{ij},V^i,V^j)] = \frac{2}{N^2} \sum_{1 \le i < j \le N} \int_0^T \frac{1-p_N(s)}{p_N(s)} \lambda_s \, ds \to \sigma^2
	\end{equation*}
	and
	\begin{align*}
		\EbfP \left(\sigma_N^2 - \EbfP [\sigma_N^2]\right)^2 & = \frac{1}{N^4} \sum_{1 \le i < j \le N} \EbfP \left( \EbfPV [u_N^2(\xi_{ij},V^i,V^j)] - \EbfP [u_N^2(\xi_{ij},V^i,V^j)] \right)^2 \\
		& \le \frac{N(N-1)}{2N^4} \EbfP [u_N^4(\xi_{12},V^1,V^2)] \to 0.
	\end{align*}
	So $\sigma_N^2 \to \sigma^2$ in probability.
	Suppose without loss of generality that $\sigma^2 > 0$, since otherwise we have that $Z = 0$, $U_N \to 0$ in probability as $N \to \infty$ and the desired convergence holds trivially by Theorem \ref{thm:Dynkin}.
	Also note that as $N \to \infty$,
	\begin{equation*}
		\EbfP \sum_{1 \le i < j \le N} \EbfPV \left|\frac{u_N(\xi_{ij},V^i,V^j)}{N}\right|^4 = \frac{N(N-1)}{2N^4} \EbfP [u_N^4(\xi_{12},V^1,V^2)] \to 0.
	\end{equation*}
	Hence the Lyapunov's condition for CLT (see~\cite{Billingsley1995probability}, Theorem 27.3) holds with $\delta=2$:
	\begin{equation*}
		\lim_{N \to \infty} \frac{1}{\sigma_N^{2+\delta}} \sum_{1 \le i < j \le N} \EbfPV \left|\frac{u_N(\xi_{ij},V^i,V^j)}{N}\right|^{2+\delta} = 0,
	\end{equation*}
	where the convergence is in probability.
	It then follows from standard proofs of CLT and a subsequence argument that for each $t \in \Rmb$,
	\begin{equation*}
		\EbfPV \left[e^{itU_N}\right] - e^{-\half t^2 \sigma_N^2} \to 0 
	\end{equation*}
	in probability as $N \to \infty$, which together with the convergence of $\sigma_N^2 \to \sigma^2$ implies that 
	\begin{equation}
		\EbfPV \left[ e^{itU_N} \right] \to e^{-t^2 \sigma^2/2}\label{eq:eq135}
	\end{equation}
	in probability as $N \to \infty$.
	Now let $(t, s_1, \ldots s_K) \mapsto \varphi_N(t,s_1,\dotsc,s_K)$ be the characteristic function of $$(U_N,N^{-\frac{1}{2}} \Umc^N_1 (\phi_1),\dotsc,N^{-\frac{K}{2}} \Umc^N_K (\phi_K)),$$ and $$\varphi(t,s_1,\dotsc,s_K) \doteq e^{-\half t^2\sigma^2} \psi(s_1,\dotsc,s_K),\;\; (t,s_1,\dotsc,s_K)\in \Rmb^{K+1}$$
	 be that of $(Z,I_1(\phi_1),\dotsc,\frac{1}{K!} I_K(\phi_K))$.
	It follows from Theorem \ref{thm:Dynkin} that for all $(s_1,\dotsc,s_K)\in \Rmb^{K}$
	\begin{equation}\EbfP [e^{i \sum_{k=1}^K s_k N^{-\frac{k}{2}} \Umc^N_k (\phi_k)}] \to \psi(s_1,\dotsc,s_K) \mbox{ as }
		N \to \infty.
		\label{eq:eq137}
	\end{equation}
	Thus as $N \to \infty$,
	\begin{align*}
		\varphi_N(t,s_1,\dotsc,s_K) - \varphi(t,s_1,\dotsc,s_K) & = \EbfP \left[ e^{itU_N + i \sum_{k=1}^K s_k N^{-\frac{k}{2}} \Umc^N_k (\phi_k)} - e^{-\half t^2\sigma^2} \psi(s_1,\dotsc,s_K) \right] \\
		& = \EbfP \left[ \left( \EbfPV \left[ e^{itU_N}\right] - e^{-\half t^2\sigma^2} \right) e^{i \sum_{k=1}^K s_k N^{-\frac{k}{2}} \Umc^N_k (\phi_k)} \right] \\
		& \quad + \left(\EbfP [e^{i \sum_{k=1}^K s_k N^{-\frac{k}{2}} \Umc^N_k (\phi_k)}] - \psi(s_1,\dotsc,s_K)\right) e^{-\half t^2\sigma^2} \\
		& \to 0,
	\end{align*}
	where the convergence follows from \eqref{eq:eq135} and \eqref{eq:eq137}.
	This completes the proof.	
\end{proof}
Now we  complete the proof of Proposition \ref{prop:key_joint_cvg}.
From \eqref{eq:JN1til}, \eqref{eq:U_N} and \eqref{eq:h_sym} we can write
\begin{align*} 
	\Jtil^{N,1}(T) &= \sum_{1 \le i < j \le N} \left( \int_0^T \frac{\xi_{ij}(s)-p_N(s)}{N p_N(s)} \bbar_s(X^i_s,X^j_s) \cdot d W^i_s + \int_0^T \frac{\xi_{ij}(s)-p_N(s)}{N p_N(s)} \bbar_s(X^j_s,X^i_s) \cdot d W^j_s \right) \\
	& \quad + \frac{1}{N} \sum_{1 \le i < j \le N} \left( \int_0^T \bbar_s(X^i_s,X^j_s) \cdot d W^i_s + \int_0^T \bbar_s(X^j_s,X^i_s) \cdot d W^j_s \right) \\
	& \quad + \sum_{i=1}^N \int_0^T \frac{1}{N p_N(s)} \bbar_s(X^i_s,X^i_s) \cdot d W^i_s \\
	& = U_N + \frac{2}{N} \Umc^N_2(h^{sym}) + \sum_{i=1}^N \int_0^T \frac{1}{N p_N(s)} \bbar_s(X^i_s,X^i_s) \cdot d W^i_s.
\end{align*}
It follows from Condition \ref{cond:cond2} and law of large numbers that as $N \to \infty$
\begin{equation*}
	\sum_{i=1}^N \int_0^T \frac{1}{N p_N(s)} \bbar_s(X^i_s,X^i_s) \cdot d W^i_s \Rightarrow 0.
\end{equation*}
Also, from  \eqref{eq:eq114}  we have, with $l$ as in \eqref{eq:l},
\begin{equation*}
	\Jtil^{N,2}(T)-\int_0^T \frac{\lambda_s}{p(s)}  \, ds = \sum_{1 \le j \ne k \le N} \frac{N-2}{N^2} \int_0^T m_s(X^j_s,X^k_s) \, ds  = \frac{2(N-2)}{N^2} \Umc_2^N(l).
\end{equation*}
Combining above three displays with \eqref{eq:joint_new}, noting that
$\etatil^N(\phi) =  N^{-\frac{1}{2}} \Umc^N_1 (\phi)$,
and applying Lemma \ref{lem:key} gives us \eqref{eq:joint_cvg}.
This completes the proof of Proposition \ref{prop:key_joint_cvg}. \qed


\bigskip

\section*{Acknowledgement}
SB has been partially supported by NSF-DMS grants 1310002, 160683, 161307 and SES grant 1357622.
AB and RW have been partially supported by the National Science Foundation (DMS-1305120), the Army Research Office (W911NF-14-1-0331) and DARPA (W911NF-15-2-0122).

\bibliographystyle{plain}

\setcounter{equation}{0}
\appendix
\numberwithin{equation}{section}
%
%

\section{Proof of Corollary \ref{cor:LLN}} \label{sec:pf_Cor_LLN}

(a) Fix $\alpha \in \Kbd$ and distinct $i_\alpha, i_\alpha' \in \Nalpha$.
For each fixed $g \in \Cmb_b(\Cmc_d)$ and $x \in \Cmc_d$, let $g^c(x) \doteq g(x) - \langle g, \mu^\alpha \rangle$.
Then as $N \to \infty$
\begin{align*}
	\Ebf \left( \langle g, \mu^{\alpha,N} \rangle - \langle g, \mu^\alpha \rangle \right)^2 & = \Ebf \left(\frac{1}{N_\alpha} \sum_{i \in \Nalpha} g^c(Z^{i,N})\right)^2 \\
	& = \frac{1}{N_\alpha^2} \sum_{i,j \in \Nalpha} \Ebf \left( g^c(Z^{i,N}) g^c(Z^{j,N}) \right) \\
	& = \frac{1}{N_\alpha} \Ebf \left( g^c(Z^{i_\alpha,N}) \right)^2 + \frac{N_\alpha-1}{N_\alpha} \Ebf \left( g^c(Z^{i_\alpha,N}) g^c(Z^{i_\alpha',N}) \right) \\
	& \le \frac{1}{\Nbar} + \Ebf \left( g^c(Z^{i_\alpha,N}) g^c(Z^{i_\alpha',N}) \right),
\end{align*}
which goes to $0$ as $N \to \infty$ by Corollary \ref{cor:poc} and Condition \ref{cond:cond1}. 
This proves part (a).

(b) 
Fix $\alpha, \gamma \in \Kbd$, $i \in \Nmb_\alpha$ and $i_\gamma \in \Nmb_\gamma$.
Abbreviate $\xi_{ij}^N(0), N_{i,\gamma}(0), p_{\alpha\gamma,N}(0)$ as  $\xi_{ij}, N_{i,\gamma}, p_{\alpha\gamma,N}$.
Let $\mubar^{i,\gamma,N} \doteq \frac{1}{N_{i,\gamma}} \sum_{j \in \Ngamma} \xi_{ij} \delta_{X^j} \one_{\{N_{i,\gamma}>0\}}$.
It suffices to show that $d_{BL}(\mu^{i,\gamma,N},\mubar^{i,\gamma,N}) \Rightarrow 0$ and $\mubar^{i,\gamma,N} \Rightarrow \mu^\gamma$ as $N \to \infty$.
Note that
\begin{align*}
	\Ebf \, d_{BL}(\mu^{i,\gamma,N},\mubar^{i,\gamma,N}) & = \Ebf \sup_{\|g||_{BL} \le 1} \left| \langle g, \mu^{i,\gamma,N} \rangle - \langle g, \mubar^{i,\gamma,N} \rangle \right| \\
	& = \Ebf \sup_{\|g||_{BL} \le 1} \left| \sum_{j \in \Ngamma} \frac{\xi_{ij}}{N_{i,\gamma}} \one_{\{N_{i,\gamma}>0\}} \left( g(Z^{j,N}) - g(X^j) \right) \right| \\
	& \le \sum_{j \in \Ngamma} \Ebf \frac{\xi_{ij}}{N_{i,\gamma}} \one_{\{N_{i,\gamma}>0\}} \| Z^{j,N} - X^j \|_{*,T}.
\end{align*}
By similar arguments as in \eqref{eq:TN2_gamma}--\eqref{eq:TN2_temp} we see that the above display can be bounded by
\begin{align*}
	& \left(\Ebf \left( \sum_{k \in \Nalpha} \frac{N_\gamma \xi_{k\igamma}}{N_\alpha N_{k,\gamma}} \one_{\{N_{k,\gamma} > 0\}} - 1 \right)^2 \Ebf \left\|Z^{\igamma,N} - X^\igamma\right\|^2_{*,T}\right)^{1/2} + \Ebf \left\|Z^{\igamma,N} - X^\igamma\right\|_{*,T} \\
	& \quad \le \kappa \left(\frac{1}{N_\alpha p_{\alpha\gamma,N}} + e^{-N_\gamma p_{\alpha\gamma,N}}\right)^{1/2} + \Ebf \left\|Z^{\igamma,N} - X^{\igamma}\right\|_{*,T} \\
	& \quad \le \kappa \left(\frac{1}{\Nbar \pbar_N} + e^{-\Nbar \pbar_N}\right)^{1/2} + \Ebf \left\|Z^{\igamma,N} - X^{\igamma}\right\|_{*,T},
\end{align*}
where the first inequality is from Lemma \ref{lem:prep_2} and \eqref{eq:ZiXi_bd}. From Theorem \ref{thm:NpN_rate} and Condition \ref{cond:cond1}, the last quantity
goes to $0$ as $N \to \infty$.
Thus
$d_{BL}(\mu^{i,\gamma,N},\mubar^{i,\gamma,N}) \Rightarrow 0$ as $N \to \infty$.

Next we show that $\mubar^{i,\gamma,N} \Rightarrow \mu^\gamma$ as $N \to \infty$.
Recall that for $g \in \Cmb_b(\Cmc_d)$ and $x \in \Cmc_d$,  $g^c(x) \doteq g(x) - \langle g, \mu^\gamma \rangle$.
Then as $N \to \infty$,
\begin{align*}
	\Ebf \left( \langle g, \mubar^{i,\gamma,N} \rangle - \langle g, \mu^\gamma \rangle \one_{\{N_{i,\gamma}>0\}} \right)^2 & = \Ebf \left( \sum_{j \in \Ngamma} \frac{\xi_{ij}}{N_{i,\gamma}} \one_{\{N_{i,\gamma}>0\}} g^c(X^j) \right)^2 \\
	& = \Ebf \sum_{j \in \Ngamma} \frac{\xi_{ij}}{N_{i,\gamma}^2} \one_{\{N_{i,\gamma}>0\}} \left( g^c(X^j) \right)^2.
\end{align*}	
The expression on the right can be bounded above by	
\begin{align*} 4 \|g\|_\infty^2 \Ebf \sum_{j \in \Ngamma} \frac{\xi_{ij}}{N_{i,\gamma}^2} \one_{\{N_{i,\gamma}>0\}}  = 4 \|g\|_\infty^2 \Ebf \frac{1}{N_{i,\gamma}} \one_{\{N_{i,\gamma}>0\}}  \le 4 \|g\|_\infty^2 \Ebf \frac{2}{N_{i,\gamma}+1} \le \frac{8\|g\|_\infty^2}{N_\gamma p_{\alpha\gamma,N}}  \le \frac{8\|g\|_\infty^2}{\Nbar \pbar_N}  \to 0,
\end{align*}
where the second inequality follows from Lemma \ref{lem:prep_1}.
Also as $N \to \infty$,
\begin{equation*}
	\Ebf \left( \langle g, \mu^\gamma \rangle \one_{\{N_{i,\gamma}=0\}} \right)^2 \le \|g\|_\infty^2 (1-p_{\alpha\gamma,N})^{N_\gamma} \le \|g\|_\infty^2 e^{-N_\gamma p_{\alpha\gamma,N}} \le \|g\|_\infty^2 e^{-\Nbar\pbar_N} \to 0.
\end{equation*}
Combing above two convergence results implies that $\mubar^{i,\gamma,N} \Rightarrow \mu^\gamma$ as $N \to \infty$, and part (b) follows. \qed

\section{Proof of Lemmas from Section \ref{sec:pre_results}}
\label{sec:pf_section_5}

In this section we give the proofs of Lemmas \ref{lem:prep_1}, \ref{lem:prep_2}, \ref{lem:prep_4}.
\subsection{Proof of Lemma \ref{lem:prep_1}}

	Note that
	\begin{gather*}
		\Ebf \frac{1}{X+1} = \sum_{k=0}^n \frac{1}{k+1} \binom{n}{k} p^k q^{n-k} = \frac{1}{(n+1)p} \sum_{k=0}^n \binom{n+1}{k+1} p^{k+1} q^{n-k} = \frac{1-q^{n+1}}{(n+1)p}.
	\end{gather*}
	Hence for each $m \in \Nmb$,
	\begin{equation*}
		\Ebf \frac{1}{X+m} = \Ebf \frac{1}{X+1} \frac{X+1}{X+m} \le \Ebf \frac{1}{X+1} \frac{n+1}{n+m} = \frac{1-q^{n+1}}{(n+1)p} \frac{n+1}{n+m} = \frac{1-q^{n+1}}{(n+m)p} \le \frac{1}{(n+m)p}.
	\end{equation*}
	Similarly,
	\begin{align*}
	\Ebf \frac{1}{(X+1)^m} & \le \Ebf \frac{m^m}{(X+1)(X+2)\dotsm(X+m)} = \sum_{k=0}^n \frac{m^m}{(k+1)(k+2)\dotsm(k+m)} \binom{n}{k} p^k q^{n-k} \\
	& \le \frac{m^m}{(n+1)(n+2)\dotsm(n+m)p^m} \le \frac{m^m}{(n+1)^m p^m},
	\end{align*}
	which completes the proof.
	\qed

\subsection{Proof of Lemma \ref{lem:prep_2}}

	Fix $\alpha \ne \gamma$, $i_\gamma \in \Ngamma$ and $i_\alpha \in \Nalpha$.
	Since 
	\begin{equation*}
		\Lmc(\zeta_{ki_\gamma}, N_{k,\gamma}) = \Lmc(\zeta_{i_\alpha i_\gamma}, N_{i_\alpha,\gamma}) = \Lmc(\zeta_{i_\alpha j}, N_{i_\alpha,\gamma}), \quad k \in \Nalpha, j \in \Ngamma,
	\end{equation*} 
	we have 
	\begin{align*}
		\Ebf \sum_{k \in \Nalpha} \frac{N_\gamma \zeta_{ki_\gamma}}{N_\alpha N_{k,\gamma}} \one_{\{N_{k,\gamma} > 0\}} = \Ebf \frac{N_\gamma \zeta_{i_\alpha i_\gamma}}{N_{i_\alpha,\gamma}} \one_{\{N_{i_\alpha,\gamma} > 0\}} = \Ebf \sum_{k \in \Ngamma} \frac{\zeta_{i_\alpha k}}{N_{i_\alpha,\gamma}} \one_{\{N_{i_\alpha,\gamma} > 0\}} = \Pbd(N_{i_\alpha,\gamma} > 0).
	\end{align*}
	Using this we can write
	\begin{align}
		& \Ebf \left( \sum_{k \in \Nalpha} \frac{N_\gamma \zeta_{ki_\gamma}}{N_\alpha N_{k,\gamma}} \one_{\{N_{k,\gamma} > 0\}} - 1 \right)^2 \notag \\
		& = \frac{N_\gamma^2}{N_\alpha^2} \Ebf \sum_{k,l \in \Nalpha} \frac{\zeta_{ki_\gamma} \zeta_{li_\gamma}}{N_{k,\gamma} N_{l,\gamma}} \one_{\{N_{k,\gamma} > 0\}} \one_{\{N_{l,\gamma} > 0\}} - 2 \Pbd(N_{i_\alpha,\gamma} > 0) + 1 \notag \\
		& = \frac{N_\gamma^2}{N_\alpha^2} \Ebf \sum_{k \in \Nalpha} \frac{\zeta_{ki_\gamma}}{N_{k,\gamma}^2} \one_{\{N_{k,\gamma} > 0\}} + \frac{N_\gamma^2}{N_\alpha^2} \Ebf \sum_{k,l \in \Nalpha, k \ne l} \frac{\zeta_{ki_\gamma} \zeta_{li_\gamma}}{N_{k,\gamma} N_{l,\gamma}} \one_{\{N_{k,\gamma} > 0\}} \one_{\{N_{l,\gamma} > 0\}} + 2 \Pbd(N_{i_\alpha,\gamma} = 0) - 1 \notag \\
		& \doteq \sum_{n=1}^3 \Tmc^{N,n}_\gamma - 1, \label{eq:pf_prep3_key}
	\end{align}
	For $\Tmc^{N,1}_\gamma$, we have  by a straightforward conditioning argument,
	\begin{align}
		\Tmc^{N,1}_\gamma & \doteq \frac{N_\gamma^2}{N_\alpha^2} \sum_{k \in \Nalpha} \Ebf \frac{\zeta_{ki_\gamma}}{N_{k,\gamma}^2} \one_{\{N_{k,\gamma} > 0\}} = \frac{N_\gamma^2}{N_\alpha^2} \sum_{k \in \Nalpha} p_{\alpha\gamma,N} \Ebf \frac{1}{(N_{k,\gamma}-\zeta_{ki_\gamma}+1)^2} \notag \\
		& \le \frac{N_\gamma^2}{N_\alpha^2} N_\alpha p_{\alpha\gamma,N} \frac{4}{(N_\gamma p_{\alpha\gamma,N})^2} = \frac{4}{N_\alpha p_{\alpha\gamma,N}}, \label{eq:pf_prep3_bound1}
	\end{align}
	where the inequality follows from Lemma \ref{lem:prep_1}.
	For $\Tmc^{N,2}_\gamma$, using the independence of $(\zeta_{ki_\gamma}, N_{k,\gamma})$ and $(\zeta_{li_\gamma}, N_{l,\gamma})$ for different $k,l \in \Nalpha$, we have
	\begin{align}
		\Tmc^{N,2}_\gamma 
		& = \frac{N_\gamma^2}{N_\alpha^2} \sum_{k,l \in \Nalpha, k \ne l} \Ebf \left( \frac{\zeta_{ki_\gamma}}{N_{k,\gamma}} \one_{\{N_{k,\gamma} > 0\}} \right) \Ebf \left( \frac{\zeta_{li_\gamma}}{N_{l,\gamma}} \one_{\{N_{l,\gamma} > 0\}} \right) \notag \\
		& = \frac{N_\gamma^2}{N_\alpha^2} \sum_{k,l \in \Nalpha, k \ne l} p_{\alpha\gamma,N}^2 \Ebf \left( \frac{1}{N_{k,\gamma}-\zeta_{ki_\gamma}+1} \right) \Ebf \left( \frac{1}{N_{l,\gamma}-\zeta_{li_\gamma}+1} \right) \notag \\
		& \le \frac{N_\gamma^2}{N_\alpha^2} N_\alpha (N_\alpha-1) p_{\alpha\gamma,N}^2 \left( \frac{1}{N_\gamma p_{\alpha\gamma,N}} \right)^2 \le 1, \label{eq:pf_prep3_bound2}
	\end{align}
	where the first inequality once more follows from Lemma \ref{lem:prep_1}.
	Finally, for $\Tmc^{N,3}_\gamma$, we have
	\begin{equation}
		\label{eq:pf_prep3_bound3}
		\Tmc^{N,3}_\gamma \doteq 2 \Pbd(N_{i_\alpha,\gamma} = 0) = 2 (1 - p_{\alpha\gamma,N})^{N_\gamma} \le 2 e^{-N_\gamma p_{\alpha\gamma,N}}.
	\end{equation}		
	Plugging \eqref{eq:pf_prep3_bound1}--\eqref{eq:pf_prep3_bound3} into \eqref{eq:pf_prep3_key} gives the first statement when $\alpha \neq \gamma$. The case $\alpha =\gamma$ is immediate
	from the second statement which we prove next.

	Fix $i_\alpha \in \Nalpha$.
	Since
	\begin{equation}
		\label{eq:pf_prep3_exchange_new}
		\Lmc(\zeta_{ki_\alpha}, N_{k,\alpha}) = \Lmc(\zeta_{i_\alpha k}, N_{i_\alpha,\alpha}), \quad k \in \Nalpha,
	\end{equation} 
	we have
	\begin{equation*}
		\Ebf \sum_{k \in \Nalpha} \frac{\zeta_{ki_\alpha}}{N_{k,\alpha}} = \Ebf \sum_{k \in \Nalpha} \frac{\zeta_{i_\alpha k}}{N_{i_\alpha,\alpha}} = 1.
	\end{equation*}
	Using this we can write
	\begin{align}
		\Ebf \left( \sum_{k \in \Nalpha} \frac{\zeta_{ki_\alpha}}{N_{k,\alpha}} - 1 \right)^2 & = \Ebf \left( \sum_{k \in \Nalpha} \frac{\zeta_{ki_\alpha}}{N_{k,\alpha}} \right)^2 - 2 \Ebf \sum_{k \in \Nalpha} \frac{\zeta_{ki_\alpha}}{N_{k,\alpha}} + 1 = \Ebf \sum_{k,l \in \Nalpha} \frac{\zeta_{ki_\alpha} \zeta_{li_\alpha}}{N_{k,\alpha} N_{l,\alpha}} - 1 \notag \\
		& = \sum_{n=1}^4 \Ebf \sum_{(k,l) \in \Smc^{N,n}_\alpha} \frac{\zeta_{ki_\alpha} \zeta_{li_\alpha}}{N_{k,\alpha} N_{l,\alpha}} - 1 \doteq \sum_{n=1}^4 \Tmc^{N,n}_\alpha - 1, \label{eq:pf_prep3_key_new}
	\end{align}
	where $\Smc^{N,1}_\alpha$, $\Smc^{N,2}_\alpha$, $\Smc^{N,3}_\alpha$ and $\Smc^{N,4}_\alpha$ are collections of $(k,l) \in \Nalpha \times \Nalpha$ such that $\{ k = l \}$, $\{k \ne l, k=i_\alpha \}$, $\{ k \ne l, l=i_\alpha \}$ and $\{ i_\alpha,k,l \text{ distinct} \}$, respectively.
	For $\Tmc^{N,1}_\alpha$, it follows from \eqref{eq:pf_prep3_exchange_new} that
	\begin{equation} \label{eq:pf_prep3_bound1_new}
		\Tmc^{N,1}_\alpha \doteq \sum_{k \in \Nalpha} \Ebf \frac{\zeta_{ki_\alpha}}{N_{k,\alpha}^2} = \sum_{k \in \Nalpha} \Ebf \frac{\zeta_{i_\alpha k}}{N_{i_\alpha,\alpha}^2} = \Ebf \frac{1}{N_{i_\alpha,\alpha}} \le \frac{1}{N_\alpha p_{\alpha\alpha,N}},
	\end{equation}
	where the inequality follows from Lemma \ref{lem:prep_1}.
	For $\Tmc^{N,2}_\alpha$, using independence of $N_{i_\alpha,\alpha} - \zeta_{li_\alpha}$ and $N_{l,\alpha} - \zeta_{li_\alpha}$ for different $i_\alpha, l \in \Nalpha$, we have
	\begin{align}
		\Tmc^{N,2}_\alpha & = \sum_{l \in \Nalpha, l \ne i_\alpha} \Ebf \frac{\zeta_{li_\alpha}}{N_{i_\alpha,\alpha} N_{l,\alpha}} = \sum_{l \in \Nalpha, l \ne i_\alpha} p_{\alpha\alpha,N} \Ebf \frac{1}{(N_{i_\alpha,\alpha} + 1 - \zeta_{li_\alpha})(N_{l,\alpha} + 1 - \zeta_{li_\alpha})} \notag \\
		& = \sum_{l \in \Nalpha, l \ne i_\alpha} p_{\alpha\alpha,N} \Ebf \frac{1}{N_{i_\alpha,\alpha} + 1 - \zeta_{li_\alpha}} \Ebf \frac{1}{N_{l,\alpha} + 1 - \zeta_{li_\alpha}} \notag \\
		& \le (N_\alpha - 1) p_{\alpha\alpha,N} \frac{1}{(N_\alpha p_{\alpha\alpha,N})^2} \le \frac{1}{N_\alpha p_{\alpha\alpha,N}}, \label{eq:pf_prep3_bound2_new}
	\end{align}
	where the first inequality again follows from Lemma \ref{lem:prep_1}.
	Similarly for $\Tmc^{N,3}_\alpha$, we have
	\begin{equation} 
		\label{eq:pf_prep3_bound3_new}
		\Tmc^{N,3}_\alpha = \sum_{k \in \Nalpha, k \ne i_\alpha} \Ebf \frac{\zeta_{ki_\alpha}}{N_{i_\alpha,\alpha} N_{k,\alpha}} \le \frac{1}{N_\alpha p_{\alpha\alpha,N}}.
	\end{equation}
	Finally for $\Tmc^{N,4}_\alpha$, using the independence of $N_{k,\alpha}-\zeta_{ki_\alpha}-\zeta_{kl}$ and $N_{l,\alpha}-\zeta_{li_\alpha}-\zeta_{kl}$ for distinct $i_\alpha,k,l \in \Nalpha$, we have
	\begin{align}
		\Tmc^{N,4}_\alpha & = \sum_{(k,l) \in \Smc^{N,4}_\alpha} \Ebf \frac{\zeta_{ki_\alpha} \zeta_{li_\alpha}}{N_{k,\alpha} N_{l,\alpha}} = \sum_{(k,l) \in \Smc^{N,4}_\alpha} p_{\alpha\alpha,N}^2 \Ebf \frac{1}{(N_{k,\alpha}+1-\zeta_{ki_\alpha})(N_{l,\alpha}+1-\zeta_{li_\alpha})} \notag \\
		& \le \sum_{(k,l) \in \Smc^{N,4}_\alpha} p_{\alpha\alpha,N}^2 \Ebf \frac{1}{(N_{k,\alpha}+1-\zeta_{ki_\alpha}-\zeta_{kl})(N_{l,\alpha}+1-\zeta_{li_\alpha}-\zeta_{kl})} \notag \\
		& = \sum_{(k,l) \in \Smc^{N,4}_\alpha} p_{\alpha\alpha,N}^2 \Ebf \frac{1}{N_{k,\alpha}+1-\zeta_{ki_\alpha}-\zeta_{kl}} \Ebf \frac{1}{N_{l,\alpha}+1-\zeta_{li_\alpha}-\zeta_{kl}} \notag \\
		& \le (N_\alpha-1)(N_\alpha-2) p_{\alpha\alpha,N}^2 \left( \frac{1}{(N_\alpha-1) p_{\alpha\alpha,N}} \right)^2 \le 1, \label{eq:pf_prep3_bound4_new}
	\end{align}
	where the second inequality once more follows from Lemma \ref{lem:prep_1}.
	Plugging \eqref{eq:pf_prep3_bound1_new}--\eqref{eq:pf_prep3_bound4_new} into \eqref{eq:pf_prep3_key_new} completes the proof.
	\qed

\subsection{Proof of Lemma \ref{lem:prep_4}}

	First note that the result holds trivially when $p_N = 0$ or $p_N = 1$.
	Now consider the case $p_N \in (0,1)$.
	For $t \ge 0$, it follows from Hoeffding's inequality that
	\begin{equation*}
		\Pbd \left( |Y - Np_N| > t + 1 \right) \le \Pbd \left( \left| \sum_{i=2}^N (\zeta_i - p_N) \right| > t \right) \le 2 e^{- \frac{2t^2}{N-1}}.
	\end{equation*}
	Taking $t = C_N(k)$ completes the proof.
	\qed

\section{A lemma on integral operators} \label{sec:restating}

Let $\Smb$ be a Polish space and $\nu \in \Pmc(\Smb)$.
Let $a(\cdot,\cdot) \in L^2(\nu \otimes \nu)$ and denote by $A$ the integral operator on $L^2(\nu)$ associated with $a$: $A \phi(x) \doteq \int_\Smb a(x,y) \phi(y) \, \nu(dy)$ for $x \in \Smb$ and $\phi \in L^2(\nu)$.
Then $A$ is a Hilbert-Schmidt operator.
Also, $AA^*$, and for $n \ge 2$, $A^n$, are trace class operators.
The following lemma is taken from  Shiga-Tanaka \cite{ShigaTanaka1985}.

\begin{lemma} \label{lem:Shiga_Tanaka}
	Suppose that \textnormal{Trace}$(A^n)=0$ for all $n \ge 2$.
	Then $\Ebf [e^{\frac{1}{2} I_2(f)}] = e^{\frac{1}{2} \textnormal{Trace} (AA^*)}$, where $f(x,y) \doteq a(x,y) + a(y,x) - \int_\Smb a(x,z) a(y,z) \, \nu(dz)$, and $I_2(\cdot)$ is the MWI defined as in Section \ref{sec:asymp_symmetric_statistics}.
	Moreover, $I-A$ is invertible and for any $\phi \in L^2(\nu)$, 
	$$\Ebf \left[\exp(iI_1(\phi) + \frac{1}{2} I_2(f))\right] = \exp\left\{-\frac{1}{2} (\| (I-A)^{-1} \phi \|^2_{L^2(\nu)} - \textnormal{Trace} (AA^*))\right\},$$
	 where $I$ is the identity operator on $L^2(\nu)$.
\end{lemma}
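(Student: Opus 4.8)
The plan is to reduce both identities to a computation with countably many i.i.d.\ standard Gaussians, using the calculus of Carleman--Fredholm (regularized) determinants $\det\nolimits_2$ of Hilbert--Schmidt operators. The algebraic backbone is the factorization
\begin{equation*}
	I - B \;=\; (I-A)(I-A^*),
\end{equation*}
where $B$ denotes the integral operator on $L^2(\nu)$ with kernel $f$. This is immediate from $B = A + A^* - AA^*$, since the kernels $a(x,y)$, $a(y,x)$ and $\int_\Smb a(x,z)a(y,z)\,\nu(dz)$ are those of $A$, $A^*$ and $AA^*$ respectively. In particular $I-B=(I-A)(I-A)^*$ is a nonnegative self-adjoint operator and $B$ is self-adjoint Hilbert--Schmidt.

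First I would settle invertibility of $I-A$ and the value of the determinants $\det\nolimits_2(I-A)$, $\det\nolimits_2(I-A^*)$. Consider the entire function $z\mapsto\det\nolimits_2(I-zA)$. For $|z|$ small,
\begin{equation*}
	\frac{d}{dz}\log\det\nolimits_2(I-zA) \;=\; -z\,\textnormal{Trace}\big(A^2(I-zA)^{-1}\big) \;=\; -z\sum_{n\ge 0}z^n\,\textnormal{Trace}(A^{n+2}) \;=\; 0
\end{equation*}
by the hypothesis $\textnormal{Trace}(A^n)=0$ for $n\ge 2$; hence $\det\nolimits_2(I-zA)\equiv 1$ by analyticity. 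Since, for a Hilbert--Schmidt operator, the zeros of $z\mapsto\det\nolimits_2(I-zA)$ are exactly the reciprocals of the nonzero eigenvalues of $A$, the operator $A$ has no nonzero eigenvalue; thus $1\notin\sigma(A)$, so $I-A$ and $I-A^*$ are invertible, and $\det\nolimits_2(I-A)=1$. The same argument for $A^*$ (using $\textnormal{Trace}((A^*)^n)=\textnormal{Trace}(A^n)=0$) gives $\det\nolimits_2(I-A^*)=1$. Combined with the multiplicativity rule $\det\nolimits_2\big((I-A)(I-A^*)\big)=\det\nolimits_2(I-A)\,\det\nolimits_2(I-A^*)\,e^{-\textnormal{Trace}(AA^*)}$, this yields $\det\nolimits_2(I-B)=e^{-\textnormal{Trace}(AA^*)}$. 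Finally $I-B\ge 0$ is invertible and a compact perturbation of $I$, so its spectrum lies in $[\delta,\infty)$ for some $\delta>0$; hence the eigenvalues $\mu_k$ of $B$ satisfy $\sup_k\mu_k\le 1-\delta<1$.

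Next I would diagonalize $B$. Let $\{v_k\}$ be an orthonormal eigenbasis of $B$ in $L^2(\nu)$, so that $f=\sum_k\mu_k\,v_k\otimes v_k$ in $L^2(\nu\otimes\nu)$ and, with $\zeta_k\doteq I_1(v_k)$ i.i.d.\ standard normal, $I_2(f)=\sum_k\mu_k(\zeta_k^2-1)$ in $L^2$; here one uses $I_2(v_k\otimes v_k)=I_1(v_k)^2-\|v_k\|_{L^2(\nu)}^2$ together with the isometry $\Ebf(I_2(g))^2=2\|g\|^2_{L^2(\nu\otimes\nu)}$ to pass to the limit. Expanding $\phi=\sum_k\phi_k v_k$, using independence across $k$, the one-dimensional Gaussian identity $\Ebf\exp\big(it\zeta+\tfrac12 r(\zeta^2-1)\big)=(1-r)^{-1/2}e^{-r/2}\exp\big(-\tfrac{t^2}{2(1-r)}\big)$ for $r<1$, and a dominated-convergence justification of the product (via $\sup_k\mu_k<1$, $\sum_k\mu_k^2<\infty$ and $\sum_k\phi_k^2<\infty$), one obtains
\begin{align*}
	\Ebf\Big[\exp\big(iI_1(\phi)+\tfrac12 I_2(f)\big)\Big]
	&= \prod_k (1-\mu_k)^{-1/2}e^{-\mu_k/2}\exp\Big(-\frac{\phi_k^2}{2(1-\mu_k)}\Big) \\
	&= \det\nolimits_2(I-B)^{-1/2}\exp\Big(-\tfrac12\big\langle\phi,(I-B)^{-1}\phi\big\rangle\Big).
\end{align*}
Since $(I-B)^{-1}=(I-A^*)^{-1}(I-A)^{-1}=\big((I-A)^{-1}\big)^*(I-A)^{-1}$, we get $\big\langle\phi,(I-B)^{-1}\phi\big\rangle=\|(I-A)^{-1}\phi\|_{L^2(\nu)}^2$, while $\det\nolimits_2(I-B)^{-1/2}=e^{\textnormal{Trace}(AA^*)/2}$ by the previous step. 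Taking $\phi=0$ recovers $\Ebf[e^{\frac12 I_2(f)}]=e^{\textnormal{Trace}(AA^*)/2}$, and the general $\phi$ gives the second displayed identity.

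The step most in need of care is the determinant bookkeeping in the non-trace-class regime: $B$ (equivalently $A+A^*$) need not be trace class, so the ordinary Fredholm determinant is unavailable and one must work throughout with the regularized $\det\nolimits_2$, quoting its analyticity in $z$, the description of its zero set, and the multiplicativity formula with the correction factor $e^{-\textnormal{Trace}(AA^*)}$ --- all standard (e.g.\ from Simon's monograph on trace ideals) but requiring care. A secondary point is the rigorous passage to the chaos expansion $I_2(f)=\sum_k\mu_k(\zeta_k^2-1)$ and the interchange of the infinite product with the expectation; both follow from the $L^2$-isometry of the multiple Wiener integrals recalled in Section \ref{sec:asymp_symmetric_statistics} and the uniform spectral gap $\sup_k\mu_k\le 1-\delta$.
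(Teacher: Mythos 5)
Two preliminary observations: the paper does not actually prove this lemma --- it is imported from Shiga--Tanaka --- so there is no in-paper argument to compare against line by line; and your argument is correct as written. The route you take is the classical one and, to my knowledge, is essentially the one used by Shiga and Tanaka: identify the kernel operator $B$ of $f$ as $A+A^*-AA^*$, so $I-B=(I-A)(I-A^*)$; use the trace hypothesis together with the logarithmic derivative formula $\frac{d}{dz}\log\det\nolimits_2(I-zA)=-z\,\textnormal{Trace}(A^2(I-zA)^{-1})$ (noting $\sum_{m\ge0}|z|^m\|A^{m+2}\|_1\le\|A\|_2^2\sum_m(|z|\|A\|)^m<\infty$ for small $|z|$ to justify the term-by-term trace) to get $\det\nolimits_2(I-zA)\equiv1$, whence $A$ has no nonzero eigenvalues, $I-A$ is invertible, and the multiplicativity formula gives $\det\nolimits_2(I-B)=e^{-\textnormal{Trace}(AA^*)}$; diagonalize $B$, reduce to independent one-dimensional Gaussian computations, and reassemble. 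The two technical points you flag at the end are indeed the ones that need care: the operator $B$ is only Hilbert--Schmidt (not trace class), so all determinant manipulations must go through $\det\nolimits_2$ and its correction factor, and the interchange of the infinite product with the expectation requires the uniform spectral bound $\sup_k\mu_k\le1-\delta<1$ to get uniform integrability of the partial products (e.g.\ by exhibiting a uniformly bounded $q$-th moment for some $q\in(1,1/(1-\delta))$). The only cosmetic remark is that you should state explicitly that $\{v_k\}$ is a \emph{complete} orthonormal basis of $L^2(\nu)$ consisting of eigenvectors of $B$ (allowing $\mu_k=0$), so that the expansion $\phi=\sum_k\phi_k v_k$ is available; this is of course automatic by the spectral theorem for compact self-adjoint operators, but it is the bridge between the $f$-expansion (which only involves the nonzero eigenvalues) and the $\phi$-expansion (which generally has a component in $\ker B$, contributing the factor $e^{-\phi_k^2/2}$ with $\mu_k=0$).
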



%

\end{document}